\documentclass[10pt,reqno,a4paper]{article} 

\usepackage{anysize}
\marginsize{3.0cm}{3.0cm}{2.2cm}{2.2cm}

\usepackage[english]{babel}
\usepackage[centertags]{amsmath}
\usepackage{amsfonts,amssymb,amsthm}
\usepackage[svgnames]{xcolor}
\usepackage{comment}
\usepackage{hyperref} 
%\hypersetup{colorlinks=true, linkcolor=red, citecolor=green, filecolor=magenta, urlcolor=cyan}
%\usepackage[notcite,notref]{showkeys} %[notcite,notref,color]
%\definecolor{labelkey}{rgb}{0,0,1} % {gray}{.75}
\usepackage{mathrsfs}
\usepackage[sans]{dsfont}
\usepackage{accents}

% Bibliografia compatta (poco spazio verticale tra i bibitems)
\let\OLDthebibliography\thebibliography
\renewcommand\thebibliography[1]{
  \OLDthebibliography{#1}
  \setlength{\parskip}{0pt}
  \setlength{\itemsep}{0pt plus 0.3ex} }

\numberwithin{equation}{section}

\setcounter{tocdepth}{2}

\theoremstyle{plain}
\newtheorem{theorem}{Theorem}[section]

\newtheorem{lemma}[theorem]{Lemma}

\theoremstyle{definition}

\newenvironment{remark}{\pushQED{\qed} \remarkbase}{\popQED\endremarkbase}

%\DeclareMathOperator{\sign}{sign}
%\DeclareMathOperator{\Re}{Re}

% Number sets
\newcommand{\N}{{\mathbb N}}
\newcommand{\R}{{\mathbb R}}
\newcommand{\C}{{\mathbb C}}
\newcommand{\Z}{\mathbb Z}

\newcommand{\T}{{\mathbb T}}
\renewcommand{\S}{{\mathbb S}}

% Calligraphic

\newcommand{\mD}{\mathcal{D}}
\newcommand{\mE}{\mathcal{E}}
\newcommand{\mF}{\mathcal{F}}

\newcommand{\mH}{\mathcal{H}}
\newcommand{\mI}{\mathcal{I}}

\newcommand{\mK}{\mathcal{K}}
\newcommand{\mL}{\mathcal{L}}
\newcommand{\mM}{\mathcal{M}}

\newcommand{\mR}{\mathcal{R}}
\newcommand{\mS}{\mathcal{S}}
\newcommand{\mT}{\mathcal{T}}
\newcommand{\mU}{\mathcal{U}}

% Greek
\renewcommand{\a}{\alpha}
\renewcommand{\b}{\beta}
\newcommand{\g}{\gamma}

\newcommand{\e}{\varepsilon}
\newcommand{\ph}{\varphi}
\newcommand{\lm}{\lambda}
\newcommand{\Lm}{\Lambda}

\newcommand{\Om}{\Omega}
\newcommand{\om}{\omega}

\newcommand{\s}{\sigma}

\renewcommand{\th}{\vartheta}

% Gradient

% Mix
\newcommand{\la}{\langle}
\newcommand{\ra}{\rangle}
\newcommand{\pa}{\partial}
\renewcommand{\div}{\mathrm{div}\,}
\newcommand{\grad}{\nabla}

% Salvare gli erroretti:

% TESTO COLORATO:
%\newcommand{\bcr}{\begin{color}{red}}
%\newcommand{\ec}{\end{color}}
%\newcommand{\bcb}{\begin{color}{blue}}
%\newcommand{\bcg}{\begin{color}{green}}

%\newcommand{\Hor}{H\"ormander}

 % short name

 % {\mathrm{RAD}} % radial direction
  % {\mathrm{TAN}} % tangential direction
 % {\mathrm{VER}} % vertical direction

%% Greater Common Divisor:
% the latex command \gcd already exists. 

%\newcommand{\gcd}{\textsc{gcd}} % or: {\operatorname{GCD}} 
%\DeclareMathOperator{\GCD}{gcd}  % or: {GCD}.  ``gcd'' with small letters is more common notation. 

\title{Two-dimensional capillary liquid drop: Craig-Sulem formulation on $\T^1$ and bifurcations from multiple eigenvalues of rotating waves}
 
\author{\normalsize{Giuseppe La Scala}}
\date{} 

\pagestyle{plain}

\begin{document}

\maketitle

\noindent
\textbf{Abstract.} We consider the free boundary problem for a two-dimensional, incompressible, perfect, irrotational liquid drop of nearly circular shape with capillarity: that is, we consider the 2D version of the 3D capillary drop problem treated in Baldi-Julin-La Manna \cite{B.J.LM} and Baldi-La Manna-La Scala \cite{BLS}. In particular, we derive its Craig-Sulem formulation firstly over the circle, then over the one-dimensional flat torus; the arising equations are similar to the pure capillary Water Waves for the ocean problem, apart from conformal factors and additional terms due to curvature terms. Then,
we show its Hamiltonian structure and we derive constants of motions from symmetries, one of which is the invariance by the torus action $\mT_\alpha$. Thanks to this invariance, we show the existence of $\mT_\alpha-$orbits of rotating wave solutions (which are the analogous of travelling waves of the ocean problem) by bifurcation from multiple eigenvalues in the spirit of Moser-Weinstein \cite{Moser, Weinstein} and Craig-Nicholls \cite{Craig.Nicholls} variational approaches; in particular, we can parametrize such orbits by the angular momentum, and for each value of it they are unique. This will imply that each orbit is generated by symmetric rotating waves.

\bigskip

\emph{MSC 2020:} 35R35, 35B32, 35C07, 76B45, 35B38.

\tableofcontents

\section{Introduction}

We consider the free boundary problem for the motion of a pure capillary 2D drop, described for some time interval $t\in(0,T)$ by the equations
\begin{align}
&\div u 
 = 0 \quad \qquad\qquad\,\,\,\,\,\quad\text{in} \ \Om_t,\label{div.eq.01}
\\ 
 &\pa_t u + u \cdot \grad u + \grad p 
 = 0 \quad \text{in} \ \Om_t, 
\label{dyn.eq.01} 
\\
&
p  = \sigma_0 H_{\Om_t} \quad\qquad\qquad\quad \text{on} \ \pa \Om_t, 
\label{pressure.eq.01}
\\
&
V_t  = \la u , \nu_{\Om_t} \ra \quad\qquad\quad\,\,\,\,\, \ \text{on} \ \pa\Om_t.
\label{kin.eq.01}
\end{align}
Here, $u$ is the velocity vector field, $p$ is the pressure scalar field, $\s_0$ is the capillarity coefficient, $H_{\Om_t}$ is the curvature of $\pa\Om_t$, $V_t$ is the normal boundary velocity, $\nu_{t}$ is the outward unit normal vector field of $\pa\Om_t$, and finally $\Om_t$ is the geometrical domain of the drop which we assume to be smooth, simply connected and having boundary
\begin{equation}\label{ansatz}\pa\Om_t:=\{(1+h(t,x))x\colon\,x\in\S^1\},\qquad\S^1:=\{x\in\R^2\colon\,|x|=1\},
\end{equation}
where $h$ is the elevation function which satisfies $1+h>0$. If $|h|$ is small, we say that $\pa\Om_t$ has a \emph{nearly circular} shape.

The set of equations \eqref{div.eq.01}-\eqref{kin.eq.01} is given by the Euler equations \eqref{div.eq.01}-\eqref{dyn.eq.01} for incompressible perfect fluids with no external forces acting on the drop, the capillary boundary condition \eqref{pressure.eq.01} for the pressure and the boundary velocity condition \eqref{kin.eq.01}. The equations \eqref{div.eq.01}-\eqref{kin.eq.01} together with the ansatz \eqref{ansatz} are the 2D version of the 3D capillary drop equations treated in Baldi-Julin-La Manna \cite{B.J.LM} and Baldi-La Manna-La Scala \cite{BLS}, and in the present paper we want to get similar results with a similar approach.

The unknown fields are $u$, $p$ and $\Om_t$. If we assume $u$ to be also irrotational, then there exists a scalar function $\Phi$, which is called the \emph{velocity potential}, which solves the Dirichlet problem
\begin{equation}\label{Laplac.problem}\begin{aligned}&\Delta\Phi(t,x)=0\qquad\qquad\qquad\qquad\quad\,\,\,\,\text{in}\,\Om_t,
\\&\Phi(t,(1+h(t,x))x)=\psi(t,x)\qquad\,\,\,\,\text{on}\,\S^2.
\end{aligned}\end{equation}
By this, \eqref{kin.eq.01} is equivalent to $V_t=\la\grad\Phi,\nu_{t}\ra=:G(h)\psi$: here, we call $G(h)\psi$ the Dirichlet-Neumann operator. Moreover, since external forces on the fluid come from a potential (they are zero in this case), then the Bernoulli identity $\pa_t\Phi + \frac12|\grad\Phi|^2 + p = c(t)$ in $\Om_t$, with $c(t)$ a pure function of time. We choose $c(t):=\sigma_0$ and write $\pa_t\Phi + \frac12|\grad\Phi|^2 + p = \sigma_0$ and extend its validity up to $\pa\Om_t$ by continuity. We notice that this choice of $c(t)$ is fine; one could else consider for the velocity potential $\Phi$ the equivalence relation $\Phi_1\sim\Phi_2\iff\Phi_1-\Phi_2$ is a function of time.

One is then led to get the Craig-Sulem (equivalent) formulation of the system \eqref{div.eq.01}-\eqref{kin.eq.01}, see Theorem \ref{thm:ww.eq.}:
\begin{align}& \pa_t h = \frac{\sqrt{(1+h)^2 + |\grad_{\S^1} h|^2}}{1 + h} \, G(h)\psi,
\label{h.eq.}
\\
& \pa_t \psi =
 \frac12 \Big( G(h)\psi 
+ \frac{\la \grad_{\S^1} \psi , \grad_{\S^1} h \ra}{(1+h) \sqrt{(1+h)^2 + |\grad_{\S^1} h|^2}} \Big)^2
- \frac{|\grad_{\S^1} \psi|^2}{2(1+h)^2} 
  - \s_0 (H(h)-1),
  \label{psi.eq.}
\end{align}
In the case of 3D nearly spherical capillary drop, similar equations have been derived also in \cite{B.J.LM} and \cite{Shao.initial.notes}. However, in Theorem \ref{thm:ww.on.torus}, we will prove that the system \eqref{h.eq.}-\eqref{psi.eq.}, which is written on $\S^1$, can be also written on the torus $\T^1$ as follows:
\begin{align}&\pa_t\xi=e^{-2\xi}[G(\xi)\chi],\label{xi.eq}
\\&\pa_t\chi=e^{-2\xi}\Big[\frac12\Big(\frac{ G(\xi)\chi + \xi'\chi'}{\sqrt{1+\xi'^2}}\Big)^2 - \frac12\chi'^2 + \s_0 e^{\xi}\Big(\frac{\xi'}{\sqrt{1+\xi'^2}}\Big)'\Big] - \s_0\Big[e^{-\xi}\frac{1}{\sqrt{1+\xi'^2}} - 1\Big], \label{chi.eq}
\end{align}
where now $G$ must be meant as the Dirichlet-Neumann operator for the infinite-depth water waves. As proved in Theorem \ref{thm:Hamiltonian.structure}, these equations have a Hamiltonian structure, and the Hamiltonian is given respectively by the kinetic energy, the capillary energy and a volume term (which follows by the choice of $c(t)=\s_0$ in the Bernoulli identity):
\begin{equation}\label{Hamiltonian}\mathcal{H}(\xi,\chi):=\frac12\int_{\T^1}\chi\cdot G(\xi)\chi\,d\s + \s_0\int_{\T^1}e^\xi\sqrt{1+\xi'^2}d\s - \frac12\s_0\int_{\T^1}e^{2\xi}\,d\s.\end{equation}
From this, one gets constants of motion from symmetries; one of them is the conservation of the angular momentum
\begin{equation}\label{def.mI}\mI(\xi,\chi):=-\frac12\int_{\T^1}e^{2\xi}\chi'\,d\th=\int_{\T^1}e^{2\xi}\xi'\chi\,d\th,\end{equation}
coming from the invariance of $\mH$ with respect to the torus action $(\xi(\th),\chi(\th))\longmapsto\mT_\alpha(\xi(\th),\chi(\th))=(\xi(\th+\alpha),\chi(\th+\alpha))$, see Lemma \ref{lemma:rotation}.

The system \eqref{xi.eq}-\eqref{chi.eq} is similar to pure capillary Water Wave equations on the torus $\T^1$, with the substantial differences due to the factor $e^{-2\xi}$ and the additive term $\s_0[e^{-\xi}(\sqrt{1+\xi'^2})^{-1} + 1]$ in the equation \eqref{chi.eq}. This difference is due to the fact that to parametrize geometric tensors for $\S^1$ by using $\T^1$, we need the conformal transformation from the periodic strip $\mS:=\T^1\times\R$ to $\R^2\setminus\{0\}$ considered in \eqref{phi.map} (see Section 1.1), which introduces curvature terms when passing from the curved manifold $\S^1$ to the flat one $\T^1$, see Lemma \ref{derivative.change.strip} and also the curvature expression \eqref{mean.curvature}.

This allows us to look for solutions of the kind $(\xi(t,\th),\chi(t,\th)):=(\eta(\th+\om t),\beta(\th+\om t))$, which we will call \emph{rotating waves}. They correspond to drop space profiles which rigidly rotate in time with constant angular velocity $\om\in\R$: indeed, on $\S^1$ they read as functions of the kind $(\tilde\eta(R(\om t)x), \tilde\beta(R(\om t)x))$, where
\begin{equation*}R(\th):=\begin{pmatrix}\cos\th & -\sin\th \\ \sin\th & \cos\th
\end{pmatrix}.
\end{equation*}
Rotating waves are in particular the nonzero solutions of the equations
\begin{align}&\om\eta' 
= e^{-2\eta}G(\eta) \beta,
\label{rw.1} 
\\&
\om\beta' = e^{-2\eta}\frac12 \Big( \frac{G(\eta) \beta + \eta'\beta'}{ 
\sqrt{1+\eta'^2}} \Big)^2
- \frac12 e^{-2\eta}\beta'^2 
+ \s_0\Big(e^{-\eta}\Big[\frac{\eta'}{\sqrt{1+\eta'^2}} \Big]' - \frac{e^{-\eta}}{\sqrt{1+\eta'^2}} + 1  \Big),
\label{rw.2} 
\end{align}
which can be seen as the critical point equations for the functional 
\begin{equation}\label{lagrangian.functional}\mE:=\mH - \om(\mI-a)
\end{equation}
under the constraint $\{\mI(\eta,\beta)=a\}$, with $a\in\R$ fixed, see Lemma \ref{lemma:variational.structure.rw}.

In \cite{Moon.Wu}, it has been proved the existence of rotating waves for the Zakharov formulation of the system \eqref{div.eq.01}-\eqref{kin.eq.01} when they are required to satisfy \emph{both} the reversibility symmetry $(\eta(\th),\beta(\th))=(\eta(-\th),-\beta(-\th))$ and the $c-$fold symmetry $(\eta(\th+\frac{2\pi}{c}),\beta(\th+\frac{2\pi}{c}))=(\eta(\th),\beta(\th))$, where $c\in\N$ and $c\ge2$. Physically and geometrically speaking, in a generic $Oxy$ reference frame the reversibility symmetry corresponds to drops whose domain is invariant under the reflection about the $x-$axis, while the $c-$fold symmetry corresponds to drops whose domain is invariant under the rotation of angle $\frac{2\pi}{c}$. The existence of this kind of solutions have been proved by conformal transformation of the complex plane, starting from the Zakharov formulation of \eqref{xi.eq}-\eqref{chi.eq}. In terms of our Craig-Sulem formulation, their existence is due to the fact that they arise as bifurcations from a simple eigenvalue around $(\om_*;0,0)$, where $\om_*$ is given by \eqref{freq.c.fold} and $(\eta,\beta)=(0,0)$ is the \emph{static circle} solution: this is proved in Theorem \ref{thm:bif}, point $(iii)$, see Section 4.4.

If we throw away the reversibility symmetry assumption, in the analysis led in Section 4.2 we find out that the kernel space $V$ (see \eqref{kernel.L.}) is $3-$dimensional and not $1-$dimensional: in the former case, the reduction of $V$ to a $1-$dimensional subspace is due to the fact that reversibility assumption cancels out two generators of $V$. Then, one is prevented from using Crandall-Rabinowitz Theorem.
However, equations \eqref{rw.1}-\eqref{rw.2} have two useful properties: one is its variational structure as pointed out before, and the other one is its equivariance with the respect to the torus action, see Lemma \ref{lemma:F.invariance}, identity \eqref{F.equiv.}. Then, in the spirit of \cite{Weinstein, Moser, Craig.Nicholls}, one is able to get a \emph{unique} $\mT_\alpha$-orbit of rotating waves: this is proved in Theorem \ref{thm:main}, where $c-$fold symmetry is \emph{not} assumed, and in Theorem \ref{thm:bif}, point $(ii)$, where $c-$fold symmetry is assumed.

Finally, in Theorem \ref{thm:bif}, point $(i)$, we show the existence of rotating waves satisfying the reversibility symmetry but not the $c-$fold one. 

This implies the following consequence. When one does not make the reversibility symmetry assumption, one finds a unique family of $\mT_\alpha-$orbit of rotating waves parametrized by the angular momentum. Fixing a value of it, one can also find a rotating wave having the reversibility symmetry: by uniqueness of the $\mT_\alpha-$orbit, one discovers that it \emph{must} be spanned by the solution with such a symmetry.

\bigskip

\emph{Structure of the paper.}
In Section 2, we will firstly introduce the geometric objects we will deal with. Starting from a generic setting built on $1$-dimensional $C^2-$manifolds embedded in $\R^2$, we will specialize on the circle $\S^1$ and then on graph manifolds over it. Then we will parametrize the latter manifolds on the torus $\T^1$ and we will express over it all the geometrical objects. Finally, we will recall the Sobolev spaces $H^s$ and the space $H^{\mathfrak{s},s}$ of analytic functions with exponential decay on $\T^1$.

In Section 3, we will derive in Theorem \ref{thm:ww.eq.} the Craig-Sulem formulation \eqref{h.eq.}-\eqref{psi.eq.} of the free boundary problem \eqref{div.eq.01}-\eqref{kin.eq.01}, and then in Theorem \ref{thm:ww.on.torus} we will write its formulation \eqref{xi.eq}-\eqref{chi.eq} on the torus $\T^1$. As for the latter, in Theorem \ref{thm:Hamiltonian.structure} we will show its Hamiltonian structure and then in Lemma \ref{lemma:shift.mass} we will derive the conservation of the drop mass from the invariance with respect to the translations of $\chi$ with constants, while in Lemma \ref{lemma:rotation} we will derive the conservation of the angular momentum from the invariance with respect to the torus action. In the end, we show in Lemma \ref{lemma:reversibility} the reversible structure of \eqref{xi.eq}-\eqref{chi.eq}.

In Section 4, in Lemma \ref{ww.rotating.wave.eq} we will derive the rotating wave equations \eqref{rw.1}-\eqref{rw.2}, whose variational nature is proved in Lemma \ref{lemma:variational.structure.rw}. The Sections 4.2, 4.3 are devoted to prove the Theorem \ref{thm:main} about the existence of rotating waves with no symmetry and high finite as well as analytic regularity, while Section 4.4 we show Theorem \ref{thm:bif} which concernes the existence of rotating waves having only the reversible structure, then those having only the $c-$fold symmetry and in the end those having both symmetries, which is nothing less but what partially already proved in \cite{Moon.Wu}.

\bigskip

\emph{Related literature about the drop in 2D and 3D.} 
The problem of the fluid motion of a capillary drops
dates back to the paper \cite{Lord.Rayleigh.jets} by Lord Rayleigh, who studied both nearly-circular 2D drops for understanding capillary jets as well as nearly-spherical 3D drops, and formally discovered rotating waves. 
The formulation of the free boundary problem for the drop in the irrotational regime 
as a system of equations on $\S^1$ has been used in \cite{Beyer.Gunther} and for $\S^2$ 
in \cite{B.J.LM, Beyer.Gunther, Shao.initial.notes, Shao.G.paralinearization}.
For the latter case, its Hamiltonian structure has been obtained in \cite{Beyer.Gunther} 
and, in the present formulation, in \cite{B.J.LM}. 
The Dirichlet-Neumann operator for the 3D drops 
has been studied in \cite{Shao.G.paralinearization} (paralinearization)
and \cite{B.J.LM} (linearization, analyticity, tame estimates). 
Local well-posedness results for the Cauchy problem have been obtained in 
\cite{Beyer.Gunther, CS, Shao.G.paralinearization}, 
and continuation results and a priori estimates in \cite{JL, Shatah.Zeng}. The rigorous existence of rotating traveling waves has been proved in \cite{BLS}.
As for the 2D case, in \cite{Moon.Wu} it was proved the existence of rotating waves under reversibility and $c-$fold symmetry, 
while their numerical evidence is shown in \cite{Dyachenko}; 
we mention also \cite{MNS} for the existence of 2D steady bubbles for capillary drops and \cite{MRS} for the existence of 2D steady vortex sheets.
 
\medskip

\emph{Related literature about the bifurcation and critical point theory.}
Concerning the bifurcation technique, the present paper is inspired by \cite{Craig.Nicholls}, 
where the existence of capillary-gravity traveling waves on 2D and 3D flat torus is proved, 
relying on the proof of the Lyapunov Centre Theorem in \cite{Weinstein} and \cite{Moser}. 
We refer to \cite{Benci, Chow.Hale, Mahwin.Willem, Struwe} as far as the Benci index 
and its applications in proving multiplicity of critical points are concerned. 
We refer to \cite{Ambrosetti.Malchiodi, Ambrosetti.Prodi, Ambrosetti.Rabinowitz, 
Buffoni.Toland, Chow.Hale, Crandall.Rabinowitz, Fadell.Rabinowitz, Mahwin.Willem, Struwe}
for further literature about bifurcation and critical point theories,  
and to \cite{Berti.Bolle, Berti.Bolle.1, BLS} for some interesting applications to PDEs. 

\medskip

\emph{Related literature about traveling water waves.} 
Starting from the pioneering works \cite{Stokes}, \cite{Struik}, \cite{Levi.Civita} and \cite{Nekrasov},  
the literature about the water wave problem in the flat (i.e., non spherical) case, 
and in particular on traveling waves, is huge. 
For a recent review, we refer to \cite{HHSTWWW}.
Concerning the Hamiltonian structure and the Dirichlet-Neumann operator, 
important references are \cite{AM, CSS, Craig.Sulem, Lannes, Zakharov}; 
see also \cite{BMV, Wahlen.1} for recent developments.
For the Cauchy problem we refer, for instance, to \cite{ABZ, AD, Berti.Delort, Lannes}. 
For traveling waves we mention, e.g., 
\cite{Groves.Sun, Iooss.Plotnikov, Iooss.Plotnikov.1} for 3D problems,
\cite{ASW} for global bifurcation, 
\cite{GNPW} for Beltrami flows, 
\cite{Wahlen} for nonzero vorticity,
\cite{Jones.Toland} for secondary bifurcations.

\bigskip

\emph{Acknowledgements.} This work is supported by Italian Project PRIN 2020XB3EFL \emph{Hamiltonian and dispersive PDEs}.
The author thanks Pietro Baldi and Domenico Angelo La Manna for interesting discussions.

\section{Preliminaries} \label{sec:preliminary}

\subsection{Geometric objects on surfaces and their representation on the infinite periodic strip}

Let us consider an open set $\Om\subset\R^2$ whose boundary is at least $C^2$-regular. Let us call $\nu_\Om$ the outward unit normal vector field of $\pa\Om$: the tangent line at $x\in\pa\Om$ is
\begin{equation}\label{tangent.line}T_x(\pa\Om)=\{y\in\R^2\colon\,\langle y,\nu_\Om(x)\rangle=0\},
\end{equation}
where we are denoting by $\la\cdot,\cdot\ra$ the Euclidean scalar product.

Thus, calling $N_x(\pa\Om)$ the normal line at $x\in\pa\Om$, we have the orthogonal decomposition $\R^2=N_x(\pa\Om)\oplus T_x(\pa\Om)$, so we can define the projection operators $\Pi_{N_x(\pa\Om)}\colon\R^2\longmapsto N_x(\pa\Om)$ and $\Pi_{T_x(\pa\Om)} = I - \Pi_{N_x(\pa\Om)}$, where $I$ is the identity map; one can notice that 
\begin{equation}\label{normal.projection}\Pi_{N_x(\pa\Om)}=\nu_\Om(x)\otimes\nu_\Om(x).
\end{equation}

We can project any vector field $F\colon\R^2\longmapsto\R^2$ over $N_x(\pa\Om)$ and $T_x(\pa\Om)$. In particular, given any function $f\colon\R^2\longmapsto\R$ and its gradient $\nabla f$, we can define the normal and the tangential gradient of $f$ respectively as
\begin{align}&\nabla_\nu f(x):=\Pi_{N_x(\pa\Om)}[\nabla f(x)]=\langle\nabla f(x),\nu_\Om(x)\rangle\nu_\Om(x), \label{normal.gradient}
\\&\nabla_{\pa\Om}f(x):=\Pi_{T_x(\pa\Om)}[\nabla f(x)]=\nabla f(x) - \nabla_\nu f(x). \label{tangential.gradient}
\end{align}
Thus we can define the tangential differential as
\begin{equation}\label{tangential.differential}D_{\pa\Om}f(x):=[\nabla_{\pa\Om}f(x)]^T,
\end{equation}
a definition that can be extended to any smooth vector field $F\colon\R^2\longmapsto\R^2$ as
\begin{equation}\label{extended.tangential.differential}D_{\pa\Om}F(x):=DF(x) - DF(x)\nu_\Om(x)\otimes\nu_\Om(x),
\end{equation}
where $DF(x)$ is the differential (or Jacobian matrix) of $F$ on $\R^2$.

Given any smooth function $f\colon\,\pa\Om\longmapsto\R$ and any extension $\tilde{f}$ of it, one defines
\begin{equation}\grad_{\pa\Om}f(x):=\grad_{\pa\Om}\tilde f(x),
\end{equation}
and one can show that this definition does not depend on the choice of the extension $\tilde{f}$; same holds for vector fields $F\colon\,\pa\Om\longmapsto\R^2$.

Let us now turn our attention to the unit disk $\Om:=\mathbb{D}$, whose boundary is the unit circle $\pa\Om=\S^1$. 
Given any function $f\colon\,\S^1\longmapsto\R$, we define its $0-$homogeneous and $1-$homogeneous extensions to $\R^2\setminus\{0\}$ respectively as
\begin{equation}\label{homogeneous.extensions}\mE_0f(x):=f\Big(\frac{x}{|x|}\Big),\qquad\mE_1f(x):=|x|\mE_0 f(x),\qquad\forall x\in\R^2\setminus\{0\}
\end{equation}
and analogously for any vector field $F\colon\,\S^1\longmapsto\R^2$. By these definitions and by the fact that $\nu_{\mathbb{D}}(x)=x$ for all $x\in\S^1$, we get 
\begin{align}&\nabla_{\S^1}f(x)=\nabla(\mE_0f)(x)= (\grad\mE_0f)(x) - \la(\grad\mE_0f)(x),x\ra x, \label{tang.grad.S1}
\\&D_{\S^1}f(x)=(\grad_{\S^1}f(x))^T.
\end{align}
By definition, for all $x\in\S^1$ we have
\begin{equation}\label{grad.s1.orth.}\la\grad_{\S^1}f(x),x\ra = 0.
\end{equation}
Let us turn our attention to the deformed disk $\Om_h$:
\begin{equation}\label{Om.h}\Om_h:=\{(1+\mE_0h(x))x\,\colon\,x\in\mathbb{D}\},\qquad h\colon\,\S^1\longmapsto(-1,+\infty).
\end{equation}
Then, its boundary is
\begin{equation}\label{pa.Om.h}\pa\Om_h:=\{(1+h(x))x\,\colon\,x\in\mathbb{S}^1\},\qquad h\colon\,\S^1\longmapsto(-1,+\infty),
\end{equation}
which is diffeomorphic to $\S^1$ through the map \begin{equation}\label{gamma.diff}\g_h:\S^1 \longmapsto \pa \Om_h\qquad\g_h(x):=(1+h(x))x,\qquad x\in\S^1.\end{equation}
The 1-homogeneous extension of $\g$ on $\R^2\setminus\{0\}$ is
\begin{equation}  \label{def.tilde.h}
\mE_1 \g_h(x) :=  x \, (1 + \mE_0 h(x)) ,\qquad\forall x\in\R^2\setminus\{0\}.
\end{equation}
The Jacobian matrix, its determinant, its inverse and transpose are
\begin{align}&D (\mE_1 \g_h)  = (1 + \mE_0 h) I + x \otimes \nabla \mE_0 h, \label{Jac.tilde.gamma}
\\&\det D(\mE_1 \g_h) = (1 + \mE_0 h)^2, \label{det.D.tilde.gamma}
\\&[D (\mE_1 \g_h)]^{-1} = \frac{I}{1+ \mE_0 h}  
- \frac{x \otimes \nabla \mE_0 h}{(1 + \mE_0 h)^2}, \label{D.tilde.gamma.inv}
\\&[D (\mE_1 \g_h)]^{-T} 
= \frac{I}{1+ \mE_0 h} 
- \frac{(\grad \mE_0 h) \otimes x}{(1 + \mE_0 h)^2}. \label{D.tilde.gamma.inv.T} 
\end{align}
Then, the tangent line $T_{\g_h(x)}(\g_h(\S^1))$ to the curve $\g_h(\S^1)$ at $\g_h(x) \in \g_h(\S^1)$ can be proved to be
\begin{equation} \label{tangent.pa.Om}
T_{\g_h(x)}(\g_h(\S^1)) 
= \{ D_{\S^1} \g_h(x) v : v \in T_x(\S^1) \}
\qquad\forall x \in \S^1,
\end{equation}
while the outward unit normal vector field to $\g_h(\S^1)$ at $\g_h(x)$ is
\begin{equation}  \label{nu.h}
\nu_{h}\circ\gamma_h(x) = \frac{(1 + h(x)) x - \grad_{\S^1} h(x)}{\sqrt{(1 + h(x))^2 + |\grad_{\S^1} h(x)|^2}} = \frac{(1 + h(x)) x - \grad_{\S^1} h(x)}{J(h)}\qquad x\in\S^1,
\end{equation}
where
\begin{equation}\label{J.def}J(h):=\sqrt{(1+h)^2 + |\grad_{\S^1}h|^2}
\end{equation}

\begin{lemma}
\label{lem:formulas}
Let $\Omega_h$ be as in \eqref{Om.h},  
and let $\g_h$ be as in \eqref{gamma.diff}.
For any function $f\colon\, \pa \Omega_h \to \R$,
let $\tilde f$ be its pullback by $\g_h$, namely $\tilde f(x) = f\circ\g_h(x))$. 
Then 
\begin{equation}
\begin{split}
\nabla_{\pa \Omega_h} f\circ\gamma_h(x) &= \frac{\nabla_{\S^1} \tilde f(x)}{1+h} + \frac{\la \nabla_{\S^1}  \tilde f, \nabla_{\S^1} h \ra}{(1+h)J}\nu_{h} \circ\gamma_h(x)  \\
&=  \frac{\nabla_{\S^1}  \tilde f(x)}{1+h}  - \frac{\la \nabla_{\S^1}  \tilde f, \nabla_{\S^1} h \ra}{(1+h)J^2} \nabla_{\S^1} h + \frac{\la \nabla_{\S^1}  \tilde f, \nabla_{\S^1} h \ra}{J^2} x,
\end{split}
\label{eq.1.in.lem:formulas}
\end{equation}
\end{lemma}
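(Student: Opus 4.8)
The plan is to reduce the extension-dependent object $\nabla_{\pa\Omega_h}f$ to the intrinsic flat gradient on $\S^1$ by pushing everything through the extended diffeomorphism $F:=\mE_1\g_h$ of $\R^2\setminus\{0\}$ from \eqref{def.tilde.h}, using the fact (recalled before the lemma) that the tangential gradient does not depend on the chosen extension. First I would take as extension of $\tilde f$ its $0$-homogeneous extension $\mE_0\tilde f$; correspondingly $\bar f:=\mE_0\tilde f\circ F^{-1}$ is an admissible extension of $f$ near $\pa\Omega_h$, because $F|_{\S^1}=\g_h$ by \eqref{def.tilde.h}. Differentiating the identity $\mE_0\tilde f=\bar f\circ F$ by the chain rule gives $\nabla(\mE_0\tilde f)=[DF]^{T}\,(\nabla\bar f)\circ F$, so that on $\S^1$ one obtains
\begin{equation*}
(\nabla\bar f)\circ\g_h=[DF]^{-T}\,\nabla(\mE_0\tilde f).
\end{equation*}
By \eqref{tang.grad.S1} the right-hand factor equals $\nabla_{\S^1}\tilde f$ on $\S^1$, while by \eqref{D.tilde.gamma.inv.T}, together with $\mE_0 h=h$ and $\nabla\mE_0 h=\nabla_{\S^1}h$ on $\S^1$, the matrix $[DF]^{-T}$ restricts to $\tfrac{I}{1+h}-\tfrac{(\nabla_{\S^1}h)\otimes x}{(1+h)^2}$.

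Next I would apply this matrix to $\nabla_{\S^1}\tilde f$ and invoke the orthogonality \eqref{grad.s1.orth.}, i.e. $\la\nabla_{\S^1}\tilde f,x\ra=0$: the rank-one correction $(\nabla_{\S^1}h)\otimes x$ annihilates $\nabla_{\S^1}\tilde f$, leaving the clean relation $(\nabla\bar f)\circ\g_h=\tfrac{\nabla_{\S^1}\tilde f}{1+h}$. Feeding this into the definition of the tangential gradient, $\nabla_{\pa\Omega_h}f=\nabla\bar f-\la\nabla\bar f,\nu_h\ra\nu_h$, and composing with $\g_h$, I would then compute the scalar coefficient from \eqref{nu.h}: expanding
\begin{equation*}
\Big\la\frac{\nabla_{\S^1}\tilde f}{1+h},\,\nu_h\circ\g_h\Big\ra=\frac{1}{(1+h)J}\la\nabla_{\S^1}\tilde f,\,(1+h)x-\nabla_{\S^1}h\ra,
\end{equation*}
the term $(1+h)\la\nabla_{\S^1}\tilde f,x\ra$ vanishes again by \eqref{grad.s1.orth.}, so this coefficient equals $-\tfrac{\la\nabla_{\S^1}\tilde f,\nabla_{\S^1}h\ra}{(1+h)J}$. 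Collecting terms yields exactly the first displayed form in \eqref{eq.1.in.lem:formulas}.

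Finally, to obtain the second form I would simply substitute the explicit normal $\nu_h\circ\g_h=\tfrac{(1+h)x-\nabla_{\S^1}h}{J}$ from \eqref{nu.h} into the coefficient $\tfrac{\la\nabla_{\S^1}\tilde f,\nabla_{\S^1}h\ra}{(1+h)J}$ and distribute, which splits the normal contribution into a multiple of $x$ and a multiple of $\nabla_{\S^1}h$, matching the claimed expression term by term.

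The computation is short once the setup is fixed; the main points requiring care are purely bookkeeping. I must keep straight the inverse transpose $[DF]^{-T}$ rather than $[DF]^{-1}$ in the chain rule, and I must apply the orthogonality $\la\nabla_{\S^1}(\cdot),x\ra=0$ at the two places where the radial components are forced to drop. The only genuinely conceptual step is the extension-independence of $\nabla_{\pa\Omega_h}$, which legitimizes the convenient choice $\overline{\tilde f}=\mE_0\tilde f$ and thereby converts an a priori extension-dependent quantity into the intrinsic flat gradient on $\S^1$.
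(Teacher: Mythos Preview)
Your proof is correct and follows essentially the same route as the paper's own argument: both use the $0$-homogeneous extension pushed through $F=\mE_1\g_h$ (your $\bar f=\mE_0\tilde f\circ F^{-1}$ is exactly the paper's $f_0$), apply the chain rule with $[DF]^{-T}$ from \eqref{D.tilde.gamma.inv.T}, kill the rank-one term via \eqref{grad.s1.orth.}, and then subtract the normal component. Your write-up is in fact more explicit than the paper's about the final scalar computation and the passage from the first to the second displayed form.
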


\begin{proof} Given any $f\colon\, \pa\Omega_h \longmapsto \R$, we note that $f_0(x) = f(\gamma_h (|x|^{-1}x) )$ for all $x \in \R^2 \setminus \{ 0 \}$ is an extension for $f$ from $\pa\Omega_h$ to $\R^2\setminus\{0\}$
Analogously, let us consider $h_0(x) = (\mE_0 h)(x)= h(|x|^{-1}x)$. 
One has $\la \nabla f_0 , x \ra = \la \nabla h_0 , x \ra =0$ on $\R^2 \setminus \{ 0 \}$ 
and $\nabla h_0 = \nabla_{\S^1} h$ on $\S^1$. 

Taking any $\lambda>0$, it holds $(f_0 \circ \mE_1 \g_h)(\lambda x) = (f_0 \circ \mE_1 \g_h)(x)$ and thus
\[
\nabla (f_0 \circ \mE_1 \g_h)(x) = \nabla_{\S^1}(f_0 \circ \mE_1 \g_h)(x)  = \nabla_{\S^1} \tilde f(x). 
\] 
However, it holds also that
\[
\nabla (f_0 \circ \mE_1 \g_h)(x) = [D(\mE_1 \g_h)(x)]^T  \,  \nabla f_0\circ\mE_1 \g_h(x). 
\]
By \eqref{D.tilde.gamma.inv.T}
and \eqref{grad.s1.orth.}, 
for all $x \in \S^1$ we have
\[
\nabla f_0\circ\gamma_h(x) = [D(\mE_1 \g_h)(x)]^{-T}\nabla_{\S^1} \tilde f(x) 
= \frac{ \nabla_{\S^1} \tilde f(x)}{1+h(x)}. 
\]
and so, by \eqref{tangential.gradient}, we get \eqref{eq.1.in.lem:formulas}.

\end{proof}

We observe that $\R^2\setminus\{0\}$ is diffeomorphic to the periodic infinite strip $\mathcal{S}:=\R\times\T^1$ through the diffeomorphism $\phi\colon\,\mS\longmapsto\mathbb{R}^2\setminus\{0\}$, defined as
\begin{equation}\label{phi.map}\phi(\rho,\th):=e^{\rho}\begin{pmatrix}\cos\th \\ \sin\th
\end{pmatrix}\qquad\forall\th\in\T^1,\,\rho\in\R.
\end{equation}
In particular, $\phi(\T^1\times(-\infty,0))=\mathbb{D}\setminus\{0\}$ and $\phi(\{0\}\times\T^1)=\S^1$. Moreover, setting $\xi(\th):=\log(1+h\circ\phi(0,\th))=\log(1+h(\cos\th,\sin\th))$, we have
\begin{equation}\label{phi.omegah.t1graph}\g_\xi(\th):=\phi(\xi(\th),\th)=e^{\xi(\th)}\begin{pmatrix}\cos\th \\ \sin\th\end{pmatrix} = (1 + h(\cos\th,\sin\th))\begin{pmatrix}\cos\th \\ \sin\th
\end{pmatrix}.
\end{equation}
Setting
\begin{equation}\label{omegah.diffeo.to.graph}\begin{aligned}&\Omega_\xi:=\{(\rho,\th)\in\mS\colon\,\th\in\T^1,\,-\infty<\rho<\xi(\th)\},
\\&\pa\Omega_\xi = \{(\rho,\th)\in\mS\colon\,\th\in\T^1,\,\rho=\xi(\th)=\log(1+h(\cos\th,\sin\th))\},
\end{aligned}\end{equation}
we get $\phi(\Om_\xi)=\Om_h\setminus\{0\}$, $\phi(\pa\Om_\xi)=\pa\Om_h$ and that $\g_\xi\colon\,\T^1\longmapsto\pa\Om_\xi$ is a diffeomorphism from $\T^1$ to $\pa\Om_\xi$, namely a parametrization of $\pa\Om_\xi$. For all $\th\in\T^1$, denoting by $f':=\pa_\th f$ the derivative of $f$ in the $\th$ variable, one has that the derivative of $\g_\xi$ is 
\begin{equation}\label{parametrization.derivative}\g_\xi'(\th)=\begin{pmatrix}(\g_\xi')_1(\th) \\ (\g_\xi')_2(\th)\end{pmatrix}=e^{\xi(\th)}\Big\{\xi'(\th)\begin{pmatrix}\cos\th \\ \sin\th
\end{pmatrix} - \begin{pmatrix}\sin\th \\ -\cos\th
\end{pmatrix}\Big\}.
\end{equation}
If we equip $\R^2\setminus\{0\}$ with the Euclidean metric $g_{Euc}:=dx\otimes dx + dy\otimes dy$, then we can do the pullback of it through $\g_\xi$ to induce a metric $g$ on $\pa\Om_h$, whose local representation on $\T^1$ is
\begin{equation}\begin{aligned}g(\th)&=((\g_{\xi})_*g_{Euc})(\th)
\\&:=d_\th(\g_\xi)_1\otimes d_\th(\g_\xi)_2 + d_\th(\g_\xi)_2\otimes d_\th(\g_\xi)_2
\\&=[(\g_\xi')_1^2(\th)+(\g_\xi')_2^2(\th)]d\th\otimes d\th
\\&=e^{2\xi(\th)}(1+\xi'^2(\th))d\th\otimes d\th
\\&=:g_{\th\th}(\th)d\th\otimes d\th.
\end{aligned}\end{equation}
Let us also call 
\begin{equation}\label{metric.inverse}g^{\th\th}(\th):=(g_{\th\th}(\th))^{-1}=e^{-2\xi(\th)}(1+\xi'^2(\th))^{-1}.
\end{equation}
The following lemma provides a representation on $\T^1$ of $\grad_{\S^1}$, $\nu_{h}$ and some correlated quantities:

\begin{lemma}\label{derivative.change.strip}The following facts hold.

\bigskip

$(i)$ For any $f\colon\,\R^2\setminus\{0\}\longmapsto\R$, let us call $\tilde{f}:=f\circ\phi$ its representation on $\mS$. Then, 
\begin{align}&\grad_{\mS}\tilde f(\rho,\th):=\begin{pmatrix}\pa_\rho\tilde{f}(\rho,\th) \\ \pa_\th\tilde{f}(\rho,\th)\end{pmatrix} = e^\rho\begin{pmatrix}\cos\th & \sin\th \\ -\sin\th & \cos\th
\end{pmatrix}\grad f\circ\phi(\rho,\th) ,\label{grad.rho.th.to.grad.true}
\\&\grad f\circ\phi(\rho,\th) = e^{-\rho}\Big\{\pa_\rho\tilde{f}(\rho,\th)\begin{pmatrix}\cos\th \\ \sin\th
\end{pmatrix} - \pa_\th\tilde{f}(\rho,\th)\begin{pmatrix}\sin\th \\ -\cos\th\end{pmatrix}\Big\}, \label{grad.true.to.rho.th}
\\&\Delta f\circ\phi(\rho,\th)=e^{-2\rho}\Delta_{\mS}\tilde f (\rho,\th). \label{Laplace.Transformation}
\end{align}
Here, we are denoting by $\Delta_{\R^2},\,\Delta_{\mS}$ respectively the Laplace operators on $\R^2,\,\mS$.
$(ii)$ For any $f,g\colon\,\S^1\longmapsto\R$, let us call $\tilde{f}:=\mE_0f\circ\phi,\,\tilde{g}:=\mE_0g\circ\phi$ their respective representations on $\mS$. Then, $\tilde f(\th,\rho)=\tilde f(\th)$, $\tilde g(\th,\rho)=\tilde g(\th)$ for any $(\th,\rho)\in\mS$ and
\begin{align}&\grad_{\S^1}f\circ\phi(0,\th)=\tilde{f}'(\th)\begin{pmatrix}-\sin\th \\ \cos\th \label{grad.true.to.th.S1}
\end{pmatrix},
\\&\la\grad_{\S^1}f\circ\phi(0,\th),\grad_{\S^1}g\circ\phi(0,\th)\ra = \tilde{f}'(\th)\tilde{g}'(\th), \label{scalar.products.th}
\\&|\grad_{\S^1}f\circ\phi(0,\th)|^2=\tilde{f}'^2(\th),\label{norm.grad.S}
\\&\sqrt{(1+h\circ\phi(0,\th))^2 + |\grad_{\S^1}h\circ\phi(0,\th)|^2}=e^{\xi(\th)}\sqrt{1+\xi'^2(\th)}. \label{J.th.}
\end{align}
$(iii)$ Let $\nu_{h}$ be the normal vector field of $\pa\Om_h$, let $\nu_\xi:=\nu_{h}\circ\g_\xi$ be its local representations on $\mS$. Then,
\begin{equation}\label{nu.representation}\nu_\xi(\th)=(1+\xi'^2(\th))^{-\frac12}\Big\{\begin{pmatrix}\cos\th \\ \sin\th
\end{pmatrix} + \xi'(\th)\begin{pmatrix}\sin\th \\ -\cos\th
\end{pmatrix} \Big\} = |\g_\xi'(\th)|^{-1}\begin{pmatrix}0 & -1 \\ 1 & 0
\end{pmatrix} \g_\xi'(\th)
\end{equation}
$(iv)$ Let $\Phi\colon\,\Om_h\longmapsto\R$ be the solution of the problem
\begin{equation}\label{bp.on.h}\begin{aligned}&\Delta\Phi=0\qquad\qquad\,\,\text{in}\,\Om_h,
\\&\Phi\circ\g_h=\psi\qquad\quad\text{on}\,\S^1,
\end{aligned}\end{equation}
and let 
\begin{equation}\label{G.space}G(h)\psi:=\la\grad\Phi\circ\g_h,\nu_{h}\circ\g_h\ra,
\end{equation}
be the Dirichlet-Neumann operator.
Let us suppose that $\grad\Phi$ is bounded in $\Om_h$, and let $\tilde{\Phi}:=\Phi\circ\phi$ the representation of $\Phi$ on $\mS$. Then, for all $\th\in\T^1$ we have
\begin{equation}\label{G.into.G.on.T1}G(h)\psi\circ\phi(\th,0) = e^{-\xi(\th)}(1+\xi'^2(\th))^{-\frac12}\tilde{G}(\xi)\chi(\th),\end{equation}
where $\chi:=\Phi\circ\g_\xi$ and $\tilde{G}(\xi)\chi$ is the Dirichlet-Neumann operator on the torus at infinite depth, namely
\begin{equation}\tilde{G}(\xi)\chi=\sqrt{1+\xi'^2}\cdot\la\grad\tilde{\Phi}\circ\g_\xi,\nu_{\xi}\ra,
\end{equation}
where $\tilde{\Phi}\colon\,\bar{\Om}_\xi\longmapsto\R$ is the solution to the Dirichlet problem
\begin{equation}\begin{aligned}\label{generic.bp}&\Delta\tilde\Phi=0\qquad\qquad\qquad\text{in}\,\,\Om_\xi,
\\&\tilde\Phi\circ\g_\xi=\chi\qquad\qquad\,\,\,\,\text{on}\,\,\T^1,
\\&\pa_\rho\tilde\Phi(\rho,\th)\to0\qquad\quad\,\text{as}\,\,\rho\to-\infty
\end{aligned}\end{equation}  

\end{lemma}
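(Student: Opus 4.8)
The four statements all follow from the single fact that the log-polar map $\phi$ in \eqref{phi.map} is conformal, so the plan is to compute its differential once and then read off each identity.

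\emph{Part (i).} First I would compute the Jacobian $D\phi(\rho,\th)=e^\rho R(\th)$ with $R(\th)=\left(\begin{smallmatrix}\cos\th & -\sin\th\\ \sin\th & \cos\th\end{smallmatrix}\right)$ orthogonal. The chain rule $\grad_{\mS}\tilde f=(D\phi)^T(\grad f\circ\phi)$ gives \eqref{grad.rho.th.to.grad.true} at once, and inverting (using $R^{-1}=R^T$) gives \eqref{grad.true.to.rho.th}. For \eqref{Laplace.Transformation} I would note that $(D\phi)^T D\phi=e^{2\rho}I$, so the pullback $\phi^*g_{Euc}=e^{2\rho}(d\rho\otimes d\rho+d\th\otimes d\th)$ is conformal to the flat metric on $\mS$; since in two dimensions the Laplace--Beltrami operator of a metric $e^{2\rho}\delta$ is $e^{-2\rho}$ times the flat one (equivalently, $w\mapsto e^w$ is holomorphic with $|dz/dw|^2=e^{2\rho}$), the transformation follows.

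\emph{Parts (ii) and (iii).} For (ii) I would specialize (i) to $f\mapsto\mE_0 f$: since $\mE_0 f$ is $0$-homogeneous, $\tilde f=\mE_0 f\circ\phi$ is independent of $\rho$, so $\pa_\rho\tilde f\equiv0$, and $\grad_{\S^1}f=\grad\mE_0 f$ on $\S^1$ by Euler's identity and \eqref{tang.grad.S1}. Evaluating \eqref{grad.true.to.rho.th} at $\rho=0$ yields \eqref{grad.true.to.th.S1}, and then \eqref{scalar.products.th}--\eqref{norm.grad.S} follow because $(-\sin\th,\cos\th)$ is a unit vector. Identity \eqref{J.th.} comes from $e^{\xi}=1+h\circ\phi(0,\th)$ together with $\tilde h'=\xi' e^{\xi}$ (differentiate $\xi=\log(1+\tilde h)$) and \eqref{norm.grad.S}. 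For (iii) I would start from the explicit normal \eqref{nu.h}, pull it back along $\g_\xi$ at $\rho=0$, and substitute $1+h=e^\xi$, $\grad_{\S^1}h\circ\phi(0,\th)=\xi' e^\xi(-\sin\th,\cos\th)$ and $J(h)=e^\xi\sqrt{1+\xi'^2}$ from (ii); cancelling $e^\xi$ produces the two unit vectors in \eqref{nu.representation}, and the second form is just the tangent $\g_\xi'$ of \eqref{parametrization.derivative} rotated by a quarter turn and normalized by $|\g_\xi'|=e^\xi\sqrt{1+\xi'^2}$.

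\emph{Part (iv).} This is the crux. The plan is: (a) by \eqref{Laplace.Transformation}, $\Delta\Phi=0$ in $\Om_h$ is equivalent to $\Delta_{\mS}\tilde\Phi=0$ in $\Om_\xi=\phi^{-1}(\Om_h\setminus\{0\})$, since the factor $e^{-2\rho}$ never vanishes; (b) the Dirichlet datum transforms as $\tilde\Phi(\xi(\th),\th)=\Phi(\g_\xi(\th))=\chi(\th)$; (c) the decay $\pa_\rho\tilde\Phi\to0$ as $\rho\to-\infty$ follows from boundedness of $\grad\Phi$, because \eqref{grad.rho.th.to.grad.true} and the orthogonality of $R(\th)$ give $|\grad_{\mS}\tilde\Phi(\rho,\th)|=e^{\rho}|\grad\Phi\circ\phi|\le Ce^{\rho}\to0$. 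Thus $\tilde\Phi$ solves \eqref{generic.bp}, and $\tilde G(\xi)\chi$ acquires its standard graph form $(\pa_\rho\tilde\Phi-\xi'\pa_\th\tilde\Phi)|_{\rho=\xi(\th)}$. Finally I would evaluate $G(h)\psi$ at $\phi(0,\th)$: since $\g_h(\cos\th,\sin\th)=\phi(\xi(\th),\th)$, I insert \eqref{grad.true.to.rho.th} at $\rho=\xi(\th)$ for $\grad\Phi\circ\phi$ and the normal \eqref{nu.representation}, and use the orthonormality of $(\cos\th,\sin\th)$ and $(\sin\th,-\cos\th)$; the cross terms drop and what remains is $e^{-\xi}(1+\xi'^2)^{-1/2}(\pa_\rho\tilde\Phi-\xi'\pa_\th\tilde\Phi)$, which is exactly \eqref{G.into.G.on.T1}.

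Parts (i)--(iii) are routine, as everything reduces to the one matrix computation of $D\phi$. I expect the only genuine difficulty to lie in (iv): one must justify that the bounded-gradient harmonic extension on the punctured drop corresponds precisely to the infinite-depth harmonic extension on the strip, i.e.\ that the decay at $\rho\to-\infty$ produced by boundedness of $\grad\Phi$ is exactly the condition singling out the solution of \eqref{generic.bp}, so that the two Dirichlet--Neumann operators are genuinely comparable. Once this correspondence is secured, the conformal factor $e^{-\xi}(1+\xi'^2)^{-1/2}$ is forced by the pointwise algebra above.
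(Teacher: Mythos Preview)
Your proposal is correct and follows essentially the same route as the paper: the paper also computes the partial derivatives of $\tilde f$ directly via the chain rule to obtain (i), specializes \eqref{grad.true.to.rho.th} to the $0$-homogeneous extension for (ii), substitutes into \eqref{nu.h} for (iii), and for (iv) verifies the three conditions of \eqref{generic.bp} (harmonicity via \eqref{Laplace.Transformation}, the boundary datum by definition of $\chi$, and the decay $\pa_\rho\tilde\Phi\to 0$ from boundedness of $\grad\Phi$) before computing the inner product $\la\grad\Phi\circ\g_\xi,\nu_\xi\ra$ exactly as you describe. The only stylistic difference is that you invoke conformality of $\phi$ for \eqref{Laplace.Transformation} whereas the paper leaves it as a direct computation; and the well-posedness concern you flag for \eqref{generic.bp} is handled in the paper not inside the proof but in the remark immediately following the lemma.
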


\begin{proof} $(i)$ For all $(\th,\rho)\in\mS$, one has
\begin{align*}&\pa_\rho\tilde{f}(\rho,\th)=\pa_\rho f(e^\rho(\cos\th,\sin\th)) = e^\rho[\cos\th\cdot\pa_x f(\phi(\rho,\th)) + \sin\th\cdot\pa_y f(\phi(\rho,\th))],
\\&\pa_\th\tilde{f}(\rho,\th)=\pa_\th f(e^\rho(\cos\th,\sin\th)) = e^\rho[-\sin\th\cdot\pa_x f(\phi(\rho,\th)) + \cos\th\cdot\pa_y f(\phi(\rho,\th))],
\end{align*}
and so we straightforward get \eqref{grad.rho.th.to.grad.true}, \eqref{grad.true.to.rho.th} and also \eqref{Laplace.Transformation}.

$(ii)$ To have \eqref{grad.true.to.th.S1}, it is enough to apply \eqref{grad.true.to.rho.th} to $\mE_0 f$. The \eqref{scalar.products.th}, \eqref{norm.grad.S} and \eqref{J.th.} are given by straightforward computations.

$(iii)$ By \eqref{nu.h}, \eqref{grad.true.to.th.S1}, \eqref{scalar.products.th}, \eqref{norm.grad.S} and \eqref{J.th.} we have
\begin{align*}\nu_\xi(\th)&=\frac{(1 + h(\cos\th,\sin\th))\begin{pmatrix}\cos\th \\ \sin\th
\end{pmatrix} - (\grad_{\S^1} h)(\cos\th,\sin\th)}{\sqrt{(1 + h(\cos\th,\sin\th))^2 + |(\grad_{\S^1} h)(\cos\th,\sin\th)|^2}}
\\&=\frac{1}{e^{\xi(\th)}\sqrt{1 + \xi'^2(\th)}}\Big\{e^{\xi(\th)}\begin{pmatrix}\cos\th \\ \sin\th
\end{pmatrix} - e^{\xi(\th)}\xi'(\th)\begin{pmatrix}-\sin\th \\ \cos\th  \end{pmatrix}\Big\}
\\&=\frac{1}{\sqrt{1+\xi'^2(\th)}}\Big\{\begin{pmatrix}\cos\th \\ \sin\th\end{pmatrix} + \xi'(\th)\begin{pmatrix}\sin\th \\ -\cos\th
\end{pmatrix} \Big\},
\end{align*}
which gives the first identity. The second identity can be directly checked by \eqref{parametrization.derivative}.

$(iv)$  Let $\Phi\colon\,\bar{\Om}_h\longmapsto\R$ be the solution of \eqref{generic.bp} with boundary datum $\psi$, and let us call $\tilde{\Phi}:=\Phi\circ\phi$ its representation on $\bar{\Om}_\xi=\phi^{-1}(\bar{\Om}_h)$: then, we get that $\tilde\Phi$ solves the problem \eqref{generic.bp}, since the first equation comes from \eqref{Laplace.Transformation} the boundary condition from the definition of $\chi$ and the infinity condition from the fact that $\grad\Phi$ is bounded and so by \eqref{grad.rho.th.to.grad.true}, \begin{equation*}|\pa_\rho\tilde\Phi(\rho,\th)|=e^{\rho}|\la(\cos\th,\sin\th),\grad\Phi\circ\phi(\rho,\th)\ra|\to0\qquad\text{as}\quad \rho\to-\infty.\end{equation*}
Also, by \eqref{grad.true.to.rho.th} and \eqref{nu.representation} we have
\begin{align*}G(h)\psi\circ\phi(0,\th)&=\la\grad\Phi\circ\g_\xi(\th), \nu_{h}\circ\g_\xi(\th)\ra
\\&=e^{-\xi(\th)}(1+\xi'^2(\th))^{-\frac12}[\pa_\rho\tilde{\Phi}(\xi(\th),\th) - \pa_\th\tilde{\Phi}(\xi(\th),\th)\xi'(\th)]
\\&=e^{-\xi(\th)}(1+\xi'^2(\th))^{-\frac12}\tilde{G}(\xi)\chi(\th).
\end{align*}

\end{proof}

\begin{remark}\label{rem} We notice that if $\tilde\Phi$ is a (unique) smooth solution of \eqref{generic.bp}, then $\tilde\Phi=O(e^{\rho})$ as $\rho\to-\infty$. To see this, let us first suppose that $\xi\equiv0$: looking for $\tilde{\Phi}$ of the kind
\begin{equation*}\tilde{\Phi}(\rho,\th)=\sum_{n\in\Z}\Phi_n(\rho)e^{in\th},\qquad\tilde{\Phi}(0,\th)=\chi(\th)=\sum_{n\in\Z}\chi_ne^{in\th},\qquad\chi_{-n}=\bar{\chi}_n
\end{equation*}
then one gets
\begin{align*}&\pa_{\rho\rho}\Phi_n=n^2\Phi_n,
\\&\Phi_n(0)=\chi_n(0),\qquad\lim_{\rho\to-\infty}\pa_\rho\Phi_n(\rho)=0,
\end{align*}
which is uniquely solved by $\Phi_n(\rho)=\chi_n e^{|n|\rho}$. As a result, one has
\begin{align*}&\Phi(\rho,\th)=\sum_{n\in\Z}\psi_n e^{|n|\rho}e^{in\th},
\\&\grad\Phi(\rho,\th)=\sum_{n\in\Z}\begin{pmatrix}|n| \\ in
\end{pmatrix}\psi_n e^{|n|\rho}e^{in\th},
\end{align*}
and so one discovers that there exists the limit
\begin{equation*}\lim_{\rho\to-\infty}e^{-\rho}\grad\Phi(\rho,\th)=\begin{pmatrix}1 \\ i
\end{pmatrix}\chi_1 e^{i\th} + \begin{pmatrix}1 \\ -i
\end{pmatrix}\bar{\chi}_1 e^{-i\th}.
\end{equation*}
Similar computations hold even if $\xi\not\equiv0$, because the solution of \eqref{generic.bp} is an extension of the solution of the problem
\begin{align*}&\Delta\tilde\Phi=0\qquad\qquad\qquad\,\,\,\,\text{in}\,\,\Om_\xi,
\\&\tilde\Phi=\tilde{\Phi}|_{\{\log\|\xi\|_\infty\}\times\T^1}\quad\,\,\text{on}\,\,\{\log\|\xi\|_\infty\}\times\T^1,
\\&\pa_\rho\tilde\Phi(\rho,\th)\to0\qquad\quad\,\,\,\,\,\text{as}\,\,\rho\to-\infty.    
\end{align*}
As a consequence, by similar computations done in the proof of Lemma \ref{derivative.change.strip}, point $(iv)$, we get that $\Phi:=\tilde\Phi\circ\phi^{-1}$ is a (unique) smooth solution of \eqref{bp.on.h}.
\end{remark}

We use the metric $g_h$ and the representation \eqref{nu.representation} to define the curvature of $\pa\Om_h$:
\begin{equation}\label{curvature}(H(h))(\th):=-g^{\th\th}(\th)\la \g_\xi''(\th), \nu_\xi(\th)\ra.
\end{equation}
Geometrically speaking, it is nothing less but the opposite of the trace of the Second Fundamental Form $II(\th):=\la\g_\xi''(\th), \nu_\xi(\th)\ra d\th\otimes d\th$ of $\pa\Om_h$ through the metric $g$, see for instance \cite{GHL}. By a direct computation, one can check that
\begin{equation}\begin{aligned}\label{mean.curvature}H(h)&=-(1+\xi'^2))^{-\frac32}e^{-\xi}\{\xi'' - 1-\xi'^2\}
\\&=e^{-\xi}\Big[ \frac{1}{\sqrt{1+\xi'^2}} - \Big(\frac{\xi'}{\sqrt{1+\xi'^2}}\Big)' \Big].
\end{aligned}\end{equation}
One can notice that in the last identity, the first term is the curvature of the graph of $\xi(\th)$ up to the conformal factor $e^{-\xi(\th)}$. Let us notice that $H(0)=1$, which corresponds to the curvature of the unit circle.

\subsection{The spaces of Sobolev and analytic functions on $\T^1$}

We introduce here the functional setting we will work with. 
All the functions $f\in L^2$ can be expanded in Fourier series along the $L^2-$orthonormal basis $\{\ph_{\ell,m}\colon\,(\ell,m)\in\mT\}$, with $\mT:=\{(0,0)\}\cup(\N\times\{-1,1\})$:
\begin{equation*}f=\sum_{(\ell,m)\in\mT}f_{\ell,m}\ph_{\ell,m};
\end{equation*}
here,
\begin{equation}\label{Fourier.basis}\ph_{\ell,m}(\th):=\begin{cases}C_{0} & \text{if}\qquad \ell=m=0,
\\C_\ell\cos(\ell\th)& \text{if}\qquad \ell>0,m=1,
\\C_\ell\sin(\ell\th)& \text{if}\qquad \ell>0,m=-1,
\end{cases}
\end{equation}
One defined the Sobolev space $H^s$ as
\begin{equation}H^s=\Big\{f\in L^2\colon\,\|f\|_{H^s}^2=\sum_{(\ell,m)\in\mT}(1+|\ell|^{2s})|f_{\ell,m}|^2<+\infty\Big\},
\end{equation}
In what will follow, we will denote by $(H^s)^2:=H^s\times H^s$.

We define also the class of analytic functions with exponential decay rate $\mathfrak{s}\ge0$ on torus:
\begin{equation}H^{\mathfrak{s},s}:=\Big\{f\in L^2\colon\,\|f\|_{H^{\mathfrak{s},s}}^2:=\sum_{(\ell,m)\in\mT}e^{2\mathfrak{s}\ell}(1+|\ell|^{2s})|f_{\ell,m}|^2<+\infty \Big\}.
\end{equation}

The following lemma concerns some algebraic and regularity properties of the Dirichlet-Neumann operator for Water Wave equations in infinite depth. 

\begin{lemma}[from \cite{Lannes, Lannes1, BMV}]\label{lemma:G}
Calling $\dot{H}$ the space of Sobolev functions with zero average, one has the following facts.

$(i)$(Low norm estimate) For all fixed $\xi\in H^{s_0}$ with $s_0>\frac32$, one has that $\tilde G(\xi)[\cdot]$ is a bounded operator from $H^{\frac12}$ to $H^{-\frac12}$, that is, 
\begin{equation}\label{G.low.norm.estimate}\|\tilde G(\xi)\chi\|_{H^{-\frac12}}\le C(\xi)\cdot\|\chi\|_{H^\frac12},
\end{equation}
where $C(\xi)$ is a constant depending on $\xi$.

$(ii)$(Symmetricity and nonnegativity) The operator $\chi\in H^{\frac12}\longmapsto\tilde G(\xi)\chi\in H^{-\frac12}$ is $L^2-$symmetric and nonnegative for all fixed $\xi\in W^{1,\infty}$, that is, for all $\chi_1,\chi_2\in H^{\frac12}$,
\begin{align}&\la \tilde G(\xi)\chi_1,\chi_2\ra_{L^2} = \la \tilde G(\xi)\chi_2,\chi_1\ra_{L^2},
\label{G.is.symmetric}
\\&\la\tilde G(\xi)\chi_1,\chi_1\ra_{L^2}\ge0.\label{G.is.nonnegative}
\end{align}
$(iii)$(Constants are in $\ker\tilde G(\xi)[\cdot]$) For all constant functions $\chi\equiv c\in\R$, for all fixed $\xi\in H^{s_0}$ with $s_0>\frac32$,
\begin{equation}\tilde G(\xi)c=0. \label{G.is.zero.on.constants}
\end{equation}
$(iv)$(Invariance by inversion of sign) Let $\xi\in H^{s_0}$ with $s_0>\frac32$, $\chi\in H^{\frac12}$, and let us call $\iota$ the operator of inversion of sign, that is, for all functions $f\colon\,\T^1\longmapsto\R$ and $\th\in\T^1$, $(\iota f)(\th):=f(-\th)$. Then,
\begin{equation}\label{G.inversion}\tilde G(\iota\xi)[\iota\chi]=\iota(\tilde G(\xi)\chi).
\end{equation}
$(v)$(Translation invariance of $\tilde G$) Let $\mT_\alpha$ be the translation operator, that is, $(\mT_\alpha f)(\th):=f(\th+\alpha)$ for any $f\colon\,\T^1\longmapsto\R$, $\th\in\T^1$ and $\alpha\in\T^1$. Then, for any $\alpha\in\T^1$, $\xi\in H^{s_0}$ with $s_0>\frac32$ and $\chi\in H^{\frac12}$,
\begin{equation}\label{G.translation.inv}\mT_\alpha(\tilde G(\xi)\chi)=\tilde G(\mT_{\alpha}\xi)[\mT_{\alpha}\chi].
\end{equation}
$(vi)$(High norm estimates) Let $s_0>\frac12$, let $s\ge0$ and $\xi\in H^{s+\frac12}\cap H^{s_0+1}$. Then, 
\begin{align}&\|\tilde G(\xi)\chi\|_{H^{s-\frac12}}\le C(\|\xi\|_{s_0+1},\|\xi\|_{s+\frac12})\cdot\|\chi'\|_{H^{s-\frac12}},\qquad\forall0\le s\le s_0+\frac32,\,\forall\chi\in\dot{H}^{s-\frac12}, \label{G.High.norm.est.low}
\\& \|\tilde G(\xi)\chi\|_{H^{s-\frac12}}\le C(\|\xi\|_{s_0+2})[\|\chi'\|_{H^{s-\frac12}} + \|\xi\|_{H^{s+\frac12}}\|\chi'\|_{H^{s_0+1}}],\qquad\forall0\le s\le s_0+\frac32,\,\forall\chi\in\dot{H}^{s-\frac12}\label{G.High.norm.est.high}
\end{align}

$(vii)$(Analiticity in finite regularity) $s_0>\frac12$, $0\le s\le s_0+\frac12$ and $\|\xi\|_{H^{s_0 + 1}}<\delta_0$. Then, the map $\xi\in H^{s_0+1}\longmapsto\tilde G(\xi)\in\mathcal{L}(H^{s+\frac12},H^{s-\frac12})$ is analytic.

$(viii)$(Shape derivative) Let, $s_0>\frac12$, $\chi\in H^{\frac32}$, $\xi\in H^{s_0+1}$. Then, for all directions $\hat\xi\in H^{s_0+1}$ corresponding to the $\xi-$variable,
\begin{equation}\label{shape.derivative}d(\tilde G(\xi)\hat\xi)\chi=-\tilde G(\xi)[B\hat\xi] - (V\hat\xi)', \end{equation}
where
\begin{equation}\label{B.V.}B:=\frac{\tilde G(\xi)\chi + \xi'\chi'}{1+\xi'^2},\qquad V:=\chi'-B\xi'.
\end{equation}

$(ix)$(Analiticity for analytic regularity) Let $\mathfrak{s}>0$, let $s,s_0$ such that $s,s_0+\frac12\in\N$ and $s-\frac32\ge s_0>1$. Then, there exists $\e_0:=\e_=(s)>0$ such that for all $\xi$ for which $\|\xi\|_{H^{\mathfrak{s},s_0+\frac32}}<\e_0$, the map $\xi\in H^{\mathfrak{s},s}\longmapsto\tilde G(\xi)\in\mathcal{L}(H^{\mathfrak{s},s},H^{\mathfrak{s},s-1})$ is analytic and in particular satisfies the tame estimate
\begin{equation}\label{analiticity.for.analytic.functions}\|\tilde G(\xi)\chi\|_{H^{\mathfrak{s},s-1}}\le C(s)[\|\chi\|_{H^{\mathfrak{s},s}} + \|\xi\|_{H^{\mathfrak{s},s}}\cdot\|\chi\|_{H^{\mathfrak{s},s_0+\frac32}}].
\end{equation}

\end{lemma}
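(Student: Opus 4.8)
The plan is to exploit the fact that, after identifying $\rho$ with the vertical coordinate, $\tilde G(\xi)$ is \emph{exactly} the Dirichlet--Neumann operator of the infinite-depth water waves problem on the periodic strip $\mS=\T^1\times\R$: indeed the computation in the proof of Lemma \ref{derivative.change.strip}$(iv)$ shows that $\tilde G(\xi)\chi=(\pa_\rho\tilde\Phi-\xi'\pa_\th\tilde\Phi)|_{\rho=\xi}$, which is the standard normal-derivative operator in these coordinates. Accordingly, I would derive the elementary facts $(i)$--$(v)$ and the shape derivative $(viii)$ from the variational/Green characterization of $\tilde G(\xi)$ by hand, and import the tame high-norm estimates $(vi)$, $(vii)$ and $(ix)$ from \cite{Lannes, Lannes1, BMV}, checking only that the functional framework matches.

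First I would establish the fundamental Green identity. Fixing data $\chi_1,\chi_2$ with harmonic extensions $\tilde\Phi_1,\tilde\Phi_2$ solving \eqref{generic.bp}, integration by parts over $\Om_\xi$ yields
\begin{equation*}\la\tilde G(\xi)\chi_1,\chi_2\ra_{L^2}=\int_{\Om_\xi}\la\grad_{\mS}\tilde\Phi_1,\grad_{\mS}\tilde\Phi_2\ra\,d\rho\,d\th,\end{equation*}
where the contribution at $\rho\to-\infty$ vanishes because $\tilde\Phi_j=O(e^\rho)$ and $\pa_\rho\tilde\Phi_j\to0$ by Remark \ref{rem}. From this single identity property $(ii)$ is immediate: symmetry is the manifest symmetry of the right-hand side in the indices $1,2$, and nonnegativity is the case $\chi_1=\chi_2$. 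Property $(iii)$ follows since for $\chi\equiv c$ the constant $\tilde\Phi\equiv c$ solves \eqref{generic.bp}, so $\grad_{\mS}\tilde\Phi=0$; and the low-norm bound $(i)$ is obtained by combining the Dirichlet-energy identity with the elliptic estimate $\|\grad_{\mS}\tilde\Phi\|_{L^2(\Om_\xi)}\le C(\xi)\|\chi\|_{H^{1/2}}$ for the decaying harmonic extension, testing against an arbitrary $H^{1/2}$ function.

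For the symmetries $(iv)$ and $(v)$ I would use that the sign inversion $\iota\colon\th\mapsto-\th$ and the translation $\mT_\alpha\colon\th\mapsto\th+\alpha$ are isometries of the flat strip $\mS$ carrying $\Om_\xi$ onto $\Om_{\iota\xi}$ and $\Om_{\mT_\alpha\xi}$ respectively; by uniqueness of the decaying harmonic extension the potential transforms covariantly (the extension attached to $(\iota\xi,\iota\chi)$ is $\tilde\Phi\circ\iota$), and since the defining expression $\pa_\rho\tilde\Phi-\xi'\pa_\th\tilde\Phi$ transforms the same way, \eqref{G.inversion} and \eqref{G.translation.inv} drop out after a short chain-rule check. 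The shape derivative $(viii)$ is the classical Lannes computation: differentiating \eqref{generic.bp} in $\xi$ along $\hat\xi$, the shape derivative of $\tilde\Phi$ solves a harmonic problem whose boundary data are read off through the quantities $B,V$ of \eqref{B.V.}, and reorganizing the normal derivative of the resulting potential produces \eqref{shape.derivative}.

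The genuinely technical content, and the main obstacle, lies in the tame estimates $(vi)$, the analytic dependence on $\xi$ in finite regularity $(vii)$, and especially the analyticity together with the tame bound \eqref{analiticity.for.analytic.functions} in the exponentially weighted class $H^{\mathfrak{s},s}$ of $(ix)$. These require paradifferential/elliptic analysis tracking the precise dependence of the estimates on $\|\xi\|$, and I would not reprove them here: since $\tilde G(\xi)$ coincides with the infinite-depth water-waves Dirichlet--Neumann operator on $\T^1$, they are exactly the statements of \cite{Lannes, Lannes1} (finite regularity, tame estimates, analyticity) and of \cite{BMV} (analyticity and tame bounds in the analytic-decay spaces). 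The remaining step is to verify that the hypotheses transfer verbatim: the zero-average spaces $\dot H$ in $(vi)$, the smallness threshold $\|\xi\|_{H^{s_0+1}}<\delta_0$ in $(vii)$, and $\|\xi\|_{H^{\mathfrak{s},s_0+\frac32}}<\e_0$ in $(ix)$.
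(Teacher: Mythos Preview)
Your proposal is correct, and in fact goes well beyond what the paper does: the paper provides no proof at all for this lemma, simply attributing it to \cite{Lannes, Lannes1, BMV} in the statement header. Your sketch of the elementary parts $(i)$--$(v)$ and $(viii)$ via the Green identity and the isometry/uniqueness arguments is sound, and your decision to defer the tame estimates $(vi)$, $(vii)$, $(ix)$ to the cited references is exactly the paper's implicit stance.
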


\section{The capillary drop equations}

\subsection{The Craig-Sulem formulation on $\S^1$ and $\T^1$}

In this section, we will derive the $2D$ capillary drop equations, we will show their equivalence with the free-boundary equations \eqref{div.eq.01}-\eqref{kin.eq.01} and finally we rewrite them on the torus $\T^1$.

Let us recall the equations \eqref{div.eq.01}-\eqref{kin.eq.01}:
\begin{align*}
&\div u 
 = 0 \quad \qquad\qquad\,\,\,\,\,\quad\text{in} \ \Om_t,
\\ 
 &\pa_t u + u \cdot \grad u + \grad p 
 = 0 \quad \text{in} \ \Om_t, 
\\
&
p  = \sigma_0 H_{\Om_t} \quad\qquad\qquad\quad \text{on} \ \pa \Om_t, 
\\
&
V_t  = \la u , \nu_{t} \ra \quad\qquad\quad\,\,\,\,\, \ \text{on} \ \pa \Om_t.
\end{align*}
We recall that $\Om_t$ is a star-shaped domain whose boundary is of the form
\begin{equation}\label{pa.Om.t}\pa\Om_t:=\{(1+h(t,x))x\,\colon\,x\in\S^1\},\qquad h(t,\cdot)\colon\,\S^1\longmapsto(-1,+\infty),\qquad t\in\R
\end{equation}
and so the map $\g_t\colon\,\S^1\longmapsto\pa\Om_t$, which is given by 
\begin{equation}\label{Om.t.par.}\g_t(x):=(1+h(t,x))x,\end{equation}
for all $t\in\R,\,x\in\S^1$ (see \eqref{gamma.diff}), is a diffeomorphism between $\S^1$ and $\pa\Om_t$. Define the normal boundary velocity as
\begin{equation}\label{boundary.velocity.Om.t} V_t(\g_t(x)):=\la\pa_t\g_t(x), \nu_{t}\ra,
\end{equation}
where $\nu_t$ is the outer normal vector field of $\pa\Om_t$.

Let us recall that, by assumption, $u$ is curl-free, that is, $\text{curl}\,u=0$. Since $\pa\Om_t$ is star-shaped, there exists a scalar potential $\Phi$ in $\Om_t$, that is, \begin{equation}\label{potential.u}u=\grad\Phi\quad\text{in}\,\Om_t;\end{equation}
we recall also that $\Phi$ solves the Dirichlet problem \eqref{Laplac.problem}, which is
\begin{equation*}\begin{aligned}&\Delta\Phi(t,\cdot)=0\qquad\qquad\qquad\qquad\text{in}\,\Om_t,
\\&\Phi(t,\cdot)\circ\g_t=\psi(t,\cdot)\qquad\qquad\quad\text{on}\,\S^2.
\end{aligned}\end{equation*}

Let us derive the Craig-Sulem formulation of \eqref{div.eq.01}-\eqref{kin.eq.01}, and show that they are equivalent:
\begin{theorem}\label{thm:ww.eq.}In the notations above, the equations \eqref{div.eq.01}-\eqref{kin.eq.01} are equivalent to the Water Wave equations \eqref{h.eq.}-\eqref{psi.eq.}, which we remember to be
\begin{align*}& \pa_t h = \frac{\sqrt{(1+h)^2 + |\grad_{\S^1} h|^2}}{1 + h} \, G(h)\psi,
\\
& \pa_t \psi =
 \frac12 \Big( G(h)\psi 
+ \frac{\la \grad_{\S^1} \psi , \grad_{\S^1} h \ra}{(1+h) \sqrt{(1+h)^2 + |\grad_{\S^1} h|^2}} \Big)^2
- \frac{|\grad_{\S^1} \psi|^2}{2(1+h)^2} 
  - \s_0 (H(h)-1),
\end{align*}
where $h,\psi\colon\,\S^1\longmapsto\R$.

\end{theorem}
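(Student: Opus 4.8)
The plan is to establish the two equations \eqref{h.eq.} and \eqref{psi.eq.} separately, deriving the first from the kinematic boundary condition \eqref{kin.eq.01} and the second from the Bernoulli relation together with the capillary pressure law \eqref{pressure.eq.01}; the equivalence is then completed by observing that every step is reversible. Throughout I work with the harmonic potential $\Phi$ solving \eqref{Laplac.problem}, set $u=\grad\Phi$, abbreviate $G:=G(h)\psi=\la\grad\Phi\circ\g_h,\nu_{h}\circ\g_h\ra$ as in \eqref{G.space}, and decompose $\grad\Phi$ on $\pa\Om_h$ into its normal part $G\,\nu_{h}$ and its tangential part $T:=\grad_{\pa\Om_h}\Phi$.

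First I would derive \eqref{h.eq.}. Differentiating $\g_t(x)=(1+h(t,x))x$ in time gives $\pa_t\g_t=(\pa_t h)\,x$, so by \eqref{boundary.velocity.Om.t} the normal velocity is $V_t\circ\g_t=(\pa_t h)\la x,\nu_{h}\ra$. Using the expression \eqref{nu.h} for $\nu_{h}$ together with $|x|=1$ and the orthogonality $\la\grad_{\S^1}h,x\ra=0$ from \eqref{grad.s1.orth.}, one gets $\la x,\nu_{h}\ra=(1+h)/J(h)$ with $J(h)$ as in \eqref{J.def}. Equating $V_t\circ\g_t$ with $\la u,\nu_{t}\ra=G$ (from \eqref{kin.eq.01} and \eqref{G.space}) yields $(\pa_t h)(1+h)/J(h)=G$, which is \eqref{h.eq.}.

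Next comes \eqref{psi.eq.}, the technical heart. Since $\psi(t,x)=\Phi(t,\g_t(x))$, the chain rule gives $\pa_t\psi=\pa_t\Phi+\la\grad\Phi,\pa_t\g_t\ra=\pa_t\Phi+(\pa_t h)\la\grad\Phi,x\ra$, where $\pa_t\Phi$ is the partial time derivative appearing in Bernoulli. The identity $\pa_t\Phi+\tfrac12|\grad\Phi|^2+p=\s_0$ and the pressure law $p=\s_0 H(h)$ give $\pa_t\Phi=-\tfrac12|\grad\Phi|^2-\s_0(H(h)-1)$, so it remains to rewrite $-\tfrac12|\grad\Phi|^2+(\pa_t h)\la\grad\Phi,x\ra$ as the first two terms of \eqref{psi.eq.}. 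The idea is to compute the scalar quantities $\la\grad\Phi,x\ra$ and $|\grad\Phi|^2=G^2+|T|^2$ by feeding the tangential-gradient formula of Lemma \ref{lem:formulas} (applied to $\tilde f=\psi$) into the orthogonal decomposition and repeatedly using $\la\grad_{\S^1}\psi,x\ra=\la\grad_{\S^1}h,x\ra=0$ and $|x|=1$. Writing $A:=\la\grad_{\S^1}\psi,\grad_{\S^1}h\ra/((1+h)J(h))$, this produces $\la\grad\Phi,x\ra=G(1+h)/J(h)+\la\grad_{\S^1}\psi,\grad_{\S^1}h\ra/J(h)^2$ and, after collapsing the $\la\grad_{\S^1}\psi,\grad_{\S^1}h\ra^2$ terms via $J(h)^2=(1+h)^2+|\grad_{\S^1}h|^2$, the clean tangential norm $|T|^2=|\grad_{\S^1}\psi|^2/(1+h)^2-A^2$. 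Substituting the already-established relation $\pa_t h=J(h)G/(1+h)$, the convective term becomes $G^2+GA$, and the whole expression collapses into $\tfrac12(G+A)^2-|\grad_{\S^1}\psi|^2/(2(1+h)^2)$, which is exactly \eqref{psi.eq.}.

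The main obstacle I anticipate is precisely this tangential bookkeeping: correctly extracting the normal and tangential components of $\grad\Phi$ from Lemma \ref{lem:formulas} (whose right-hand side mixes $\grad_{\S^1}\psi$, $\grad_{\S^1}h$ and $x$, and whose $\nu_{h}$-component must be checked to cancel) and carrying out the simplification of $|T|^2$ without sign errors. For the reverse implication I would run the argument backwards: given $(h,\psi)$ solving \eqref{h.eq.}-\eqref{psi.eq.}, reconstruct $\Phi$ as the harmonic extension of $\psi$, set $u:=\grad\Phi$ (automatically fulfilling \eqref{div.eq.01} and irrotationality) and define $p$ through the Bernoulli relation with $c(t)=\s_0$; then \eqref{h.eq.} gives back the kinematic condition \eqref{kin.eq.01}, while \eqref{psi.eq.} together with the same computation read in reverse forces $p=\s_0 H(h)$ on $\pa\Om_h$, i.e. \eqref{pressure.eq.01}, and \eqref{dyn.eq.01} follows by taking the gradient of Bernoulli.
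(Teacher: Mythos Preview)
Your proposal is correct and follows essentially the same route as the paper: both proofs derive \eqref{h.eq.} from the kinematic condition via $\la x,\nu_h\ra=(1+h)/J(h)$, then obtain \eqref{psi.eq.} by combining the chain rule $\pa_t\psi=\pa_t\Phi+(\pa_t h)\la\grad\Phi,x\ra$ with Bernoulli, using Lemma~\ref{lem:formulas} to express the tangential part of $\grad\Phi$ and collapsing the result into the perfect square $\tfrac12(G+A)^2$; the converse is handled identically by reconstructing $\Phi$ harmonically and defining $p$ through Bernoulli. Your intermediate identities $\la\grad\Phi,x\ra=G(1+h)/J+\la\grad_{\S^1}\psi,\grad_{\S^1}h\ra/J^2$ and $|T|^2=|\grad_{\S^1}\psi|^2/(1+h)^2-A^2$ are exactly the ones the paper uses (cf.\ \eqref{grad.Phi.gamma.t} and \eqref{norm.grad.tan.Phi}).
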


\begin{proof}
Let $\Om_t\subset\R^2$ and $u\colon\,\Om_t\longmapsto\R^2$ the solution to \eqref{div.eq.01}-\eqref{kin.eq.01} hold. Let us start by getting \eqref{h.eq.}.
As in \eqref{nu.h}, the explicit formula for $\nu_{t}$ is
\begin{equation}\label{normal.Om.t}\nu_{t}\circ\g_t(x)=\frac{(1 + h(t,x)) x - \grad_{\S^1} h(t,x)}{J(h)},\qquad x\in\S^1.\end{equation}
Then, on one hand by \eqref{Om.t.par.}, \eqref{boundary.velocity.Om.t} and \eqref{normal.Om.t} one gets
\begin{equation}\label{boundary.velocity}V_t\circ\g_t=\frac{1+h}{J(h)}\pa_t h,
\end{equation}
and on the other hand, by \eqref{kin.eq.01} it holds that
\begin{equation}\label{def.boundary.vel.}\begin{aligned}V_t\circ\g_t&=\la u(t,\cdot)\circ\g_t,\nu_{t}\circ\g_t \ra = G(h)\psi;
\end{aligned}
\end{equation}
putting together \eqref{boundary.velocity} and \eqref{def.boundary.vel.}, we obtain \eqref{h.eq.}.

Now, we want to get \eqref{psi.eq.}.
By using \eqref{potential.u} we observe that from the equation \eqref{dyn.eq.01} we get the Bernoulli identity \eqref{Ber.law}:
\begin{equation*}\pa_t\Phi + \frac12|\grad\Phi|^2 + p = c(t) \qquad\text{in}\,\Om_t.
\end{equation*}
By continuity this equation still holds on the boundary $\pa\Om_t$, that is,
\begin{equation}\label{Ber.law}\pa_t\Phi + \frac12|\grad\Phi|^2 + \s_0 H_{\Om_t} = c(t) \qquad\text{on}\,\pa\Om_t.
\end{equation}
By choosing $c(t):=\s_0$ we get
\begin{equation}\label{Bernoulli}\pa_t\Phi(t,\cdot)\circ\g_t + \frac12|\grad\Phi(t,\cdot)\circ\g_t|^2 + \s_0 H_{\Om_t} = \s_0 \qquad\text{on}\,\pa\Om_t,
\end{equation}
For all $t\in\R,\,x\in\S^1$ one has
\begin{align}&\pa_t\psi=\pa_t(\Phi(t,\cdot)\circ\g_t)=\pa_t\Phi(t,\cdot)\circ\g_t + \langle\grad\Phi(t,\cdot)\circ\g_t,\pa_t\g_t\rangle, \label{pa.t.Phi}
\\&|\grad\Phi(t,\cdot)\circ\g_t|^2 = |\grad_{\pa\Om_t}\Phi(t,\cdot)\circ\g_t|^2 + |\grad_{\nu_{t}}\Phi(t,\cdot)\circ\g_t|^2. \label{grad.Phi}
\end{align}
By using \eqref{Om.t.par.}, \eqref{h.eq.}, \eqref{normal.gradient}, \eqref{tangential.gradient} and \eqref{eq.1.in.lem:formulas} we observe that
\begin{equation}\label{grad.Phi.gamma.t}\begin{aligned}\langle\grad\Phi(t,\cdot)\circ\g_t,\pa_t\g_t\rangle&=\frac{J(h)}{1+h}G(h)\psi\cdot\{\la\nabla_{\pa\Omega_t}\Phi(t,\cdot)\circ\g_t,x\ra + \la\nabla_{\nu_{t}}\Phi(t,\cdot)\circ\g_t,x\ra\}
\\&=\frac{J(h)}{1+h}G(h)\psi\cdot\Big\{\frac{\la\grad_{\S^1}\psi,\grad_{\S^1}h\ra}{J(h)} + G(h)\psi\cdot\la\nu_{t}\circ\g_t,x\ra\Big\}
\\&=\frac{J(h)}{1+h}G(h)\psi\cdot\Big\{\frac{\la\grad_{\S^1}\psi,\grad_{\S^1}h\ra}{J(h)} + G(h)\psi\cdot\frac{1+h}{J(h)}\Big\}
\\&=\frac{\la\grad_{\S^1}\psi,\grad_{\S^1}h\ra\cdot G(h)\psi}{1+h} + (G(h)\psi)^2
\end{aligned}\end{equation}
in \eqref{grad.Phi}, one gets $\grad_{\nu_{t}}\Phi(t,\cdot)\circ\g_t=(G(h)\psi)\nu_{t}\circ\g_t$ and so 
\begin{equation}\label{norm.grad.nu.Phi}|\grad_{\nu_{t}}\Phi(t,\cdot)\circ\g_t|^2=(G(h)\psi)^2
\end{equation}
while by \eqref{eq.1.in.lem:formulas}, we have
\begin{align}\grad_{\pa\Om_t}\Phi(t,\cdot)\circ\g_t&=\frac{\nabla_{\S^1} \psi}{1+h} + \frac{\la\nabla_{\S^1}\psi, \nabla_{\S^1} h \ra}{(1+h)J(h)}\nu_{t}\circ\gamma_t. \label{grad.tang.Phi}
\end{align}
Therefore,
\begin{equation}\label{norm.grad.tan.Phi}|\grad_{\pa\Om_t}\Phi(t,\cdot)\circ\g_t|^2=\frac{|\grad_{\S^1}\psi|^2}{(1+h)^2} - \frac{|\la\nabla_{\S^1}\psi, \nabla_{\S^1} h \ra|^2}{(1+h)^2\cdot J^2(h)}.
\end{equation}
Using \eqref{pa.t.Phi}, \eqref{grad.Phi}, \eqref{grad.Phi.gamma.t}, \eqref{norm.grad.nu.Phi}, \eqref{grad.tang.Phi} and \eqref{norm.grad.tan.Phi}, \eqref{Bernoulli} becomes
\begin{equation}\begin{aligned}\label{psi.eq.1}\pa_t \psi - &\frac{\la\grad_{\S^1}\psi,\grad_{\S^1}h\ra\cdot G(h)\psi}{1+h} - (G(h)\psi)^2
+\frac12(G(h)\psi)^2 
\\&+ \frac{|\grad_{\S^1}\psi|^2}{2(1+h)^2} - \frac12\cdot\frac{|\la\nabla_{\S^1}\psi, \nabla_{\S^1} h \ra|^2}{(1+h)^2\cdot J^2(h)} + \sigma_0(H(h)-1)=0,
\end{aligned}\end{equation}
from which we get \eqref{psi.eq.}.

Conversely, let us assume that \eqref{h.eq.}, \eqref{psi.eq.} hold. 
with $\mathrm{curl}\, u = 0$. 
Define the set $\Om_t$ be with boundary $\pa\Om_t$ as in \eqref{pa.Om.t}, and $\Phi(t,\cdot)$ in $\Om_t$ 
as the solution of the Laplace problem \eqref{Laplac.problem}.
Then $\Phi(t,\cdot)$ satisfies the incompressibility condition $\Delta \Phi=0$ in $\Om_t$. 
By \eqref{h.eq.} and \eqref{Laplac.problem}, 
equation \eqref{kin.eq.01} is also satisfied. 
From \eqref{psi.eq.}, using \eqref{h.eq.}, we obtain  \eqref{Bernoulli}. 
Now we define $p$ on the closure of $\Om_t$ as 
\begin{equation} \label{def.tilde.p.using.dyn.eq.02}
p := - \pa_t \Phi - \frac12 |\grad \Phi|^2  
\quad \ \text{in } \overline{\Om_t} = \Om_t \cup \pa \Om_t.
\end{equation}
Then the dynamics equation \eqref{Ber.law} in the open domain $\Om_t$ trivially holds. 
From \eqref{def.tilde.p.using.dyn.eq.02} at the boundary $\pa \Om_t$ 
and \eqref{Bernoulli} (which is an identity for points of the boundary $\pa \Om_t$)
we deduce that $p = \s_0 H_{\Om_t}$ on $\pa \Om_t$, that is \eqref{pressure.eq.01}.

\end{proof}

Next, we write the capillary drop equations on the torus $\T^1$:
\begin{theorem} \label{thm:ww.on.torus} In the notations above, the equations \eqref{h.eq.}-\eqref{psi.eq.} can be written on $(t,\th)\in\R\times\T^1$ as \eqref{xi.eq}-\eqref{chi.eq}, that is,
\begin{align*}&\pa_t\xi=e^{-2\xi}[\tilde{G}(\xi)\chi],
\\&\pa_t\chi=e^{-2\xi}\Big[\frac12\Big(\frac{\tilde G(\xi)\chi + \xi'\chi'}{\sqrt{1+\xi'^2}}\Big)^2 - \frac12\chi'^2 + \s_0 e^{\xi}\cdot\Big(\frac{\xi'}{\sqrt{1+\xi'^2}}\Big)'\Big] - \s_0\Big[e^{-\xi}\frac{1}{\sqrt{1+\xi'^2}} - 1\Big].
\end{align*}

\end{theorem}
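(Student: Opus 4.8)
The plan is to change variables from $(h,\psi)$ on $\S^1$ to $(\xi,\chi)$ on $\T^1$ using the already-established geometric dictionary, and then verify that \eqref{h.eq.}-\eqref{psi.eq.} transform term by term into \eqref{xi.eq}-\eqref{chi.eq}. Recall the substitutions defining the torus variables: $\xi(\th)=\log(1+h\circ\phi(0,\th))$ so that $e^\xi=1+h$, and $\chi=\Phi\circ\g_\xi$, i.e.\ $\chi(\th)=\psi\circ\phi(0,\th)$, so that the Dirichlet data agree. The backbone of the computation is Lemma \ref{derivative.change.strip}: identity \eqref{J.th.} gives $J(h)\circ\phi(0,\th)=e^\xi\sqrt{1+\xi'^2}$, identity \eqref{G.into.G.on.T1} gives $G(h)\psi\circ\phi(0,\th)=e^{-\xi}(1+\xi'^2)^{-1/2}\tilde G(\xi)\chi$, identity \eqref{scalar.products.th} gives $\la\grad_{\S^1}\psi,\grad_{\S^1}h\ra\circ\phi(0,\th)=\chi'\cdot(e^\xi)'=\chi'\,e^\xi\xi'$ (using $h=e^\xi-1$, hence $\widetilde{(\mE_0 h)}'=(e^\xi)'$), and identity \eqref{norm.grad.S} gives $|\grad_{\S^1}\psi|^2\circ\phi(0,\th)=\chi'^2$. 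Finally the curvature term is handled by \eqref{mean.curvature}, which already expresses $H(h)$ in terms of $\xi$.

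First I would treat the $\xi$-equation. Since $\xi=\log(1+h)$ we have $\pa_t\xi=\pa_t h/(1+h)=e^{-\xi}\pa_t h$. Substituting \eqref{h.eq.} and using $J(h)/(1+h)=\sqrt{1+\xi'^2}$ together with \eqref{G.into.G.on.T1},
\begin{equation*}
\pa_t\xi=e^{-\xi}\cdot\sqrt{1+\xi'^2}\cdot G(h)\psi
=e^{-\xi}\sqrt{1+\xi'^2}\cdot e^{-\xi}(1+\xi'^2)^{-1/2}\tilde G(\xi)\chi
=e^{-2\xi}\tilde G(\xi)\chi,
\end{equation*}
which is exactly \eqref{xi.eq}. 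This step is essentially a bookkeeping exercise, but it fixes the normalisation constant $e^{-2\xi}$ that must reappear uniformly in the $\chi$-equation.

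Next I would handle the $\chi$-equation, which is where the bulk of the work lies. Because $\chi=\psi\circ\phi(0,\cdot)$ depends on time through $\psi$ alone (the base point is fixed on $\S^1$), one has $\pa_t\chi=\pa_t\psi\circ\phi(0,\cdot)$, so I substitute \eqref{psi.eq.} directly. Inside the first square bracket of \eqref{psi.eq.}, the combination $G(h)\psi+\la\grad_{\S^1}\psi,\grad_{\S^1}h\ra/((1+h)J(h))$ becomes, after inserting the four dictionary identities, $e^{-\xi}(1+\xi'^2)^{-1/2}(\tilde G(\xi)\chi+\xi'\chi')$; squaring and halving produces the term $\tfrac12 e^{-2\xi}(1+\xi'^2)^{-1}(\tilde G(\xi)\chi+\xi'\chi')^2$, matching the first summand of \eqref{chi.eq} once the overall $e^{-2\xi}$ is factored. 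The term $-|\grad_{\S^1}\psi|^2/(2(1+h)^2)$ becomes $-\tfrac12 e^{-2\xi}\chi'^2$, again carrying the correct prefactor. The only piece not automatically proportional to $e^{-2\xi}$ is the capillary term $-\s_0(H(h)-1)$; using \eqref{mean.curvature} to write $H(h)=e^{-\xi}[(1+\xi'^2)^{-1/2}-(\xi'(1+\xi'^2)^{-1/2})']$ and then splitting it as $e^{-2\xi}\cdot\s_0 e^\xi(\xi'/\sqrt{1+\xi'^2})'$ plus the leftover $-\s_0[e^{-\xi}(1+\xi'^2)^{-1/2}-1]$ reproduces exactly the two capillary contributions in \eqref{chi.eq}. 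I expect the main obstacle to be precisely this algebraic reorganisation of the curvature term: one must carefully separate the part that combines with the kinetic terms under the common factor $e^{-2\xi}$ from the genuinely inhomogeneous part $-\s_0[e^{-\xi}(1+\xi'^2)^{-1/2}-1]$, and verify the sign conventions inherited from \eqref{mean.curvature} and from the choice $c(t)=\s_0$ in the Bernoulli law. Once these bookkeeping checks are done, collecting all terms proportional to $e^{-2\xi}$ yields \eqref{chi.eq}, completing the proof.
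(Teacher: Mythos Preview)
Your proposal is correct and follows essentially the same approach as the paper: both proofs compute $\pa_t\xi=e^{-\xi}\pa_t h$ and $\pa_t\chi=\pa_t\psi\circ\phi(0,\cdot)$, then substitute the dictionary identities \eqref{scalar.products.th}, \eqref{norm.grad.S}, \eqref{J.th.}, \eqref{G.into.G.on.T1} from Lemma~\ref{derivative.change.strip} together with the curvature formula \eqref{mean.curvature} to transform \eqref{h.eq.}--\eqref{psi.eq.} term by term. Your write-up is in fact slightly more explicit than the paper's about the intermediate algebra (e.g.\ tracking $\tilde h'=e^\xi\xi'$ and the splitting of the curvature term), but there is no difference in strategy.
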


\begin{proof} 
Let us consider the equations
\begin{align*}& \pa_t h = \frac{\sqrt{(1+h)^2 + |\grad_{\S^1} h|^2}}{1 + h} \, G(h)\psi,
\\
& \pa_t \psi =
 \frac12 \Big( G(h)\psi 
+ \frac{\la \grad_{\S^1} \psi , \grad_{\S^1} h \ra}{(1+h) \sqrt{(1+h)^2 + |\grad_{\S^1} h|^2}} \Big)^2
- \frac{|\grad_{\S^1} \psi|^2}{2(1+h)^2} 
  - \s_0 (H(h)-1),
\end{align*}
Let us transform the equation for $\pa_t h$.
Since for all $x=\phi(0,\th)\in\S^1$ for $\th\in\T^1$, we have $1+h(t,x)=1+h(t,\phi(0,\th))=e^{\xi(t,\th)}$, hence
\begin{equation}\label{pa.t.h.strip}\pa_t h(t,x)=e^{\xi(t,\th)}\pa_t\xi(t,\th).
\end{equation}
Then, by \eqref{pa.t.h.strip}, Lemma \ref{derivative.change.strip}, point $(iii)$, identity \eqref{J.th.}, and Lemma \ref{derivative.change.strip} point $(iv)$, we have
\begin{equation*}e^\xi\pa_t\xi=e^{-\xi}\tilde{G}(\xi)\chi,
\end{equation*}
from which we get \eqref{xi.eq}.

Let us now transform the equation for $\pa_t\psi$.
Since for all $x=\phi(\th,0)\in\S^1$ with $\th\in\T^1$, we have $\psi(t,x)=\psi(t,\cdot)\circ\phi(0,\th)=\chi(t,\th)$ and so by Lemma \ref{derivative.change.strip} point $(iv)$, Lemma \ref{derivative.change.strip}, point $(iii)$, equations \eqref{scalar.products.th}, \eqref{norm.grad.S} and \eqref{J.th.}, and by \eqref{mean.curvature} we have
\begin{align*}\pa_t\chi(t,\th)&=\pa_t\psi(t,\cdot)\circ\phi(0,\th)
\\&=\frac12\Big(e^{-\xi}(1+\xi'^2)^{-\frac12}\tilde G(\xi)\chi + e^{-\xi}(1+\xi'^2)^{-\frac12}\xi'\chi'\Big)^2-e^{-2\xi}\cdot\frac12\chi'^2 
\\&\qquad- \s_0 e^{-\xi}\cdot\Big( \Big(\xi'(1+\xi'^2)^{-\frac12}\Big)' - (\sqrt{1+\xi'^2})^{-\frac12}-e^\xi \Big),
\end{align*}
which is exactly \eqref{chi.eq}.

\end{proof}

\begin{remark} Thanks to Remark \ref{rem}, one can observe that for $\psi$ smooth enough (true if and only if $\chi$ is smooth enough), then equation \eqref{h.eq.}-\eqref{psi.eq.} and \eqref{xi.eq}-\eqref{chi.eq} are equivalent.
\end{remark}

From now on, we consider the notation 
\begin{equation*}G(\xi)\chi:=\tilde G(\xi)\chi.\end{equation*}

\subsection{Hamiltonian structure, symmetries and invariants of motion}

In this section, we prove that the capillary drop equations \eqref{h.eq.}-\eqref{psi.eq.} have a Hamiltonian structure, and then, in the spirit of \cite{BLS}, we employ it to find some conserved quantities. 

A candidate for the Hamiltonian function is the total energy of the drop:
\begin{equation}\mH:=\underbrace{\frac12\int_{\Om_h}|u|^2\,dx}_{=:(I)} + \underbrace{\s_0\int_{\pa\Om_h}1\,d\mH^1}_{=:(II)} - \underbrace{\s_0\int_{\Om_h}1\,dx}_{=:(III)}.
\end{equation}
Let us compute $(I)$ by applying \eqref{potential.u}, \eqref{Laplac.problem}, Divergence Theorem and \eqref{G.into.G.on.T1}:
\begin{align*}(I)
&=\frac12\int_{\Om_h}|\grad\Phi|^2\,dx
\\&=\frac12\int_{\pa\Om_h}\Phi\la\grad\Phi,\nu_h\ra\,d\mH^1
\\&=\frac12\int_{\pa\Om_h}\Phi\,G(h)\psi\,d\mH^1
\\&=\frac12\int_{\S^1}\psi\,G(h)\psi\,J\,dx
\\&=\frac12\int_{\T^1}\chi\,G(\xi)\chi\,d\th
\end{align*}
Let us compute $(II)$:
\begin{align*}(II)
&=\s_0\int_{\S^1}J\,dx = \s_0\int_{\T^1}e^\xi\sqrt{1 + \xi'^2}\,d\th.
\end{align*}
Finally, let us compute $(III)$ by using Divergence Theorem, \eqref{grad.s1.orth.}:
\begin{align*}(III)
&=\frac{\s_0}{2}\int_{\pa\Om_h}\la x,\nu_h\ra\,d\mH^1
\\&=\frac{\s_0}{2}\int_{\S^1}\la(1+h)x,(1+h)x - \grad_{\S^1}h\ra\,dx
\\&=\frac{\s_0}{2}\int_{\S^1}(1+h)^2\,dx
\\&=\frac{\s_0}{2}\int_{\T^1}e^{2\xi}\,d\th.
\end{align*}
Putting all together, we get
\begin{equation*}\mH=\mH(\xi,\chi)=\frac12\int_{\T^1}\chi\, G(\xi)\chi\,d\th + \s_0\int_{\T^1}e^\xi\sqrt{1+\xi'^2}\,d\th - \frac{\s_0}{2}\int_{\T^1}e^{2\xi}\,d\th
\end{equation*}

The Hamiltonian structure of \eqref{h.eq.}-\eqref{psi.eq.} is shown in the following lemma.

\begin{lemma} \label{thm:Hamiltonian.structure}
The following facts hold.

$(i)$ Over any functional $A(\xi,\chi),\,B(\xi,\chi)$ smooth enough, the bilinear map defined by
\begin{equation}\begin{aligned}\label{Poisson.brackets}\{A,B\}(\xi,\chi):&=\la e^{-2\xi}\pa_\xi A(\xi,\chi),\pa_\chi B(\xi,\chi)\ra_{L^2} - \la e^{-2\xi}\pa_\chi A(\xi,\chi),\pa_\xi B(\xi,\chi)\ra_{L^2}
\\&=\la e^{-2\xi}J_0\grad A(\xi,\chi),\grad B(\xi,\chi)\ra_{L^2}
\end{aligned}\end{equation}
are Poisson brackets; here, we are denoting by $\pa_\xi A,\pa_\chi A$ respectively the $L^2-$gradients of $A$ with respect to $\xi$ and $\chi$, and by $\grad A=(\pa_\xi A,\pa_\chi A)$, and the same holds for $B$, and 
\begin{equation}\label{J0}J_0:=\begin{pmatrix}0 & -1 \\ 1 & 0
\end{pmatrix}.
\end{equation}

$(ii)$ The functional \eqref{Hamiltonian}
\begin{equation*}\mathcal{H}(\xi,\chi):=\frac12\int_{\T^1}\chi\, G(\xi)\chi\,d\th + \s_0\int_{\T^1}e^\xi\sqrt{1+\xi'^2}\,d\th - \frac{\s_0}{2}\int_{\T^1}e^{2\xi}\,d\th,
\end{equation*}
is the Hamiltonian of the system \eqref{h.eq.}-\eqref{psi.eq.} with respect to the Poisson brackets \eqref{Poisson.brackets}; in other words, \eqref{h.eq.}-\eqref{psi.eq.} is equivalent to the system
\begin{equation}\label{Hamiltonian.system}\pa_t \xi = e^{-2\xi}\pa_\chi\mathcal{H}(\xi,\chi),\qquad\pa_t\chi = -e^{-2\xi}\pa_\xi\mH(\xi,\chi).
\end{equation}
Moreover, for any $u:=(\xi,\chi)$ one has
\begin{align}&\grad\mH(u)=(\pa_\xi\mH(u),\pa_\chi\mH(u)), \label{grad.H}
\\&\pa_\xi\mH(u)=-\frac12\Big(\frac{G(\xi)\chi + \xi'\chi'}{\sqrt{1+\xi'^2}}\Big)^2 + \frac12\chi'^2 - \s_0\Big(\Big[\frac{\xi'}{\sqrt{1+\xi'^2}}\Big]'-\frac{1}{\sqrt{1+\xi'^2}}+e^{2\xi}\Big), \label{pa.xi.H}
\\&\pa_\chi\mH(u)=G(\xi)\chi,
\label{pa.chi.H}
\end{align}
and $\mH$ is constant in time along the solutions the system \eqref{Hamiltonian.system}.
\end{lemma}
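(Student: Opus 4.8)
The plan is to verify $(i)$ by checking the defining properties of a Poisson bracket, and $(ii)$ by computing the two $L^2$-gradients of $\mH$ and inserting them into \eqref{Hamiltonian.system}. For $(i)$, bilinearity is built into the definition \eqref{Poisson.brackets} and antisymmetry $\{A,B\}=-\{B,A\}$ is immediate from the antisymmetry of $J_0$ (equivalently, by swapping the two inner products); the Leibniz rule $\{A,BC\}=B\{A,C\}+C\{A,B\}$ is inherited from the product rule $\pa_\xi(BC)=(\pa_\xi B)C+B\,\pa_\xi C$ for the gradients. The only substantial point is the Jacobi identity. I would deduce it from a Darboux-type reduction: since \eqref{Poisson.brackets} gives $\{\xi(\th),\chi(\th')\}=e^{-2\xi(\th)}\d(\th-\th')$ and $\{\xi,\xi\}=\{\chi,\chi\}=0$, setting $P:=\tfrac12 e^{2\xi}$ (and keeping $\chi$) one obtains $\{P(\th),\chi(\th')\}=\d(\th-\th')$, so the bracket is the pullback of the canonical one under the pointwise diffeomorphism $\xi\mapsto\tfrac12 e^{2\xi}$; as the canonical bracket obeys Jacobi and the identity is preserved under changes of variables, \eqref{Poisson.brackets} is Poisson. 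Alternatively, the Poisson tensor $e^{-2\xi}J_0$ is ultralocal with a two-dimensional target, so its Schouten bracket, being a $3$-vector on a $2$-dimensional fibre, vanishes identically.

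For $(ii)$, only the kinetic term $\tfrac12\int_{\T^1}\chi\,G(\xi)\chi\,d\th$ depends on $\chi$; varying it and using the $L^2$-self-adjointness of $G(\xi)$ (Lemma \ref{lemma:G}$(ii)$) gives $\pa_\chi\mH=G(\xi)\chi$, which is \eqref{pa.chi.H}. The gradient $\pa_\xi\mH$ is then computed term by term. For the kinetic term I would differentiate $\tfrac12\int_{\T^1}\chi\,G(\xi)\chi\,d\th$ along $\hat\xi$ and substitute the shape-derivative identity \eqref{shape.derivative}, $d(G(\xi)\hat\xi)\chi=-G(\xi)[B\hat\xi]-(V\hat\xi)'$, with $B,V$ as in \eqref{B.V.}; using self-adjointness of $G(\xi)$ and integrating $(V\hat\xi)'$ by parts, this contribution collapses to $\tfrac12\chi'^2-\tfrac12\big(\tfrac{G(\xi)\chi+\xi'\chi'}{\sqrt{1+\xi'^2}}\big)^2$. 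For the capillary term I would take the Euler--Lagrange derivative of $\s_0\int_{\T^1}e^\xi\sqrt{1+\xi'^2}\,d\th$, namely $\s_0 e^\xi\big[\tfrac{1}{\sqrt{1+\xi'^2}}-\big(\tfrac{\xi'}{\sqrt{1+\xi'^2}}\big)'\big]$, which equals $\s_0 e^{2\xi}H(h)$ by \eqref{mean.curvature}; and the volume term $-\tfrac{\s_0}{2}\int_{\T^1}e^{2\xi}\,d\th$ contributes the elementary $-\s_0 e^{2\xi}$. Summing the three yields $\pa_\xi\mH$ as in \eqref{pa.xi.H}.

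Finally, inserting \eqref{pa.xi.H}--\eqref{pa.chi.H} into $\pa_t\xi=e^{-2\xi}\pa_\chi\mH$, $\pa_t\chi=-e^{-2\xi}\pa_\xi\mH$ reproduces \eqref{xi.eq}--\eqref{chi.eq} once the factors $e^{-2\xi}$ are combined with the curvature terms, establishing the equivalence; conservation of $\mH$ along solutions is then formal, $\tfrac{d}{dt}\mH=\la\pa_\xi\mH,\pa_t\xi\ra_{L^2}+\la\pa_\chi\mH,\pa_t\chi\ra_{L^2}=\{\mH,\mH\}=0$ by antisymmetry. I expect the kinetic shape-derivative step to be the main obstacle: it is the only place requiring the full strength of Lemma \ref{lemma:G} (self-adjointness together with \eqref{shape.derivative}) and a careful integration by parts, and it is where the characteristic water-wave combination $\tfrac12\chi'^2-\tfrac12\big(\tfrac{G(\xi)\chi+\xi'\chi'}{\sqrt{1+\xi'^2}}\big)^2$ emerges.
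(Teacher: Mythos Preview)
Your proposal is correct and follows essentially the same route as the paper: the paper disposes of $(i)$ with the single line ``straightforward computations'' (your Darboux/Schouten argument supplies what the paper omits), and for $(ii)$ it computes $\pa_\chi\mH$ via the symmetry of $G(\xi)$ and $\pa_\xi\mH$ via the shape derivative \eqref{shape.derivative} plus integration by parts, exactly as you outline. One small remark: your Euler--Lagrange computation for the capillary term correctly produces the factor $e^\xi$ in front of the curvature expression, so when you write ``summing the three yields $\pa_\xi\mH$ as in \eqref{pa.xi.H}'' you should note that \eqref{pa.xi.H} as printed is missing those $e^\xi$ factors (the paper's own derivation has them, and they are needed for $-e^{-2\xi}\pa_\xi\mH$ to reproduce \eqref{chi.eq}).
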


\begin{proof} $(i)$ Straightforward computations.

$(ii)$ Let us start by proving the first identity in \eqref{Hamiltonian.system}. For any $(\xi,\chi)$ and for any direction $\hat{\chi}$ corresponding to the second variable in $\mH$, we have
\begin{align*}d\mH(\xi,\chi)\hat\chi=\int_{\T^1} \hat\chi\cdot G(\xi)\chi\,d\th,
\end{align*}
so the $L^2-$gradient of $\mH$ with respect to the $\chi$ variable is
\begin{equation*}\pa_\chi\mH(\xi,\chi)= G(\xi)\chi.
\end{equation*}
Let us prove now \eqref{pa.xi.H}.
For any $(\xi,\chi)$ and for any direction $\hat{\xi}$ corresponding to the first variable in $\mH$, by Lemma \ref{lemma:G}, equations \eqref{shape.derivative} and \eqref{B.V.}, and by some integration by parts we get
\begin{align*}d\mH(\xi,\chi)\hat\xi&=\frac12\int_{\T^1}\chi(d G(\xi)[\hat\xi])\chi\,d\th + \s_0\int_{\T^1}\Big[e^\xi\hat\xi\sqrt{1+\xi'^2} + \frac{e^\xi\xi'\hat\xi'}{\sqrt{1+\xi'^2}}-e^{2\xi}\hat\xi\Big]\,d\th
\\&=\frac12\int_{\T^1}\chi(-G(\xi)[B\hat{\xi}]-(V\hat\xi)') \,d\th 
\\&\qquad+ \s_0\int_{\T^1}\Big(-e^{2\xi}+e^\xi\sqrt{1+\xi'^2}-\Big(\frac{e^\xi\xi'}{\sqrt{1+\xi'^2}} \Big)' \Big)\hat\xi\,d\th
\\&=\frac12\int_{\T^1}-(B\cdot G(\xi)\chi-V\chi')\hat\xi \,d\th 
\\&\qquad+ \s_0\int_{\T^1}\Big(-e^{2\xi}+e^\xi\sqrt{1+\xi'^2}-e^\xi\frac{\xi'^2}{\sqrt{1+\xi'^2}}-e^\xi\Big( \frac{\xi'}{\sqrt{1+\xi'^2}}\Big)'\Big)\hat\xi\,d\th
\\&=\frac12\int_{\T^1}-(B\cdot G(\xi)\chi-V\chi')\hat\xi \,d\th 
\\&\qquad+ \s_0\int_{\T^1}\Big(-e^{2\xi}+e^\xi\frac{1}{\sqrt{1+\xi'^2}}-e^\xi\Big( \frac{\xi'}{\sqrt{1+\xi'^2}}\Big)'\Big)\hat\xi\,d\th
\end{align*}
from which we get
\begin{align*}\pa_\xi\mH(\xi,\chi)&=-\frac12 B\cdot G(\xi)\chi + \frac12 V\chi' - \s_0\Big(\Big[\frac{\xi'}{\sqrt{1+\xi'^2}}\Big]'-\frac{1}{\sqrt{1+\xi'^2}}+e^{2\xi}\Big)
\\&=-\frac12\Big[\frac{(G(\xi)\chi)^2+G(\xi)\chi\cdot\xi'\chi'}{1+\xi'^2}-\chi'^2+\frac{G(\xi)\chi\cdot\chi'\xi'+\xi'^2\chi'^2}{1+\xi'^2}\Big]
\\&\qquad\qquad- \s_0\Big(\Big[\frac{\xi'}{\sqrt{1+\xi'^2}}\Big]'-\frac{1}{\sqrt{1+\xi'^2}}+e^{2\xi}\Big)
\\&=-\frac12\Big(\frac{G(\xi)\chi + \xi'\chi'}{\sqrt{1+\xi'^2}}\Big)^2 + \frac12\chi'^2 - \s_0\Big(\Big[\frac{\xi'}{\sqrt{1+\xi'^2}}\Big]'-\frac{1}{\sqrt{1+\xi'^2}}+e^{2\xi}\Big).
\end{align*}

\end{proof}

Thanks to the Hamiltonian structure, we can derive constants of motion from symmetries.
To start with, let us derive the conservation of the angular momentum from translation invariance:
\begin{lemma}\label{lemma:rotation}
Let $\{\mT_\alpha\colon\,\alpha\in\T^1\}$ be the one-parameter group of translation operators, whose action on each $(\xi,\chi)$ is defined as
\begin{equation}\label{translation.op}(\mT_\alpha(\xi,\chi))(\th):=(\xi(\th+\alpha),\chi(\th+\alpha)),\qquad\forall\th\in\T^1.
\end{equation}
Then,

$(i)$ the infinitesimal generator $\frac{d}{d\alpha}\big|_{\alpha=0}\mT_\alpha$ of the one-parameter group $(\mT_\alpha)_{\alpha\in\T^{1}}$ is the derivative operator $\pa_\th$, which commutes with $\mT_\alpha$, that is, 
\begin{equation}\label{D.T.commutation}\mT_\alpha\circ \pa_\th=\pa_\th\circ\mT_\alpha;
\end{equation}
$(ii)$ the Hamiltonian $\mH$ is $\mT_\alpha-$invariant and $\grad\mH$ is $\mT_\alpha-$equivariant, that is 
\begin{equation}\label{H.trans.inv}\mH\circ\mT_\alpha=\mH,\qquad\grad\mH\circ\mT_\alpha = \mT_\alpha\circ\grad\mH;
\end{equation}
$(iii)$ the vector field $(-e^{2\xi}\chi',e^{2\xi}\xi')$ is the $L^2-$gradient of the angular momentum \eqref{def.mI}
\begin{equation*}\mI(\xi,\chi):=-\frac12\int_{\T^1}e^{2\xi}\chi'\,d\th=\int_{\T^1}e^{2\xi}\xi'\chi\,d\th,\end{equation*}
which is a conserved quantity along the equations \eqref{Hamiltonian.system}, that is,
\begin{equation}\label{I.commutes.H}\{\mI,\mH\}=0;
\end{equation}
moreover, $\mI$ is $\mT_\alpha-$invariant and $\grad\mI$ is $\mT_\alpha-$equivariant, that is,
\begin{equation}\label{I.equiv}\mI\circ\mT_\alpha=\mI,\qquad\grad\mI\circ\mT_\alpha=\mT_\alpha\circ\grad\mI.
\end{equation}
    
\end{lemma}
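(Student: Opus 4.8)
The plan is to prove the three parts in order, writing $u=(\xi,\chi)$ throughout, and to rely only on the translation invariance of the $\T^1$-integral, the commutation of $\mT_\a$ with $\pa_\th$, the translation invariance of $G$ from Lemma \ref{lemma:G}$(v)$, and a Noether-type argument relating the symmetry $\mT_\a$ to the conserved quantity $\mI$.

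For $(i)$, I would differentiate the definition \eqref{translation.op} directly: since $\frac{d}{d\a}\big|_{\a=0}(\xi(\th+\a),\chi(\th+\a))=(\xi'(\th),\chi'(\th))$, the infinitesimal generator is exactly $\pa_\th$ acting componentwise. The commutation \eqref{D.T.commutation} is then immediate, since $\pa_\th(\mT_\a f)(\th)=f'(\th+\a)=(\mT_\a f')(\th)$ for every smooth $f$.

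For $(ii)$, the invariance $\mH\circ\mT_\a=\mH$ is checked term by term in \eqref{Hamiltonian}. For the kinetic term I would use Lemma \ref{lemma:G}$(v)$, $\mT_\a(G(\xi)\chi)=G(\mT_\a\xi)[\mT_\a\chi]$, together with $\int_{\T^1}(\mT_\a f)\,d\th=\int_{\T^1}f\,d\th$, to write $\frac12\int_{\T^1}(\mT_\a\chi)\,G(\mT_\a\xi)(\mT_\a\chi)\,d\th=\frac12\int_{\T^1}\mT_\a(\chi\,G(\xi)\chi)\,d\th=\frac12\int_{\T^1}\chi\,G(\xi)\chi\,d\th$; the capillary and volume terms are handled identically using $(\mT_\a\xi)'=\mT_\a(\xi')$. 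For the equivariance of $\grad\mH$ I would differentiate the invariance identity: for every direction $\hat u$, $d\mH(\mT_\a u)[\mT_\a\hat u]=d\mH(u)[\hat u]$, i.e. $\la\grad\mH(\mT_\a u),\mT_\a\hat u\ra_{L^2}=\la\grad\mH(u),\hat u\ra_{L^2}$. Since $\mT_\a$ is an $L^2$-isometry with adjoint $\mT_\a^*=\mT_{-\a}$, the left-hand side equals $\la\mT_{-\a}\grad\mH(\mT_\a u),\hat u\ra_{L^2}$, and as $\hat u$ is arbitrary we obtain $\grad\mH(\mT_\a u)=\mT_\a\grad\mH(u)$, which is \eqref{H.trans.inv}.

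For $(iii)$, I would first compute the $L^2$-gradient of $\mI$ from \eqref{def.mI} by integration by parts. Taking the variation in $\chi$ and integrating by parts (the boundary term vanishing by periodicity) gives $\int_{\T^1}e^{2\xi}\xi'\hat\chi\,d\th$, so $\pa_\chi\mI=e^{2\xi}\xi'$; the variation in $\xi$ gives directly $\pa_\xi\mI=-e^{2\xi}\chi'$, whence $\grad\mI=(-e^{2\xi}\chi',e^{2\xi}\xi')$ as claimed, and the same integration by parts shows that the two expressions for $\mI$ in \eqref{def.mI} agree. The decisive observation is then that $\mI$ is the momentum map of the translation action: a direct matrix computation with $J_0$ from \eqref{J0} gives $e^{-2\xi}J_0\grad\mI=(-\xi',-\chi')=-\pa_\th u$, the negative of the generator found in $(i)$. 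Consequently $\{\mI,\mH\}=\la e^{-2\xi}J_0\grad\mI,\grad\mH\ra_{L^2}=-\la\pa_\th u,\grad\mH(u)\ra_{L^2}=-\frac{d}{d\a}\big|_{\a=0}\mH(\mT_\a u)=0$ by the invariance proved in $(ii)$, which is \eqref{I.commutes.H}; conservation along \eqref{Hamiltonian.system} then follows from $\frac{d}{dt}\mI(u(t))=\{\mI,\mH\}=0$. Finally, the $\mT_\a$-invariance of $\mI$ and the equivariance of $\grad\mI$ in \eqref{I.equiv} follow exactly as in $(ii)$: one has $\mI(\mT_\a u)=-\frac12\int_{\T^1}\mT_\a(e^{2\xi}\chi')\,d\th=\mI(u)$, while the explicit formula for $\grad\mI$ manifestly satisfies $\grad\mI(\mT_\a u)=\mT_\a\grad\mI(u)$ since $\mT_\a$ commutes with pointwise products and with $\pa_\th$.

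The computations are all routine; the only genuinely structural step is the identity $e^{-2\xi}J_0\grad\mI=-\pa_\th u$, which identifies $\mI$ as the generator (momentum map) of the translation symmetry. Once this is in hand, $\{\mI,\mH\}=0$ is a one-line consequence of the invariance of $\mH$ rather than a direct bracket computation, and every remaining invariance and equivariance statement reduces to the translation invariance of the $\T^1$-integral and of $G$.
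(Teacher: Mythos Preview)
Your proposal is correct and follows essentially the same route as the paper's proof: for $(ii)$ you use Lemma~\ref{lemma:G}$(v)$ and the translation invariance of the integral, then differentiate the invariance and use $\mT_\alpha^*=\mT_{-\alpha}$ to get the equivariance; for $(iii)$ you compute $\grad\mI$ directly and derive $\{\mI,\mH\}=0$ from the Noether-type identity $e^{-2\xi}J_0\grad\mI=-\pa_\th u$ combined with the invariance of $\mH$, which is exactly the paper's argument (the paper phrases the same computation as $\la\grad\mH(u),u'\ra_{L^2}=\{\mH,\mI\}(u)$). Your additional remark that this identifies $\mI$ as the momentum map of the $\mT_\alpha$-action is a nice conceptual clarification but does not change the argument.
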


\begin{proof} 

$(i)$ It is immediate to check. 

$(ii)$ The first identity in \eqref{H.trans.inv} follows by using Lemma \ref{lemma:G}, equation \eqref{G.translation.inv}, and by the change of variable $\beta=\th+\alpha$.

As for the second identity, for any $\alpha\in\T^1$, $u=(\xi,\chi)$ and $\hat u=(\hat\xi,\hat\chi)$, one has
\begin{equation*}d\mH(\mT_\alpha u)[\mT_\alpha\hat u]=d\mH(u)[\hat u],
\end{equation*}
which means
\begin{equation*}\la\grad\mH(\mT_\alpha u),\mT_\alpha\hat u\ra_{L^2}=\la\grad\mH(u),\hat u\ra,
\end{equation*}
but since $\la\grad\mH(\mT_\alpha u),\mT_\alpha\hat u\ra_{L^2} = \la\mT_\alpha^*\grad\mH(\mT_\alpha u),\hat u\ra_{L^2}$, where $\mT_{\alpha}^*$ is the $L^2-$adjoint operator associated to $\mT_{\alpha}$, then 
\begin{equation*}\mT_\alpha^*\circ\grad\mH\circ\mT_\alpha=\grad\mH.
\end{equation*}
By the change of variable $\beta=\th+\alpha$, one can observe that $\la\mT_\alpha f,g\ra_{L^2}=\la f,\mT_{-\alpha}g\ra=\la f,\mT_\alpha^{-1}g\ra$ for any $f,g\in L^2$, so $\mT_\alpha^*=\mT_\alpha^{-1}$, from which we get the second identity in \eqref{H.trans.inv}.

$(iii)$ It is immediate to check that $\grad\mI=(-e^{2\xi}\chi', e^{2\xi}\xi')$. By \eqref{H.trans.inv}, $\frac{d}{d\alpha}\mH(\mT_\alpha u)=0$ for any $u=(\xi,\chi)$, from which we get
\begin{equation*}0=\la\grad\mH(u),\frac{d}{d\alpha}\Big|_{\alpha=0}T_\alpha u\ra_{L^2}=\la\grad\mH(u),u'\ra_{L^2}=\la e^{-2\xi}J_0\grad\mH(u),\grad\mI(u)\ra_{L^2}=\{\mH,\mI\}(u).
\end{equation*}
The identities in \eqref{I.equiv} can be obtained exactly as \eqref{H.trans.inv}.

\end{proof}

Let us now derive the conservation of the fluid mass from translation invariance of the boundary velocity potential $\chi$:
\begin{lemma}\label{lemma:shift.mass}
For any $a\in\R$, let $S_a$ be the shift operator:
\begin{equation}\label{shift.op}S_a(\xi,\chi):=(\xi,\chi+a).
\end{equation}
Then,

$(i)$ the infinitesimal generator $\frac{d}{da}\big|_{a=0}S_a$ of the one-parameter group $\{S_a\colon\,a\in\R\}$ is the constant map $(h,\psi)\longmapsto(0,1)$;

$(ii)$ the Hamiltonian $\mH$ in \eqref{Hamiltonian} and its $L^2-$gradient are $S_a-$invariant, that is,
\begin{equation}\label{H.is.Sa.invariant}\mH\circ S_a=\mH,\qquad\grad\mH\circ S_a = \grad\mH;
\end{equation}

$(iii)$ the vector field $(e^{2\xi},0)$ is the $L^2-$gradient of the drop mass
\begin{equation}\label{mass}\mM(\xi):=\frac12\int_{\T^1}e^{2\xi}\,d\s,
\end{equation}
which is a conserved quantity along the equations \eqref{Hamiltonian.system}, that is,
\begin{equation}\label{M.is.conserved}\{\mM,\mH\}=0.
\end{equation}
Moreover, $\mM$ and $\grad\mM$ are both $S_a-$invariant and $\mT_\alpha-$equivariant (see \eqref{translation.op}), that is,
\begin{align}&\mM\circ S_a = \mM,\qquad\qquad\qquad\mM\circ\mT_\alpha=\mM, \label{M.is.invariant}
\\&\grad\mM\circ S_a= S_a \circ\grad\mM,\qquad\grad\mM\circ\mT_\alpha = \mT_\alpha\circ\grad\mM \label{grad.M.is equiv.}
\end{align}

\end{lemma}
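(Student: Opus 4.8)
The plan is to follow the structure of Lemma \ref{lemma:rotation}, with the only substantive analytic input being the properties of the Dirichlet--Neumann operator in Lemma \ref{lemma:G}, namely the $L^2$-symmetry \eqref{G.is.symmetric} and the fact \eqref{G.is.zero.on.constants} that constants lie in $\ker G(\xi)$. Part $(i)$ is immediate: since $S_a(\xi,\chi)=(\xi,\chi+a)$ is affine in $a$, one has $\frac{d}{da}\big|_{a=0}S_a(\xi,\chi)=(0,1)$. For the invariance $\mH\circ S_a=\mH$ in $(ii)$, the capillary and volume terms depend only on $\xi$ and are thus unchanged, while for the kinetic term I would expand the integrand
\begin{equation*}(\chi+a)\,G(\xi)(\chi+a)=\chi\,G(\xi)\chi + a\,G(\xi)\chi + \chi\,G(\xi)a + a\,G(\xi)a,\end{equation*}
drop the last two terms via \eqref{G.is.zero.on.constants} (so $G(\xi)a=0$ pointwise), and use \eqref{G.is.symmetric} together with \eqref{G.is.zero.on.constants} to get $\int_{\T^1}a\,G(\xi)\chi\,d\th=a\la G(\xi)1,\chi\ra_{L^2}=0$. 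The gradient invariance $\grad\mH\circ S_a=\grad\mH$ then reads off the explicit formulas \eqref{pa.xi.H}--\eqref{pa.chi.H}: both components involve $\chi$ only through $\chi'$ and $G(\xi)\chi$, and under $S_a$ one has $(\chi+a)'=\chi'$ and $G(\xi)(\chi+a)=G(\xi)\chi$.

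For $(iii)$, differentiating $\mM$ in \eqref{mass} gives $d\mM(\xi)[\hat\xi]=\int_{\T^1}e^{2\xi}\hat\xi\,d\th$, hence $\grad\mM=(e^{2\xi},0)$. The key observation is that, by \eqref{J0}, this is exactly the Hamiltonian vector field of the symmetry generator, namely $e^{-2\xi}J_0\grad\mM=(0,1)$. The conservation then follows by the Noether correspondence:
\begin{equation*}\{\mM,\mH\}(u)=\la e^{-2\xi}J_0\grad\mM,\grad\mH\ra_{L^2}=\la(0,1),\grad\mH\ra_{L^2}=\frac{d}{da}\Big|_{a=0}\mH(S_a u)=0,\end{equation*}
the last step being the invariance from $(ii)$; equivalently, one checks directly $\la(0,1),\grad\mH\ra_{L^2}=\int_{\T^1}G(\xi)\chi\,d\th=\la G(\xi)\chi,1\ra_{L^2}=0$ again by \eqref{G.is.symmetric}--\eqref{G.is.zero.on.constants}.

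The remaining invariances and equivariances are routine. Since $\mM$ depends only on $\xi$, one has $\mM\circ S_a=\mM$ trivially, and $\mM\circ\mT_\alpha=\mM$ follows from the change of variable $\beta=\th+\alpha$ exactly as in Lemma \ref{lemma:rotation}; the gradient $\grad\mM=(e^{2\xi},0)$ is $\chi$-independent, so it is $S_a$-invariant, while its $\mT_\alpha$-equivariance follows because $\mT_\alpha$ commutes with the pointwise exponential and is an $L^2$-isometry. I do not expect a genuine obstacle: the entire content reduces to the symmetry and kernel structure of $G(\xi)$, and the only point requiring care is that $S_a$ is affine rather than linear, so the equivariance $\grad\mM\circ S_a=S_a\circ\grad\mM$ must be read at the tangent level, where $dS_a=\mathrm{Id}$, making both sides equal to $\grad\mM$.
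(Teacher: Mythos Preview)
Your proposal is correct and follows essentially the same approach as the paper: both reduce everything to the $L^2$-symmetry \eqref{G.is.symmetric} and the kernel property \eqref{G.is.zero.on.constants} of $G(\xi)$, and both read off the gradient invariance from the explicit formulas \eqref{pa.xi.H}--\eqref{pa.chi.H}. Your treatment is in fact more detailed than the paper's (which dismisses most steps as ``immediate''), and you correctly flag the subtlety in the formula $\grad\mM\circ S_a = S_a\circ\grad\mM$: taken literally this would give $(e^{2\xi},0)=(e^{2\xi},a)$, so it only makes sense at the tangent level via $dS_a=\mathrm{Id}$---a point the paper glosses over.
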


\begin{proof}

$(i)$ It is immediate to check.

$(ii)$ The first identity can be obtained by using Lemma \ref{lemma:G}, equations \eqref{G.is.symmetric} and \eqref{G.is.zero.on.constants}, while the second one follows from observing that $\grad\mH(u)$ depends on $\chi$ only through its derivatives.

$(iii)$ It is immediate to check that $\grad\mM(u)=(e^{2\xi},0)$, so by using \eqref{G.is.symmetric} and \eqref{G.is.zero.on.constants} one has that for any $u=(\xi,\chi)$,
\begin{equation*}\{\mM,\mH\}(u)=\la 1,G(\xi)\chi\ra_{L^2}=0,
\end{equation*}
so we get \eqref{M.is.conserved}. All the identities in \eqref{M.is.invariant} and \eqref{grad.M.is equiv.} can be immediately checked.

\end{proof}

Now, we show the reversibility property of the found constants of motions:

\begin{lemma}\label{lemma:reversibility} Let $\mR$ be the reversibility operator, that is, for all $\xi,\chi$ and for all $\th\in\T^1$,
\begin{equation}\label{revers.op.}(\mR(\xi,\chi))(\th):=(\xi(-\th),-\chi(-\th)).
\end{equation}
Then, the Hamiltonian $\mH$, the angular momentum $\mI$ and the mass $\mM$ are $\mR-$invariant and their $L^2-$gradients are $\mR-$equivariant, that is,
\begin{align}&\mH\circ\mR=\mH,\qquad\qquad\qquad\mI\circ\mR=\mI,\qquad\quad\qquad\mM\circ\mR=\mM, \label{R.inv.}
\\&\grad\mH\circ\mR=\mR\circ\grad\mH,\qquad\grad\mI\circ\mR=\mR\circ\grad\mI,\qquad\grad\mM\circ\mR=\mR\circ\grad\mM \label{R.equiv}
\end{align}

\end{lemma}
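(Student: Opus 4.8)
The plan is to treat the three invariance statements in \eqref{R.inv.} first and then deduce the three equivariance statements in \eqref{R.equiv} from them by a single abstract argument, exactly as was done for $\mT_\alpha$ in Lemma \ref{lemma:rotation}$(ii)$. The starting observation is that $\mR$ acts block-diagonally as $\mR(\xi,\chi)=(\iota\xi,-\iota\chi)$, where $\iota$ is the sign-inversion operator $(\iota f)(\th)=f(-\th)$ already appearing in Lemma \ref{lemma:G}$(iv)$.

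For the invariances I would first record the two elementary rules $(\iota f)'=-\iota(f')$ (chain rule) and $\int_{\T^1}\iota f\,d\th=\int_{\T^1}f\,d\th$ (change of variable $\th\mapsto-\th$, using periodicity), together with $\iota(fg)=(\iota f)(\iota g)$ and $\iota(e^{f})=e^{\iota f}$. Then each functional is checked term by term. For $\mM$ and for the capillary and volume terms of $\mH$ the computation is immediate, since $(\iota\xi)'^2=\iota(\xi'^2)$, so the integrands are simply pulled back by $\iota$ and the integral is unchanged. For $\mI$ the two sign flips combine favourably: $(-\iota\chi)'=\iota(\chi')$, hence $e^{2\iota\xi}(-\iota\chi)'=\iota(e^{2\xi}\chi')$ and the integral is preserved. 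The only genuinely nontrivial term is the kinetic part $\frac12\int_{\T^1}\chi\,G(\xi)\chi\,d\th$ of $\mH$: the substitution $(\xi,\chi)\mapsto(\iota\xi,-\iota\chi)$ produces $\frac12\int_{\T^1}(\iota\chi)\,G(\iota\xi)[\iota\chi]\,d\th$, the two sign flips cancelling because the term is quadratic in $\chi$; then Lemma \ref{lemma:G}$(iv)$, namely $G(\iota\xi)[\iota\chi]=\iota(G(\xi)\chi)$, reduces the integrand to $\iota(\chi\,G(\xi)\chi)$, whose integral equals the original one. This establishes \eqref{R.inv.}.

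For the equivariances I would argue abstractly. First I would verify that $\mR$ is an isometric involution on $(L^2)^2$: since $\iota$ is an isometric involution on $L^2$ with $\iota^*=\iota=\iota^{-1}$, the block-diagonal operator $\mR=\mathrm{diag}(\iota,-\iota)$ satisfies $\mR^2=\mathrm{Id}$ and $\mR^*=\mR$, hence $\mR^*=\mR=\mR^{-1}$. Given any functional $F\in\{\mH,\mI,\mM\}$ with $F\circ\mR=F$, differentiating along an arbitrary direction $\hat u$ gives $\la\grad F(\mR u),\mR\hat u\ra_{L^2}=\la\grad F(u),\hat u\ra_{L^2}$, and inserting $\mR^*=\mR^{-1}$ yields $\mR^{-1}\grad F(\mR u)=\grad F(u)$, that is $\grad F\circ\mR=\mR\circ\grad F$. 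This is precisely \eqref{R.equiv}; alternatively one could verify it directly from the explicit gradients $\grad\mI=(-e^{2\xi}\chi',e^{2\xi}\xi')$, $\grad\mM=(e^{2\xi},0)$ and \eqref{pa.xi.H}--\eqref{pa.chi.H}, again using Lemma \ref{lemma:G}$(iv)$ for the $G(\xi)\chi$ contributions.

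The main obstacle is isolated in the kinetic term of $\mH$, where one must control the Dirichlet--Neumann operator under sign inversion; everything else reduces to the pullback identity $\int_{\T^1}\iota f\,d\th=\int_{\T^1}f\,d\th$ and the derivative rule $(\iota f)'=-\iota(f')$. That obstacle is dissolved by the reversibility property of $G$ recorded in Lemma \ref{lemma:G}$(iv)$, which is exactly what makes the two sign flips in the second component conspire to leave $\mH$ invariant.
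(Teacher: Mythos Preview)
Your proposal is correct and follows essentially the same approach as the paper: the invariances are obtained from the change of variable $\th\mapsto-\th$ together with the reversibility property \eqref{G.inversion} of the Dirichlet--Neumann operator, and the equivariances are deduced abstractly by differentiating the invariance and using $\mR^*=\mR$. Your write-up is simply more explicit than the paper's, spelling out the term-by-term verification and the block-diagonal structure $\mR=\mathrm{diag}(\iota,-\iota)$.
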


\begin{proof} 

The identities in \eqref{R.inv.} follows by applying \eqref{G.inversion} and the change of variable $\beta=-\th$.

As for the second identities, one has that for any $u$ and for any direction $\hat u$,
\begin{equation*}d\mH(\mR u)[\mR\hat u]=d\mH(u)\hat u,
\end{equation*}
that is,
\begin{equation*}\mR^*\circ\grad\mH\circ\mR =\grad\mH.
\end{equation*}
Now, since for any $u,v\in L^2$, by the change of variable $\beta=-\xi$ we get
\begin{equation*}\la\mR u,v\ra_{L^2}=\la u,\mR v\ra_{L^2},
\end{equation*}
then $\mR^*=\mR$, from which we get the second identity in \eqref{R.equiv}.

\end{proof}

\section{Rotating waves}

In this section, we want to find solutions of \eqref{Hamiltonian.system} of the form
\begin{equation}\label{Rotating.waves}\xi(t,\th):=\eta(\th+\om t),\qquad\chi(t,\th):=\beta(\th+\om t),
\end{equation}
where $\xi,\eta\colon\,\T^1\longmapsto\R$, $t\in\R$, $\th\in\T^1$ and $\om\in\R$. We call these solutions \emph{rotating waves}. One of them is the \emph{static circle solution} $(h,\psi)=(0,0)$.

The main results of this section are the following theorems. The first concernes the existence of rotating waves which are not symmetric \emph{a priori} and close to the static circle.
\begin{theorem} \label{thm:main} 
Let $\mathfrak{s}\ge0,\,s_0>0$, let $\om_*$ be defined in \eqref{deg.freq.}. 
Let $s \ge s_0 > 1$.
Then there exist $a_0 > 0$, $C > 0$ such that, for every $a \in (0, a_0)$, 
there exists one orbit
\begin{equation}  \label{n.orbits}
\{ (\eta_a , \beta_a )\circ\mT_\alpha : \alpha \in \T^1 \}
\end{equation}
of solutions of the rotating traveling wave equations \eqref{rotating.wave.equation.0}
with angular velocity $\om = \om_a$, 
angular momentum 
\begin{equation} \label{ang.mom.a}
\mI(\eta_a , \beta_a) = a,
\end{equation}
and 
\begin{equation} \label{est.sol}
|\om_a - \om_*| + \| \eta_a \|_{H^{s+\frac32}} + \| \beta_a \|_{H^{s+1}} 
\leq C \sqrt{a}.
\end{equation}
Moreover, the orbit depends analytically on $a$ in the interval $(0, a_0)$. 
\end{theorem}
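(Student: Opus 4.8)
The plan is to realize the rotating waves as constrained critical points of the energy and to let the torus symmetry govern the finite-dimensional core of a Lyapunov--Schmidt reduction. By Lemma \ref{lemma:variational.structure.rw} the system \eqref{rw.1}-\eqref{rw.2} is the Euler--Lagrange equation $\grad\mH=\om\,\grad\mI$ of $\mH$ on the level set $\{\mI=a\}$, with $\om$ the Lagrange multiplier. Since by Lemma \ref{lemma:shift.mass} both $\mH$ and $\mI$ are invariant under $\chi\mapsto\chi+\mathrm{const}$ (recall $G(\xi)$ annihilates constants, Lemma \ref{lemma:G}), I first quotient out this gauge by working on the subspace of zero-average $\chi$. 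I then linearize $\grad\mH-\om\,\grad\mI$ at the static circle $(\eta,\beta)=(0,0)$, setting $L:=\mathrm{Hess}(\mH-\om_*\mI)(0,0)$: using $G(0)=|D|$ and the linearization of the curvature term from \eqref{mean.curvature}, the $n$-th Fourier mode is degenerate exactly when $\om^2=\s_0(n-n^{-1})$, which for the critical value $\om_*$ of \eqref{deg.freq.} singles out $n=c$. As $n\mapsto n-n^{-1}$ is strictly increasing no other mode resonates, so the kernel $V_0:=\ker L$ is the two-dimensional space spanned by $v_1=(\cos c\th,-\om_*\sin c\th)$ and $v_2=(\sin c\th,\om_*\cos c\th)$, and $L$ is invertible on the $L^2$-orthogonal complement $W$.

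Next I carry out the reduction in the ambient space $X=H^{s+3/2}\times H^{s+1}$ (or its analytic analogue $H^{\mathfrak{s},\cdot}$ when $\mathfrak{s}>0$). I split the unknown as $v+w$ with $v\in V_0$, $w\in W$, write $\om=\om_*+\mu$, and project $\grad\mE=0$ via $P\colon X\to V_0$ and $Q=I-P$. The range equation $Q\,\grad\mE(v+w,\mu)=0$ is solved for $w=w(v,\mu)$ by the analytic implicit function theorem: its linearization is the invertible $L|_W$, and the nonlinearity is analytic thanks to the analyticity and tame bounds for $G(\xi)$ in Lemma \ref{lemma:G}, parts $(vii)$ and $(ix)$, valid in the Sobolev scale ($\mathfrak{s}=0$) and the analytic scale ($\mathfrak{s}>0$) alike. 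This yields $w(v,\mu)=O(|v|^2+|\mu|\,|v|)$, analytic in $(v,\mu)$. The crucial structural point is that the splitting $V_0\oplus W$ is $\mT_\alpha$-invariant while $\grad\mE$ is $\mT_\alpha$-equivariant by Lemma \ref{lemma:rotation}; hence $w$ is $\mT_\alpha$-equivariant and the reduced functional $\Phi(v,\mu):=\mE(v+w(v,\mu),\om_*+\mu)$ is $\mT_\alpha$-invariant.

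Now the symmetry does the work. On $V_0\cong\R^2$ the action $\mT_\alpha$ is the rotation by $c\alpha$, so the $\mT_\alpha$-invariant $\Phi$ and the reduced momentum $\mI_{\mathrm{red}}(r)$ depend only on $r:=|v|$. The reduced constrained problem thus collapses to two scalar equations in $(r,\om)$: the criticality relation $\partial_r\mH_{\mathrm{red}}(r)=\om\,\partial_r\mI_{\mathrm{red}}(r)$ and the constraint $\mI_{\mathrm{red}}(r)=a$. Using $\mI_{\mathrm{red}}(r)=c\,\om_*\pi\,r^2+O(r^4)$ and $\mH_{\mathrm{red}}(r)=\s_0(c^2-1)\pi\,r^2+O(r^4)$, together with $\om_*^2=\s_0(c^2-1)/c$ forcing the leading orders to cancel, the implicit function theorem gives a unique small solution $r=r(a)\sim\sqrt a$, $\om=\om_a=\om_*+O(a)$. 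Since both equations depend only on $r=|v|$, every point of the circle $\{|v|=r(a)\}$ (lifted to $v+w$) is a constrained critical point, so the solution set is exactly one $\mT_\alpha$-orbit. This is precisely where the \emph{variational} structure is indispensable: a merely $SO(2)$-equivariant field on $\R^2$ may carry a rotational term $g(r)J_0v$ obstructing solvability, whereas $\grad_v\Phi=\Phi_r(r)\,\widehat v$ cannot.

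Finally I read off the conclusions. Since $v=O(r)=O(\sqrt a)$ and $w=O(a)$, one obtains $\|\eta_a\|_{H^{s+3/2}}+\|\beta_a\|_{H^{s+1}}\le C\sqrt a$ and $|\om_a-\om_*|=O(a)\le C\sqrt a$, which is \eqref{est.sol}. For the analytic dependence on $a$, parametrize by $r$: the $\Z_2$-symmetry $v\mapsto -v=\mT_{\pi/c}v$ makes $a=\mI_{\mathrm{red}}(r)$ an even analytic function with $\partial_{r^2}\mI_{\mathrm{red}}(0)=c\om_*\pi\ne0$, so $r^2$ is analytic in $a$; on the open interval $(0,a_0)$ the map $a\mapsto r=\sqrt{r^2}$ together with the analytic dependence of $(w,\om)$ on $v$ renders $(\eta_a,\beta_a,\om_a)$ analytic in $a$. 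I expect the main obstacle to be technical rather than conceptual: tracking the half-derivative losses of $G(\xi)$ through the implicit-function step so that the reduction closes in the stated spaces (and their analytic counterparts), and verifying the strict monotonicity $\partial_r\mI_{\mathrm{red}}(r)\ne0$ underlying both the uniqueness of the radius $r(a)$ and the well-posedness of the multiplier $\om_a$.
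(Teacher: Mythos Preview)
Your approach is correct and shares the same skeleton as the paper's proof --- Lyapunov--Schmidt reduction, $\mT_\alpha$-equivariance of the resolved $w$, and the variational structure to kill the tangential component of the bifurcation equation --- but the way you finish is genuinely different. The paper does \emph{not} reduce to scalar equations in $(r,\om)$. Instead it first fixes $\om=\om(v)$ by solving an auxiliary scalar equation $F(\om,v):=\la \Pi_{Z_N}\mF,\grad\mI\ra=0$ (Lemma \ref{lemma:choice.of.omega}), then shows the reduced constraint $\mS_N(a)=\{\mI_N=a\}$ is diffeomorphic to a circle (Lemma \ref{constraint.top.desc.}), and finally invokes $\T^1$-equivariant critical point theory in the style of Moser--Weinstein and Mawhin--Willem to produce one orbit of constrained critical points; a separate computation shows the Lagrange multiplier vanishes, so these are free critical points of $\mE_N$. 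Uniqueness then comes from the orbit filling the whole circle.

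Your route is more elementary and more explicit: you exploit immediately that $\mT_\alpha$ acts as rotation by $\ell_*\alpha$ on the two-dimensional kernel, so every $\mT_\alpha$-invariant quantity is radial and the reduced problem collapses to two scalar equations solvable by the ordinary implicit function theorem. This avoids the abstract index-theoretic existence step entirely and even yields the sharper bound $|\om_a-\om_*|=O(a)$ via the $\Z_2$-evenness you note at the end. What the paper's approach buys is robustness: the Moser--Weinstein/Craig--Nicholls machinery would continue to give the correct orbit count (via the $\T^1$-index) if the kernel were higher-dimensional with several non-resonant frequencies, whereas your radial collapse hinges on the action being transitive on circles in a two-dimensional $V_0$. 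Both proofs rely on the variational structure at exactly the same point --- to rule out a rotational component $g(r)J_0v$ in the reduced equation --- and both need the $\mT_\alpha$-equivariance of $w$ to transport the symmetry to the reduced level.
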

In Section 4.4, we also provide the existence of symmetric rotating waves close to the static circle. To this term, let us introduce the space of the fixed points for the reversibility operator $\mR$
\begin{equation}\label{R.fix.point}\begin{aligned}E:&=\{u=(\eta,\beta)\in(L^2)^2\colon\,\mR u=u\}
\\&=\{u=(\eta,\beta)\in(L^2)^2\colon\,(\eta(\th),\beta(\th))=(\eta(-\th),-\beta(-\th)),\,\forall\th\in\T^1\},
\end{aligned}\end{equation}
and the space of functions with $c-$fold symmetry:
\begin{equation}\label{Sobolev.c.fold}\begin{aligned}L_c^2:&=\Big\{f\in L^2\colon\,\mT_{\frac{2\pi}{c}} f=f\Big\}
\\&=\Big\{f\in L^2\colon\,f\Big(\th+\frac{2\pi}{c}\Big)=f(\th),\quad\forall\th\in\T^1\Big\}\end{aligned}\end{equation}

\begin{theorem} \label{thm:bif}

In the same regularity assumptions as in Theorem \ref{thm:main}, the following facts hold.

\medskip

$(i)$ (Solutions even in $\eta$, odd in $\beta$)

Let $\om_*$ be defined in \eqref{deg.freq.}. 
Then, given $E$ as in \eqref{R.fix.point} and $V_E, W_E$ as in \eqref{V.E}, one has that $\om_*$ is a bifurcation point from a simple eigenvalue, and the set of the solutions of \eqref{rotating.wave.equation.0} restricted to $E$ close to $(\om_*,0)$ in $\R\times E$ is a unique analytic curve parametrized on the $1-$dimensional space $V_E$.

$(ii)$ (Solutions with $c-$fold symmetry)

Let $c\ge 2$ be an integer.
Then, there exist $a_0 > 0$, $C > 0$ such that, for every $a \in (0, a_0)$, 
there exists one orbit 
\begin{equation}\{(\eta_a,\beta_a)\circ\mT_\alpha\colon\,\alpha\in\T^1\}
\end{equation}
of solutions of the rotating traveling wave equations \eqref{rotating.wave.equation.0} which belong to $L_c^2\times L_c^2$, have angular velocity $\om = \om_a$, 
angular momentum
\begin{equation}\mI(\eta_a , \beta_a) = a,\end{equation}
and satisfy
\begin{equation*} 
|\om_a - \om_*| + \| \eta_a \|_{H^{s+\frac32}} + \| \beta_a \|_{H^{s+1}} 
\leq C \sqrt{a}.
\end{equation*}
Moreover, the solutions depend analytically on $a$ in the interval $(0, a_0)$.

$(iii)$ (Solutions even in $\eta$, odd in $\beta$, with $c-$fold symmetry)

Let $c\ge2$ be an integer, let $\om_*$ be defined in \eqref{freq.c.fold}. 
Then, given $E$ as in \eqref{R.fix.point}, $L_c^2$ as in \eqref{Sobolev.c.fold} and $V_E,\,W_E$ as in \eqref{V.E}, one has that $\om_*$ is a bifurcation point from a simple eigenvalue, and the set of the solutions of \eqref{rotating.wave.equation.0} restricted to $E$ close to $(\om_*,0)$ in $\R\times E$ is a unique analytic curve parametrized on the $1-$dimensional space $V_E\cap(L_c^2\times L_c^2)$.

\end{theorem}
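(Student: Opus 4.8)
The plan is to handle the three points with two tools, both applied to the map
$F(\om,u):=\grad\mH(u)-\om\,\grad\mI(u)$, $u=(\eta,\beta)$, whose zeros are exactly the rotating waves \eqref{rotating.wave.equation.0} by Lemma \ref{lemma:variational.structure.rw}, and for which $u=0$ is a trivial branch, since $\grad\mH(0)=\grad\mI(0)=0$. By Lemma \ref{lemma:G}, point $(vii)$ (or $(ix)$ in the analytic class), $F$ is analytic between the relevant Sobolev scales, e.g. $H^{s+\frac32}\times H^{s+1}\to H^{s-\frac12}\times H^{s}$. By the $\mR$-equivariance \eqref{R.equiv} of $\grad\mH,\grad\mI$ and the $\mT_\alpha$-equivariance \eqref{F.equiv.}, the map $F$ preserves the reversible subspace $E$ of \eqref{R.fix.point} and the $c$-fold subspace $(L_c^2)^2$ of \eqref{Sobolev.c.fold}; I would work inside these restricted spaces so that domain and codomain coincide with the symmetric slice.

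For points $(i)$ and $(iii)$ I would invoke the Crandall-Rabinowitz theorem \cite{Crandall.Rabinowitz}. First I compute the linearization $\mL_\om:=d_uF(\om,0)=\mathrm{Hess}\,\mH(0)-\om\,\mathrm{Hess}\,\mI(0)$; from \eqref{pa.xi.H}-\eqref{pa.chi.H} and \eqref{def.mI} one finds, on the Fourier mode $e^{ik\th}$, the symbol $\begin{pmatrix}\s_0(k^2-1) & i\om k\\ -i\om k & |k|\end{pmatrix}$, whose determinant $\s_0(k^2-1)|k|-\om^2k^2$ vanishes exactly on the dispersion relation $\om^2=\s_0(k^2-1)/|k|$; this fixes $\om_*$ as in \eqref{deg.freq.} (resp. \eqref{freq.c.fold} with $k=c$) and, since $(k^2-1)/|k|$ is strictly increasing for $k\ge1$, isolates a single resonant $|k|=k_0\ge2$. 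The full kernel $V$ of \eqref{kernel.L.} is three-dimensional: the constant shift $(0,1)$ coming from the mass symmetry (Lemma \ref{lemma:shift.mass}, using $G(0)\,c=0$ from Lemma \ref{lemma:G}$(iii)$) together with the two real modes at $k=\pm k_0$. The decisive point is that the reversibility constraint removes the shift direction (as $(0,1)\notin E$) and selects the single combination with $\eta$ even and $\beta$ odd, so that $V_E$ in \eqref{V.E} is one-dimensional, $V_E=\mathrm{span}\{v_0\}$ with $v_0=(\cos k_0\th,\,b_0\sin k_0\th)$ and $b_0=\s_0(k_0^2-1)/(\om_* k_0)\neq0$; in point $(iii)$ one intersects further with $(L_c^2)^2$, and for $k_0=c$ the dimension is still one. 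Because $\mL_{\om_*}$ is $L^2$-symmetric and invertible on every mode except $k_0$ (where its symbol has rank one), its range in the symmetric space is closed of codimension one; the transversality condition reduces to $\la\mathrm{Hess}\,\mI(0)v_0,v_0\ra\neq0$, which I would verify directly to equal $-2\pi b_0 k_0\neq0$ (equivalently $\pa_\om\det\mL_\om|_{k_0}=-2\om_* k_0^2\neq0$), holding precisely because $\om_*\neq0$ for $k_0\ge2$. Crandall-Rabinowitz then yields the unique analytic bifurcating curve parametrized by $V_E$, resp. $V_E\cap(L_c^2)^2$.

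For point $(ii)$ I would rerun the variational argument of Theorem \ref{thm:main} inside the invariant space $(L_c^2)^2$. The $\mT_\alpha$-equivariance \eqref{F.equiv.} and the conservation of $\mI$ survive the $c$-fold restriction, since $\mT_\alpha$ commutes with $\mT_{2\pi/c}$; the degenerate mode is now $k=c$ with the associated $\om_*$, the residual kernel again splits into the shift plus the two rotational directions, and the Moser-Weinstein/Craig-Nicholls scheme \cite{Weinstein, Moser, Craig.Nicholls} produces, for each small $a>0$, a single $\mT_\alpha$-orbit of critical points of $\mE=\mH-\om(\mI-a)$ on $\{\mI=a\}\cap(L_c^2)^2$, with $\om=\om_a$ and the bounds \eqref{est.sol}; analyticity in $a$ follows exactly as in Theorem \ref{thm:main}.

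The main obstacle lies in points $(i)$ and $(iii)$: showing that the symmetry reduction genuinely collapses the three-dimensional kernel $V$ to the one-dimensional $V_E$, and then checking the Fredholm, codimension-one and transversality hypotheses of Crandall-Rabinowitz in the functional setting where the principal symbol couples the order-one Dirichlet-Neumann operator $G(\xi)$ with the order-two capillary term. Once this linear analysis and the analyticity of $F$ from Lemma \ref{lemma:G} are secured, the nonlinear conclusions in all three points reduce to an application of the cited abstract theorems.
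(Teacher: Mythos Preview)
Your proposal is correct and follows essentially the same route as the paper: Crandall--Rabinowitz applied on the reversible subspace $E$ (and on $E\cap(L_c^2)^2$) for points $(i)$ and $(iii)$, and a rerun of the Moser--Weinstein/Craig--Nicholls variational argument of Theorem~\ref{thm:main} inside $(L_c^2)^2$ for point $(ii)$. The only cosmetic difference is that the paper phrases the transversality condition as $\pa_\th\mathtt v_1\notin R$ (Lemma~\ref{transv.lemma}) rather than as $\la\mathrm{Hess}\,\mI(0)v_0,v_0\ra\neq 0$, but since $\mL_{\om_*}$ is $L^2$-symmetric these are equivalent, and both rely on $\om_*\neq 0$, i.e.\ $\ell_*\ge 2$, exactly as you note.
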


\begin{remark}\label{remark.Moon} While Theorem \ref{thm:main} and Theorem \ref{thm:bif}, points $(i)-(ii)$ are new results, the point $(iii)$ is partially the result proved in \cite{Moon.Wu}: the authors actually provided a global bifurcation result, while ours is a local one, since we require $\mF$ to be analytic close to the static circle solution $(\eta,\beta)=(0,0)$ and to this term, we need the condition $\|\eta\|<\delta_0<\infty$ which prevents us from applying the Rabinowitz Global Bifurcation Theorem, see Lemma \ref{lemma:G} and the definition of $U$ in Lemma \ref{ww.rotating.wave.eq}. 

\end{remark}

\subsection{The rotating wave equations}

Now, we derive the rotating wave equations:

\begin{lemma}\label{ww.rotating.wave.eq} Let us consider $\xi(t,\th),\chi(t,\th)$, $t\in\R,\th\in\T^1$, satisfying the ansatz \eqref{Rotating.waves}, let $\mathfrak{s}>0,\,s_0, s \in \R$, $s \geq s_0 > 1$, and let 
\[
U := \{ u = (\eta, \beta) : 
\eta \in H^{\mathfrak{s},s+\frac32}, \ \ 
\beta \in H^{\mathfrak{s},s+1}, \ \ 
\| \eta \|_{H^{\mathfrak{s},s_0 + 1}} < \delta_0 \},
\]
where $\delta_0$ is the constant in Lemma \ref{lemma:G}, point $(vii)$.
If we denote by
\begin{align} 
&\mF_0(\om; u) := (\mF_{1,0}(\om; u) , \mF_{2,0}(\om; u)),
\label{def.mF}
\\
&\mF_{1,0}(\om; u) := 
\om\eta' 
- e^{-2\eta}G(\eta) \beta,
\label{def.mF.1} 
\\
&\mF_{2,0}(\om; u) := 
\om\beta' 
- e^{-2\eta}\frac12 \Big( \frac{G(\eta) \beta + \eta'\beta'}{ 
\sqrt{1+\eta'^2}} \Big)^2
+ \frac12 e^{-2\eta}\beta'^2 
+ \s_0\Big(e^{-\eta}\Big[\frac{\eta'}{\sqrt{1+\eta'^2}} \Big]' - \frac{e^{-\eta}}{\sqrt{1+\eta'^2}} + 1  \Big), 
\label{def.mF.2} 
\end{align} 
then $\mF_{1,0}(\om; u) \in H^{\mathfrak{s},s}$, 
$\mF_{2,0}(\om; u) \in H^{\mathfrak{s},s-\frac12}$ 
for all $u \in U$, $\om \in \R$, the map 
\[
\mF : \R \times U \to H^{\mathfrak{s},s} \times H^{\mathfrak{s},s-\frac12}
\]
is analytic and if $u(t,\th)=(\eta(\th+\om t),\beta(\th+\om t))$ solves the system \eqref{Hamiltonian.system}, then 
\begin{equation}\label{rotating.wave.equation.0}\mF_0(\om;\eta,\beta)=(0,0).\end{equation}

\end{lemma}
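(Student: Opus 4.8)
The statement bundles three claims: the mapping property $\mF_0\colon\R\times U\to H^{\mathfrak{s},s}\times H^{\mathfrak{s},s-\frac12}$, the joint analyticity of this map, and the fact that the rotating-wave ansatz \eqref{Rotating.waves} reduces the evolution \eqref{xi.eq}-\eqref{chi.eq} (equivalently the Hamiltonian system \eqref{Hamiltonian.system}) to $\mF_0(\om;\eta,\beta)=(0,0)$. The last point is the conceptual heart but is computationally the easiest; the first two are bookkeeping built on Lemma \ref{lemma:G}, point $(ix)$, together with the Banach-algebra structure of the spaces $H^{\mathfrak{s},s}$.

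For the reduction I would substitute $\xi(t,\th)=\eta(\th+\om t)$, $\chi(t,\th)=\beta(\th+\om t)$ directly into \eqref{xi.eq}-\eqref{chi.eq}. Writing $\zeta:=\th+\om t$, one has $\pa_t\xi(t,\th)=\om\eta'(\zeta)$ and $\xi'(t,\th)=\eta'(\zeta)$, and likewise for $\chi$; the only nonlocal term is $G(\xi(t,\cdot))\chi(t,\cdot)$, which by the translation invariance \eqref{G.translation.inv} (with $\alpha=\om t$) equals $\mT_{\om t}\big(G(\eta)\beta\big)$, so that $\big[G(\xi(t,\cdot))\chi(t,\cdot)\big](\th)=\big(G(\eta)\beta\big)(\zeta)$. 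After these substitutions every term of \eqref{xi.eq}-\eqref{chi.eq} is a function of $\zeta$ alone; since $(t,\th)\mapsto\zeta$ is surjective onto $\T^1$, the two equations hold for all $(t,\th)$ if and only if they hold as identities in $\zeta\in\T^1$. Matching the resulting identities against the definitions \eqref{def.mF.1}-\eqref{def.mF.2} gives exactly $\mF_{1,0}(\om;\eta,\beta)=0$ and $\mF_{2,0}(\om;\eta,\beta)=0$.

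For the mapping property I would count derivatives term by term, using that $H^{\mathfrak{s},s}$ is a Banach algebra for $s\ge s_0>1$ and $\mathfrak{s}>0$, that composition with a real-analytic function (here $\eta\mapsto e^{-2\eta}$, $e^{-\eta}$, and $\eta'\mapsto(1+\eta'^2)^{-1/2}$) maps $H^{\mathfrak{s},\sigma}$ analytically into itself, and that $G(\eta)\colon H^{\mathfrak{s},\sigma}\to H^{\mathfrak{s},\sigma-1}$ by Lemma \ref{lemma:G}, point $(ix)$ (the smallness hypothesis $\|\eta\|_{H^{\mathfrak{s},s_0+1}}<\delta_0$ built into $U$ being exactly what makes $G(\eta)$ available and analytic). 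Since $\beta\in H^{\mathfrak{s},s+1}$ one gets $G(\eta)\beta\in H^{\mathfrak{s},s}$, so $\mF_{1,0}\in H^{\mathfrak{s},s}$ together with $\eta'\in H^{\mathfrak{s},s+\frac12}$. In $\mF_{2,0}$ the term $\om\beta'$ and the quadratic terms all land in $H^{\mathfrak{s},s}$, but the curvature term $e^{-\eta}\big[\eta'(1+\eta'^2)^{-1/2}\big]'$ carries a second derivative of $\eta\in H^{\mathfrak{s},s+\frac32}$ and hence only lies in $H^{\mathfrak{s},s-\frac12}$; this is precisely the term that fixes the target regularity and explains the asymmetric domain exponents $s+\frac32$ for $\eta$ and $s+1$ for $\beta$.

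The analyticity of $\mF$ then follows by assembling these pieces: $\om$ enters affinely, the Nemytskii operators built from the analytic functions above are analytic on $H^{\mathfrak{s},\sigma}$, the products are bounded bilinear (hence analytic) maps by the algebra property, and $\eta\mapsto G(\eta)$ is analytic into $\mathcal{L}(H^{\mathfrak{s},s},H^{\mathfrak{s},s-1})$ by Lemma \ref{lemma:G}, point $(ix)$; finite sums, products and compositions of analytic maps between Banach spaces are analytic. The main obstacle I anticipate is not any single estimate but the careful tracking of the regularity budget, so that each factor lands in a space where the algebra and composition lemmas genuinely apply, and in particular verifying that the half-derivative loss in the curvature term is consistent with the stated codomain $H^{\mathfrak{s},s-\frac12}$ and is not compounded by a further loss elsewhere in $\mF_{2,0}$.
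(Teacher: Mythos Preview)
Your proof is correct and follows essentially the same approach as the paper, just with considerably more detail: the paper's own proof is a two-line sketch that simply invokes Lemma \ref{lemma:G} for the analyticity and performs the same substitution-plus-translation-invariance argument for the reduction. Your citation of point $(ix)$ (analyticity in the $H^{\mathfrak{s},s}$ scale) is in fact more precise than the paper's reference to point $(vii)$, and your term-by-term regularity accounting, in particular the identification of the curvature term as the bottleneck forcing the $H^{\mathfrak{s},s-\frac12}$ codomain, makes explicit what the paper leaves implicit.
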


\begin{proof} The analiticity of $\mF$ follows by Lemma \ref{lemma:G}, point $(vii)$. If we suppose that $u(t,\th)=(\eta(\th+\om t),\beta(\th+\om t))$ solves \eqref{Hamiltonian.system}, then $\pa_t\eta(\th+\om t)=\om\eta'(\th+\om t)$, $\pa_t\beta(\th+\om t)=\om\beta'(\th+\om t)$ and by \eqref{G.translation.inv}, $G(\eta((\cdot)+\om t))[\beta((\cdot)+\om t)]=(G(\eta)\beta)(\th+\om t)$, from which necessarily \eqref{rotating.wave.equation.0} holds.
\end{proof}

We observe now that the equation \eqref{rotating.wave.equation.0} is equivalent to a critical point equation of a suitable functional.
\begin{lemma}\label{lemma:variational.structure.rw}
Given the assumptions of Lemma \ref{ww.rotating.wave.eq}, we have that $u=(\eta,\beta)$ solves $\mF_0(\om,u)=0$ if and only if $u$ is a constrained critical point for $\mH$ under the constraint $\{u\in U\colon\,\mI(u)=a\}$ for some $a\in\R$, or equivalently, $u$ is a free critical point for the functional \eqref{lagrangian.functional}, which is
\begin{align*}\mE(\om;u):=\mH(u)-\om(\mI(u)-a).\end{align*}

\end{lemma}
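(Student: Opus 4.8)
The plan is to work entirely with the $L^2$-gradients and the Hamiltonian structure established in Lemma \ref{thm:Hamiltonian.structure} and Lemma \ref{lemma:rotation}, reducing the statement to the Lagrange multiplier principle. The starting observation is that the generator $\pa_\th$ of the torus action is precisely the Hamiltonian vector field of $\mI$: using $\grad\mI(u)=(-e^{2\xi}\chi',e^{2\xi}\xi')$ from Lemma \ref{lemma:rotation}, point $(iii)$, and the Poisson structure \eqref{Poisson.brackets}, a direct computation gives
\[
\big(e^{-2\xi}\pa_\chi\mI(u),\,-e^{-2\xi}\pa_\xi\mI(u)\big)=(\xi',\chi')=u'.
\]

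First I would insert the rotating wave ansatz \eqref{Rotating.waves} into the Hamiltonian system \eqref{Hamiltonian.system}. Writing $u=(\eta,\beta)$ and using $\pa_t u=\om u'$, the system becomes $\om u'=\big(e^{-2\eta}\pa_\chi\mH(u),\,-e^{-2\eta}\pa_\xi\mH(u)\big)$. Replacing $\om u'$ by $\om\big(e^{-2\eta}\pa_\chi\mI(u),-e^{-2\eta}\pa_\xi\mI(u)\big)$ via the identity above and collecting terms, the equation $\mF_0(\om;u)=0$ of Lemma \ref{ww.rotating.wave.eq} is equivalent to
\[
\big(e^{-2\eta}\pa_\chi(\mH-\om\mI)(u),\,-e^{-2\eta}\pa_\xi(\mH-\om\mI)(u)\big)=(0,0).
\]
Since $e^{-2\eta}>0$ pointwise and $J_0$ in \eqref{J0} is invertible, this vector vanishes if and only if $\grad(\mH-\om\mI)(u)=0$. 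Because the $u$-gradient of $\mE(\om;u)=\mH(u)-\om(\mI(u)-a)$ equals $\grad\mH(u)-\om\grad\mI(u)=\grad(\mH-\om\mI)(u)$ (the additive constant $\om a$ being irrelevant), this shows that $\mF_0(\om;u)=0$ is equivalent to $u$ being a free critical point of $\mE(\om;\cdot)$ for the given value of $\om$.

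It then remains to match free critical points of $\mE$ with constrained critical points of $\mH$ via the Lagrange multiplier theorem. Setting $a:=\mI(u)$, I would note that $\grad\mI(u)=(-e^{2\xi}\chi',e^{2\xi}\xi')$ vanishes only when $u$ is constant, in which case $\mI(u)=0$; hence for $a\neq0$ the level set $\{\mI=a\}$ is a regular $C^1$ submanifold near $u$. On such a set, $u$ is a critical point of $\mH$ restricted to $\{\mI=a\}$ if and only if $\grad\mH(u)$ is parallel to $\grad\mI(u)$, i.e. $\grad\mH(u)=\om\grad\mI(u)$ for some multiplier $\om\in\R$, which is exactly the free critical point equation for $\mE(\om;\cdot)$ obtained above. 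Reading the chain of equivalences in both directions yields the claim.

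The main obstacle is the rigorous use of the Lagrange multiplier principle in the infinite-dimensional setting: one must verify that $\mH$ and $\mI$ are of class $C^1$ on the spaces of Lemma \ref{ww.rotating.wave.eq}, which follows from the analyticity and mapping properties of $G(\xi)$ in Lemma \ref{lemma:G}, points $(vi)$ and $(vii)$, together with the explicit gradient formulas \eqref{pa.xi.H} and \eqref{pa.chi.H}, and that the constraint is non-degenerate, namely that $\grad\mI$ does not vanish on the relevant level set. Once these regularity points are in place, the remaining steps are the purely algebraic manipulations of the $L^2$-gradients indicated above.
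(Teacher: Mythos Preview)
Your proposal is correct and follows essentially the same route as the paper: both arguments reduce $\mF_0(\om;u)=0$ to $\grad(\mH-\om\mI)(u)=0$ by cancelling the nonvanishing factor $e^{-2\eta}$ and the invertible matrix $J_0$. The paper's proof is the one-line observation that, by the explicit formulas \eqref{pa.xi.H}--\eqref{pa.chi.H} for $\grad\mH$ and the formula for $\grad\mI$ in Lemma \ref{lemma:rotation}$(iii)$, one has $\grad\mE(\om;u)=0$ if and only if $e^{2\eta}\mF_0(\om;u)=0$; you obtain the same identity by first recognising $u'=e^{-2\eta}J_0\grad\mI(u)$ and then substituting, which is a slightly more structural rephrasing of the same computation. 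Your additional discussion of the infinite-dimensional Lagrange multiplier principle and of the regularity of the constraint is not addressed in the paper's proof (which treats the constrained-critical-point formulation as synonymous with $\grad\mH=\om\grad\mI$), so in that respect your argument is more careful than what appears in the paper.
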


\begin{proof} It is enough to recall Theorem \eqref{thm:Hamiltonian.structure}, point $(ii)$, equations \eqref{grad.H}-\eqref{pa.chi.H}, and Lemma \ref{lemma:rotation}, point $(iii)$: indeed, one finds out that $\grad\mE(\om;u)=0$ if and only if $e^{2\eta}\mF_0(\om;u)=0$.
    
\end{proof}

From now on, we will consider the nonlinear operator \begin{align}&\mF(\om;u):=\grad\mE(\om;u)=(\mF_1(\om;u),\mF_2(\om;u)),\label{F.op.}
\\&\mF_1(\om;u):=-\frac12 \Big( \frac{G(\eta) \beta + \eta'\beta'}{ 
\sqrt{1+\eta'^2}} \Big)^2
+ \frac12\beta'^2 
- \s_0\Big(e^{\eta}\Big[\frac{\eta'}{\sqrt{1+\eta'^2}} \Big]' - \frac{e^{\eta}}{\sqrt{1+\eta'^2}}+e^{2\eta}  \Big) + \om e^{2\eta}\beta',      \label{F.1}
\\&\mF_2(\om;u) := G(\eta)\beta - \om e^{2\eta}\eta' \label{F.2}
\end{align}
instead of $\mF_0$, so we will look for the zeros of $\mF$.

Let us notice that since the boundary velocity potential $\chi$ is defined up to constants, then the image of $\mF$ does not contain the subspace of $L^2\times L^2$ generated by $u_0:=(0,1)$. More precisely, by \eqref{I.commutes.H} and \eqref{M.is.conserved} we directly get the following lemma.
\begin{lemma}\label{lemma:v00.orth.property} 
In the assumptions of Lemma \ref{ww.rotating.wave.eq}, one has that for all $(\om,u)\in\R\times U$,
\begin{equation}\label{v00.orth.}\la(0,1),\mF(\om;u)\ra_{(L^2)^2}=0.
\end{equation}
\end{lemma}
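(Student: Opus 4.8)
The plan is to read off the result from the Hamiltonian structure instead of expanding $\mF$ componentwise. By the definition \eqref{F.op.} and Lemma \ref{lemma:variational.structure.rw}, $\mF(\om;u)=\grad\mE(\om;u)=\grad\mH(u)-\om\,\grad\mI(u)$, so that pairing with $(0,1)$ just extracts the mean of the second component of $\grad\mH-\om\grad\mI$. The key remark is the algebraic identity $(0,1)=e^{-2\eta}J_0\grad\mM(u)$ for $u=(\eta,\beta)$: indeed $\grad\mM(u)=(e^{2\eta},0)$ by Lemma \ref{lemma:shift.mass}, point $(iii)$, and with $J_0$ as in \eqref{J0} one has $J_0(e^{2\eta},0)=(0,e^{2\eta})$, whose product with $e^{-2\eta}$ is $(0,1)$.

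Given this, I would insert it into the Poisson bracket \eqref{Poisson.brackets} to obtain
\begin{equation*}
\la(0,1),\mF(\om;u)\ra_{(L^2)^2}
=\la e^{-2\eta}J_0\grad\mM,\grad\mH\ra_{L^2}-\om\la e^{-2\eta}J_0\grad\mM,\grad\mI\ra_{L^2}
=\{\mM,\mH\}(u)-\om\,\{\mM,\mI\}(u).
\end{equation*}
The first bracket is zero by \eqref{M.is.conserved}. For the second, I would note that $\{\mM,\mI\}(u)=\frac{d}{d\alpha}\big|_{\alpha=0}\mM(\mT_\alpha u)=0$, because $\mM$ is $\mT_\alpha$-invariant by \eqref{M.is.invariant} and $\mI$ generates the translation flow, exactly the computation carried out for $\{\mH,\mI\}$ in the proof of Lemma \ref{lemma:rotation}, point $(iii)$; equivalently and most directly, $\{\mM,\mI\}(u)=\int_{\T^1}e^{2\eta}\eta'\,d\th=\tfrac12\int_{\T^1}(e^{2\eta})'\,d\th=0$ by periodicity. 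Either way both terms vanish, giving \eqref{v00.orth.}.

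As a consistency check I would also run the bare-hands computation: by \eqref{F.2},
\begin{equation*}
\la(0,1),\mF(\om;u)\ra_{(L^2)^2}=\int_{\T^1}\big(G(\eta)\beta-\om e^{2\eta}\eta'\big)\,d\th,
\end{equation*}
whose second term is again a total derivative and whose first term equals $\la G(\eta)\beta,1\ra_{L^2}=\la\beta,G(\eta)1\ra_{L^2}=0$ by the $L^2$-symmetry and the vanishing of $G(\eta)$ on constants, i.e. Lemma \ref{lemma:G}, points $(ii)$ and $(iii)$. There is no genuine obstacle in this lemma: it is a one-line corollary of the conservation laws, and the only step needing a moment's attention is recognizing the identity $(0,1)=e^{-2\eta}J_0\grad\mM$ and disposing cleanly of $\{\mM,\mI\}$, which is what makes the reference to \eqref{M.is.conserved} transparent.
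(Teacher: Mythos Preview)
Your proof is correct and follows essentially the same approach as the paper, which simply states that the lemma follows ``directly'' from \eqref{I.commutes.H} and \eqref{M.is.conserved}. You have made explicit what the paper leaves implicit: the identification $(0,1)=e^{-2\eta}J_0\grad\mM(u)$ turns the pairing into $\{\mM,\mH\}-\om\{\mM,\mI\}$, and the second bracket vanishes by the $\mT_\alpha$-invariance of $\mM$ (the paper's citation of \eqref{I.commutes.H} is best read as ``by the same argument that gave \eqref{I.commutes.H}'', i.e.\ translation invariance yields a vanishing bracket, here $\{\mM,\mI\}=0$, rather than as a literal use of $\{\mI,\mH\}=0$).
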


In terms of symmetry properties of $\mF$, we have the following lemma, which is a direct consequence of Lemma \ref{lemma:rotation}, equations \eqref{H.trans.inv} and \eqref{I.equiv}, and Lemma \ref{lemma:reversibility}, equations \eqref{R.inv.} and \eqref{R.equiv}:
\begin{lemma}\label{lemma:F.invariance}
In the assumptions of Lemma \ref{ww.rotating.wave.eq}, we get that
\begin{align}&\mE\circ\mT_\alpha=\mE,\qquad\quad\quad\quad\mE\circ\mR=\mE,  \label{E.invariance}
\\&\mF\circ\mT_\alpha = \mT_\alpha\circ\mF,\qquad\mF\circ\mR = \mR\circ\mF.\label{F.equiv.}
\end{align}
    
\end{lemma}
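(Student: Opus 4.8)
The plan is to observe that all four identities reduce, by linearity, to the invariance and equivariance properties of $\mH$ and $\mI$ that are already established. First I would record that, since $a\in\R$ is a fixed constant and the $L^2$-gradient is taken only in the variable $u=(\eta,\beta)$, the definition \eqref{lagrangian.functional} gives
\begin{equation*}
\mE(\om;u)=\mH(u)-\om\,(\mI(u)-a),\qquad \mF(\om;u)=\grad\mE(\om;u)=\grad\mH(u)-\om\,\grad\mI(u).
\end{equation*}
Thus it suffices to propagate the known properties of $\mH$ and $\mI$ through subtraction and scalar multiplication by $\om$.

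For the invariance \eqref{E.invariance}, I would substitute $\mT_\alpha u$ (resp. $\mR u$) into the formula for $\mE$ and use that $\mH$ and $\mI$ are themselves invariant: by the first identities in \eqref{H.trans.inv} and \eqref{I.equiv}, $\mH(\mT_\alpha u)=\mH(u)$ and $\mI(\mT_\alpha u)=\mI(u)$, whence $\mE(\om;\mT_\alpha u)=\mH(u)-\om(\mI(u)-a)=\mE(\om;u)$; the reflection case is identical, invoking \eqref{R.inv.} instead. Note that the constant $a$ is untouched, so it causes no difficulty.

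For the equivariance \eqref{F.equiv.}, I would use the second identities in \eqref{H.trans.inv} and \eqref{I.equiv}, namely $\grad\mH\circ\mT_\alpha=\mT_\alpha\circ\grad\mH$ and $\grad\mI\circ\mT_\alpha=\mT_\alpha\circ\grad\mI$, together with the linearity of the translation operator $\mT_\alpha$:
\begin{equation*}
\mF(\om;\mT_\alpha u)=\grad\mH(\mT_\alpha u)-\om\,\grad\mI(\mT_\alpha u)=\mT_\alpha\grad\mH(u)-\om\,\mT_\alpha\grad\mI(u)=\mT_\alpha\mF(\om;u),
\end{equation*}
the last step factoring $\mT_\alpha$ out of the linear combination. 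The reflection case follows verbatim from the second identities in \eqref{R.equiv} and the linearity of $\mR$.

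The statement is a bookkeeping corollary, so I do not expect a genuine obstacle; the only points requiring a word of care are that the gradient must be read as the gradient in $u$ alone (so that the constant $a$ disappears) and that the equivariance step genuinely uses that $\mT_\alpha$ and $\mR$ are \emph{linear} maps, allowing them to be pulled out of $\grad\mH-\om\,\grad\mI$. I would also verify, for consistency, that $\mT_\alpha$ and $\mR$ map the domain $U$ into itself: translation and reflection leave the moduli of the Fourier coefficients unchanged and hence are isometries of each $H^{\mathfrak{s},s}$-norm, so in particular they preserve the smallness constraint $\|\eta\|_{H^{\mathfrak{s},s_0+1}}<\delta_0$ defining $U$.
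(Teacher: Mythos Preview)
Your proposal is correct and matches the paper's approach: the paper states that the lemma is a direct consequence of \eqref{H.trans.inv}, \eqref{I.equiv}, \eqref{R.inv.}, and \eqref{R.equiv}, and you have simply spelled out that direct consequence. The extra remarks you add (that the gradient is in $u$ only, that $\mT_\alpha$ and $\mR$ are linear, and that they preserve the domain $U$) are sound and appropriate.
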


\subsection{The linearized operator at static circle solution}

In this section, we study the algebraic properties of the linear operator $\mL_\om:=\pa_u\mF(\omega;0)$, which is
\begin{equation}\label{lin.op.}\mL_\om=\begin{pmatrix}-\s_0(1+\pa_{\th\th}^2) & \om\pa_\th \\ -\om\pa_\th & G(0)\end{pmatrix}
\end{equation}
Let us consider the Fourier expansion of $\eta,\beta$ by using the basis \eqref{Fourier.basis}:
\begin{equation}\label{Fourier.eta.beta}\eta=\sum_{(\ell,m)\in\mT}\eta_{\ell,m}\ph_{\ell,m},\qquad\beta=\sum_{(\ell,m)\in\mT}\beta_{\ell,m}\ph_{\ell,m};
\end{equation}
starting from this, we want to compute its kernel and its range. 
One gets that
\begin{equation}\label{Lin.basis.action}\mL_\om(\eta,\beta)=\sum_{(\ell,m)\in\mT}\begin{pmatrix}r_{\ell,m}\ph_{\ell,m}\\ \rho_{\ell,-m}\ph_{\ell,-m}
\end{pmatrix},
\end{equation}
where
\begin{equation}\label{matrix.L.rep}\begin{pmatrix}r_{\ell,m}\\ \rho_{\ell,-m}
\end{pmatrix}:=\mL_{\ell,m}\begin{pmatrix}\eta_{\ell,m}\\ \beta_{\ell,-m}
\end{pmatrix}:=\begin{pmatrix}\s_0(\ell^2-1) & -\om m\ell \\ -\om m\ell & \ell
\end{pmatrix} \begin{pmatrix}\eta_{\ell,m}\\ \beta_{\ell,-m}
\end{pmatrix}.
\end{equation}
To determine the kernel of $\mL_\om$, it is enough to determine the kernel of each $\mL_{\ell,m}$. It is easy to see that given any $(\ell,m)\in\mT$, $\mL_{\ell,m}$ has a nonzero kernel if and only if $(\ell,m)=(0,0)$ or for some $\ell\ge1$ it holds
\begin{equation}\label{deg.eq.}\s_0(\ell^2-1)-\om^2\ell=0,
\end{equation}
or else, restricting only to the frequencies $\omega\ge0$, $\omega=\sqrt{\s_0}\cdot\sqrt{\tfrac{\ell^2 - 1}{\ell}}.$ 

From now on, we will suppose that there exists some $\ell_*\ge1$ such that
\begin{equation}\label{deg.freq.}\omega=\omega_*:=\sqrt{\s_0}\cdot\sqrt{\tfrac{\ell_*^2 - 1}{\ell_*}}\end{equation}
The following lemma concerns the maximum number of solutions for \eqref{deg.eq.} when we suppose \eqref{deg.freq.}:
\begin{lemma}\label{lemma:2.most.solutions}
Given \eqref{deg.freq.}, the equation \eqref{deg.eq.} has exactly one solution in $\N$, and it is $\ell=\ell_*$.
\end{lemma}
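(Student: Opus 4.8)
The plan is to substitute the degeneracy frequency \eqref{deg.freq.} into \eqref{deg.eq.} and reduce everything to a single quadratic in $\ell$ whose two roots can be written down explicitly. First I would insert $\om^2 = \s_0 \tfrac{\ell_*^2-1}{\ell_*}$ into $\s_0(\ell^2-1) - \om^2\ell = 0$, divide through by $\s_0 > 0$, and multiply by $\ell_* > 0$, obtaining the polynomial identity
\[
\ell_*\,\ell^2 - (\ell_*^2 - 1)\,\ell - \ell_* = 0.
\]
By the very definition \eqref{deg.freq.} of $\om_*$, the integer $\ell = \ell_*$ is one root of this quadratic, so it remains only to locate the second root and check that it cannot be a natural number.

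For the second root I would use Vieta's formulas: for $a\ell^2 + b\ell + c$ with $a = \ell_*$ and $c = -\ell_*$, the product of the two roots equals $c/a = -1$. Since one root is $\ell_*$, the other must be $-1/\ell_*$, which is strictly negative (as $\ell_* \geq 1$) and hence not an element of $\N$. Therefore $\ell_*$ is the only root lying in $\N$, which is precisely the assertion of the lemma.

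An alternative (and perhaps conceptually cleaner) route, which I would mention to reinforce that no second \emph{positive} solution can appear, is a monotonicity argument: for $\ell > 0$ equation \eqref{deg.eq.} is equivalent to $\om^2 = \s_0(\ell - \ell^{-1})$, and the right-hand side is strictly increasing on $(0,+\infty)$ because its derivative $\s_0(1 + \ell^{-2})$ is positive; thus it attains each value at most once, and since $\ell_*$ already realizes the value $\om_*^2$ it is the unique positive real solution.

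There is essentially no genuine obstacle here, as the claim is a short algebraic fact; the only points deserving a line of care are that $\s_0 > 0$ (so that dividing by it is legitimate), that $\ell = 0$ is not a solution of \eqref{deg.eq.} since it yields $-\s_0 = 0$, and that the competing root $-1/\ell_*$ is ruled out purely by its sign rather than by any integrality consideration.
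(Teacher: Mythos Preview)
Your main argument is correct and is essentially the paper's own proof: the paper assumes a second solution $\ell_1\ge1$, manipulates to $\ell_1\ell_*=-1$, and derives a contradiction, which is exactly your Vieta observation that the product of the roots of $\ell_*\ell^2-(\ell_*^2-1)\ell-\ell_*=0$ equals $-1$. Your additional monotonicity remark (that $\ell\mapsto\s_0(\ell-\ell^{-1})$ is strictly increasing on $(0,\infty)$) is a genuinely different and slightly cleaner route not used in the paper, since it rules out any second positive real solution without ever writing down the quadratic.
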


\begin{proof} 
Obviously, the choice \eqref{deg.freq.} of $\om$ is such that $\ell_*$ solves \eqref{deg.eq.}. We note that $\ell=0$ never solves \eqref{deg.eq.}.
Let us now suppose to have another distinct solution $\ell_1\ge 1$. Then, $\ell_1$ satisfies the identity
\begin{equation*}\sqrt{\frac{\ell_*^2-1}{\ell_*}}=\sqrt{\frac{\ell_1^2-1}{\ell_1}};
\end{equation*}
if we square both sides and multiply them by $\ell_*\ell_1$, we get $\ell_1(\ell_*^2-1)=\ell_*(\ell_1^2-1)$, from which
\begin{equation*}\ell_1\ell_*=-1,
\end{equation*}
which contradicts the fact that $\ell_1,\ell_*\ge1$.

\end{proof}

We notice that only the matrices $\mL_{0,0}$, $\mL_{\ell_*,1}$ and $\mL_{\ell_*,-1}$ have nonzero kernel, given by
\begin{align*}&\ker\mL_{0,0}=\Big\{\lambda_{0,0}\begin{pmatrix}0 \\ 1
\end{pmatrix}\colon\,\lambda_{0,0}\in\R\Big\},
\\&\ker\mL_{\ell_*,m}=\Big\{\lambda_{\ell_*,m}\begin{pmatrix}1 \\ -\om_*m
\end{pmatrix}\colon\,\lambda_{\ell_*,m}\in\R\Big\},
\end{align*}
hence, calling 
\begin{equation}\mS:=\{(0,0),(\ell_*,1),(\ell_*,-1)\}.
\end{equation}
by Lemma \ref{lemma:2.most.solutions} we get that the kernel $V$ of $\mL_{\om_*}$ and its $L^2\times L^2-$orthogonal complement $W$ are respectively
\begin{align}&V=\Big\{\lambda_{0,0}\begin{pmatrix}0 \\ \ph_{0,0}
\end{pmatrix} + \lambda_{\ell_*,1}\begin{pmatrix}\ph_{\ell_*,1} \\ -\om_*\ph_{\ell_*,-1}
\end{pmatrix} + \lambda_{\ell_*,-1}\begin{pmatrix}\ph_{\ell_*,-1} \\ \om_*\ph_{\ell_*,1}
\end{pmatrix}\colon\,\lambda_{0,0},\lambda_{\ell_*,1},\lambda_{\ell_*,-1}\in\R \Big\},
\label{kernel.L.}
\\&W=\Big\{\begin{pmatrix}\eta \\ \beta
\end{pmatrix} = \lambda_{0,0}\begin{pmatrix}\ph_{0,0} \\ 0
\end{pmatrix} + \lambda_{\ell_*,1}\begin{pmatrix}\om_*\ph_{\ell_*,1}\\ \ph_{\ell_*,-1}
\end{pmatrix} + \lambda_{\ell_*,-1}\begin{pmatrix}-\om_*\ph_{\ell_*,-1}\\ \ph_{\ell_*,1}
\end{pmatrix} + \sum_{(\ell,m)\in\mT\setminus \mS}\begin{pmatrix}\eta_{\ell,m}\ph_{\ell,m}\\ \beta_{\ell,-m}\ph_{\ell,-m}\notag
\end{pmatrix}
\\&\qquad\qquad\qquad\qquad\qquad\colon\lambda_{0,0},\lambda_{\ell_*,1},\lambda_{\ell_*,-1},\eta_{\ell,m},\beta_{\ell,m}\in\R,\qquad(\eta,\beta)\in L^2\times L^2\Big\}.\label{W}
\end{align}
From this, we get the decomposition 
\begin{equation}\label{lyap.schm.}L^2\times L^2=V\oplus W,
\end{equation}
from which we get also
\begin{equation}\label{lyap.schm.dec.}H^{\mathfrak{s},s+1}\times H^{\mathfrak{s},s+\frac12}=V\oplus W^{\mathfrak{s},s},\qquad W^{\mathfrak{s},s}:=W\cap(H^{\mathfrak{s},s+\frac32}\times H^{\mathfrak{s},s+1}).
\end{equation}

To compute the range of $\mL_{\om_*}$, by using \eqref{Lin.basis.action} and \eqref{matrix.L.rep} we observe that $\mL_{\om_*}$ acts on $W$ as
\begin{align*}&\mL_{\om_*}\begin{pmatrix}\ph_{0,0} \\ 0
\end{pmatrix} = \begin{pmatrix}\s_0\ph_{0,0} \\ 0
\end{pmatrix},
\\&\mL_{\om_*}\begin{pmatrix}\om_*\ph_{\ell_*,1} \\ \ph_{\ell_*,-1}
\end{pmatrix} = -\ell_*(\om_*^2+1)\begin{pmatrix}\om_*\ph_{\ell_*,1}\\ \ph_{\ell_*,-1}
\end{pmatrix},
\\&\mL_{\om_*}\begin{pmatrix}-\om_*\ph_{\ell_*,-1} \\ \ph_{\ell_*,1}
\end{pmatrix} = \ell_*(\om_*^2+1)\begin{pmatrix}\om_*\ph_{\ell_*,-1}\\ -\ph_{\ell_*,1}
\end{pmatrix},
\\&\mL_{\om_*}\begin{pmatrix}\eta_{\ell,m}\ph_{\ell,m} \\ \beta_{\ell,-m}\ph_{\ell,-m}
\end{pmatrix} = \begin{pmatrix}(-\s_0(\ell^2-1)\eta_{\ell,m}-\om_* m\ell\beta_{\ell,-m})\ph_{\ell,m} \\ (-\om_* m\ell\eta_{\ell,m}-\ell\beta_{\ell,-m})\ph_{\ell,-m}
\end{pmatrix},
\end{align*}
from which we get that the range $R$ of $L$ and its $L^2\times L^2-$orthogonal complement $Z$ are respectively
\begin{align}&R=\Big\{\begin{pmatrix}r \\ \rho
\end{pmatrix} = r_{0,0}\begin{pmatrix}\ph_{0,0} \\ 0
\end{pmatrix} + r_{\ell_*,1}\begin{pmatrix}\om_*\ph_{\ell_*,1} \\ \ph_{\ell_*,-1}
\end{pmatrix} +r_{\ell_*,-1}\begin{pmatrix}\om_*\ph_{\ell_*,-1} \\ -\ph_{\ell_*,1}
\end{pmatrix} + \sum_{(\ell,m)\in\mT\setminus\mS}\begin{pmatrix}r_{\ell,m}\ph_{\ell,m} \\ \rho_{\ell,-m}\ph_{\ell,-m}
\end{pmatrix} \notag
\\&\qquad\qquad\qquad\colon r_{0,0},r_{\ell_*,1},r_{\ell_*,-1},r_{\ell,m},\rho_{\ell,m}\in\R,\quad(r,\rho)\in L^2\times L^2
\Big\}, \label{R}
\\& Z=\Big\{r_{0,0}\begin{pmatrix}0 \\ \ph_{0,0}
\end{pmatrix} + r_{\ell_*,1}\begin{pmatrix}\ph_{\ell_*,1} \\ -\om_*\ph_{\ell_*,-1}
\end{pmatrix} + r_{\ell_*,-1}\begin{pmatrix}\ph_{\ell_*,-1} \\ \om_*\ph_{\ell_*,1}
\end{pmatrix}\colon\,r_{0,0},r_{\ell_*,1},r_{\ell_*,-1}\in\R\Big\} = V
\end{align}
As well as done in \eqref{lyap.schm.}-\eqref{lyap.schm.dec.}, we have the decomposition
\begin{equation}\label{range.split}L^2\times L^2=R\oplus Z,\end{equation}
and so
\begin{equation}\label{range.dec}H^{\mathfrak{s},s-\frac12}\times H^{\mathfrak{s},s}=R^{\mathfrak{s},s}\oplus Z,\qquad R^{\mathfrak{s},s}:=R\cap(H^{\mathfrak{s},s-\frac12}\times H^{\mathfrak{s},s}).\end{equation}

Now, for any $(\eta,\beta)\in W^{\mathfrak{s},s}$ we set $\|(\eta,\beta)\|_{W^{\mathfrak{s},s}}^2:=\|\eta\|_{H^{\mathfrak{s,}s+\frac32}}^2 + \|\beta\|_{H^{\mathfrak{s},s+1}}^2$, while for any $(f,g)\in R^{\mathfrak{s},s}$ we set $\|(f,g)\|_{R^{\mathfrak{s},s}}^2:=\|f\|_{H^{\mathfrak{s},s-\frac12}}^2 + \|g\|_{H^{\mathfrak{s},s}}^2$
Next, we prove that the restriction of $\mL_{\om_*}$ on $W^{\mathfrak{s},s}$ is a linear homeomorphism between $W^{\mathfrak{s},s}$ and $R^{\mathfrak{s},s}$.

\begin{lemma}\label{L.invertibility} In the notations above, the linear operator $\mL_{\om_*}|_{W^{\mathfrak{s},s}}\colon W^{\mathfrak{s},s}\longmapsto R^{\mathfrak{s},s}$ is well-defined and bijective; in particular, it satisfies the estimate
\begin{equation}\label{L.homeo.est.}\|\mL_{\om_*}(\eta,\beta)\|_{R^{\mathfrak{s},s}}\le C(\om_*,\s_0)\|(\eta,\beta)\|_{W^{\mathfrak{s},s}},\qquad\forall(\eta,\beta)\in W^{\mathfrak{s},s},
\end{equation}
where $C(\om_*,\s_0)>0$ is a constant depending on $\om_*,\s_0$.

As a result, the inverse operator of $\mL_{\om_*}$, namely $L|_{W^{\mathfrak{s},s}}^{-1}\colon R^{\mathfrak{s},s}\longmapsto W^{\mathfrak{s},s}$, is continuous.
\end{lemma}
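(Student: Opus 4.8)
The plan is to diagonalize in the Fourier basis. By \eqref{Lin.basis.action}--\eqref{matrix.L.rep} the operator $\mL_{\om_*}$ acts on each pair $(\eta_{\ell,m},\beta_{\ell,-m})$ through the $2\times 2$ block $\mL_{\ell,m}$, so every assertion reduces to mode-by-mode estimates, and the two decompositions \eqref{lyap.schm.dec.} and \eqref{range.dec} are respected block by block. First I would settle well-definedness and the bound \eqref{L.homeo.est.}: the entry $-\s_0(1+\pa_{\th\th}^2)$ is a differential operator of order $2$, $\om_*\pa_\th$ has order $1$, and $G(0)$ has order $1$ (the flat Dirichlet--Neumann operator behaves like $|\pa_\th|$, cf.\ Lemma \ref{lemma:G}, point $(vi)$). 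Tracking these orders shows that the first component of $\mL_{\om_*}(\eta,\beta)$ lands in $H^{\mathfrak{s},s-\frac12}$ and the second in $H^{\mathfrak{s},s}$, while the explicit action already computed before \eqref{R} shows the image sits in $R$; hence $\mL_{\om_*}|_{W^{\mathfrak{s},s}}$ maps into $R^{\mathfrak{s},s}$ and \eqref{L.homeo.est.} follows at once from these mapping orders.

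Injectivity is immediate: $\ker\mL_{\om_*}=V$ and $V\cap W=\{0\}$ by \eqref{lyap.schm.}, so the restriction to $W^{\mathfrak{s},s}\subset W$ has trivial kernel. For surjectivity together with a bounded inverse I would invert block by block. On the finitely many blocks indexed by $\mS$ (including $\ell=0$, where $\mL_{0,0}$ restricted to the $(\ph_{0,0},0)$ direction of $W$ is multiplication by $-\s_0$), the operator acts as a nonzero scalar between the relevant $W$- and $R$-directions, with factors $\s_0$ and $\ell_*(\om_*^2+1)$, and is therefore boundedly invertible. For $(\ell,m)\in\mT\setminus\mS$ with $\ell\ge 1$ one has $m^2=1$, so
\begin{equation*}
\det\mL_{\ell,m}=\ell\,\big[\s_0(\ell^2-1)-\om_*^2\ell\big]=\s_0\,\ell\,(\ell-\ell_*)\big(\ell+\tfrac{1}{\ell_*}\big),
\end{equation*}
where the factorization uses that the two roots of the quadratic are $\ell_*$ and $-1/\ell_*$ (the relation $\ell_1\ell_*=-1$ from the proof of Lemma \ref{lemma:2.most.solutions}). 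By Lemma \ref{lemma:2.most.solutions} the bracket never vanishes off $\ell_*$, so $|\det\mL_{\ell,m}|\ge c\,\ell^2$ for all admissible $\ell\neq\ell_*$ and $|\det\mL_{\ell,m}|\sim\s_0\,\ell^3$ as $\ell\to\infty$.

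Reading off $\mL_{\ell,m}^{-1}=\tfrac{1}{\det\mL_{\ell,m}}\left(\begin{smallmatrix}\ell & \om_* m\ell\\ \om_* m\ell & \s_0(\ell^2-1)\end{smallmatrix}\right)$ and using the asymptotic $\det\sim\s_0\ell^3$, the inverse gains two derivatives in $\eta$ from both $r$ and $\rho$, two derivatives in $\beta$ from $r$, and exactly one derivative in $\beta$ from $\rho$. Comparing with the target indices, $\eta\in H^{\mathfrak{s},s+\frac32}$ and $\beta\in H^{\mathfrak{s},s+1}$ against sources $r\in H^{\mathfrak{s},s-\frac12}$ and $\rho\in H^{\mathfrak{s},s}$, this is precisely the gain required to map $R^{\mathfrak{s},s}$ into $W^{\mathfrak{s},s}$, the $\beta$--$\rho$ pairing being the sharp constraint (gain exactly one) while all other couplings have a margin. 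Summing the squared weighted coefficients over modes then yields a bounded inverse, and hence continuity of $\mL_{\om_*}|_{W^{\mathfrak{s},s}}^{-1}$.

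The main obstacle is the quantitative, small-divisor-type lower bound on $|\det\mL_{\ell,m}|$ uniformly away from the resonance. Since $\ell_*\in\N$ and the only competing root $-1/\ell_*$ is negative, the nearest integers $\ell_*\pm 1$ produce only an $\ell^2$ lower bound rather than $\ell^3$; the point to stress is that these are \emph{finitely many} modes contributing bounded inverse factors, so they affect only the constant $C(\om_*,\s_0)$ and not the regularity class, while the asymptotic $\ell^3$ growth controls the tail and fixes the derivative count. Once this dichotomy between the finite resonant block and the asymptotic regime is in place, bijectivity and the two-sided estimate follow.
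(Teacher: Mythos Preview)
Your proposal is correct, but the route differs from the paper's. The paper proceeds more abstractly: it verifies the forward bound \eqref{L.homeo.est.} by a direct mode-by-mode computation (expanding $\|\mL_{\om_*}(\eta,\beta)\|_{R^s}^2$, using $(|a|+|b|)^2\le 2(a^2+b^2)$, and obtaining the explicit constant $C=\sqrt{2(\s_0^2+\om_*^2+3)}$), then asserts bijectivity ``by definitions'' of $W$ and $R$, and finally invokes the Open Mapping Theorem for the continuity of the inverse. Your argument instead constructs the inverse explicitly block by block, factors the determinant as $\s_0\ell(\ell-\ell_*)(\ell+1/\ell_*)$, and tracks the derivative gains of each entry of $\mL_{\ell,m}^{-1}$ to show the inverse maps $R^{\mathfrak{s},s}$ boundedly into $W^{\mathfrak{s},s}$. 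This is more constructive and, in particular, makes the surjectivity onto $R^{\mathfrak{s},s}$ (as opposed to merely onto $R$) fully transparent --- a step the paper leaves implicit in its appeal to the Open Mapping Theorem. One minor imprecision: you state that only the $\beta$--$\rho$ coupling is sharp, but the $\eta$--$r$ coupling (entry $\ell/\det\sim\ell^{-2}$, needed gain $s+\tfrac32-(s-\tfrac12)=2$) is also exactly sharp; this does not affect the validity of the estimate, since equality is enough.
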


\begin{proof} We prove \eqref{L.homeo.est.} only for $\mathfrak{s}=0$, as in the other cases the proof follows identically. Fixed $(\eta,\beta)\in W^s$, we get
\begin{align*}\|\mL_{\om_*}(\eta,\beta)\|_{R^s}^2 & = \sum_{(l,m)\in\mT\setminus\mS}\ell^{2(s-\frac12)}(\s_0(\ell^2-1)\eta_{\ell,m} + \om_* m \ell\b_{\ell,-m})^2 
\\&\qquad\qquad+ \sum_{(l,m)\in\mT\setminus\mS}\ell^{2s}(\om_* m \ell\eta_{\ell,m} + \ell\b_{\ell,-m})^2
\\&\le 2\sum_{(l,m)\in\mT\setminus\mS}\ell^{2(s-\frac12)}(\s_0^2(\ell^2-1)^2\eta_{\ell,m}^2 + \om_*^2\ell^2\b_{\ell,-m}^2) 
\\&\qquad\qquad+ 2\sum_{(l,m)\in\mT\setminus\mS}\ell^{2s}(\om_*^2\ell^2\eta_{\ell,m}^2 + \ell^2\b_{\ell,-m}^2)
\\&=2\sum_{(\ell,m)\in\mT\setminus\mS}[\s_0^2(\ell^4 - 2\ell^2 + 1) + \om_*^2\ell^{3}]\ell^{2(s-\frac12)}\eta_{\ell,m}^2 
\\&\qquad\qquad+ 2\sum_{(\ell,m)\in\mT\setminus\mS}[\om_*^2+\ell]\ell^{2(s+\frac12)}\b_{\ell,-m}^2
\\&\le2(\s_0^2+\om_*^2+3)\|\eta\|_{H^{s+\frac32}}^2 + 2(\om_*^2+1)\|\beta\|_{H^{s+1}}^2
\\&\le2(\s_0^2 + \om_*^2 + 3)\|(\eta,\beta)\|_{W^s}^2;
\end{align*}
we have used the definition \eqref{R} of $R$, the inequality $(|a|+|b|)^2\le2(a^2+b^2)$ for $a,b\in\R$, and the fact that for all $(\ell,m)\in\mT\setminus\mS$, one has $\ell\ge1$.
Finally we get \eqref{L.homeo.est.} with $C(\om_*,\s_0):=\sqrt{2(\s_0^2 + \om_*^2 +3)}$.

Hence, we get that $\mL_{\om_*}|_{W^s}$ maps $W^s$ to $R^s$, and it is a bijection by definitions \eqref{W} of $W$ and \eqref{R} of $R$; finally, by Open Mapping Theorem the inverse of $\mL_{\om_*}|_{W^s}$ is continuous.
\end{proof}

Let us now further decompose $V$ as $V=V_N\oplus V_D$, where
\begin{equation}\label{nondeg.space}\begin{aligned}&V_N:=\Big\{\lambda_{1}\mathtt v_{1} + \lambda_{-1}\mathtt v_{-1}\colon\,\lambda_1,\lambda_{-1}\in\R \Big\},
\\&\mathtt v_{m}:=(1+\om_*^2)^{-\frac12}\begin{pmatrix}\ph_{\ell_*,m}\\-\om_*m\ph_{\ell_*,-m}
\end{pmatrix},\qquad \mathtt v_0:=\begin{pmatrix}0 \\ \ph_{0,0}
\end{pmatrix}.
\end{aligned}\end{equation}
We call then $Z_N:=V_N$.
In these notations, we can prove that the torus action $\mT_\alpha$ is compatible with the decompositions \eqref{lyap.schm.}:
\begin{lemma}\label{Torus.action.lyap.schmidt}
In the notations above, for all $\alpha\in\T^1$ we have the following identities:
\begin{equation}\label{torus.action.v.var}\mT_\alpha\mathtt v_0=\mathtt v_0,\qquad\mT_\alpha\begin{pmatrix}\mathtt v_{1} \\ \mathtt v_{-1}\end{pmatrix} = \begin{pmatrix}\cos(\ell_*\alpha) & -\sin(\ell_*\alpha) \\ \sin(\ell_*\alpha) & \cos(\ell_*\alpha)
\end{pmatrix}\begin{pmatrix}\mathtt v_1 \\ \mathtt v_{-1}
\end{pmatrix}.
\end{equation}
As a result, one has
\begin{equation}\label{torus.inv.lyap.schmidt}\mT_\alpha V_N=V_N,\qquad\mT_\alpha V=V,\qquad\mT_\alpha W=W.
\end{equation}
    
\end{lemma}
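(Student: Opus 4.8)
The plan is to reduce the whole statement to the elementary action of $\mT_\alpha$ on the trigonometric basis \eqref{Fourier.basis} and then propagate it through the definitions. First I would compute, directly from $(\mT_\alpha f)(\th)=f(\th+\alpha)$ together with the addition formulas for sine and cosine, that for every $\ell\ge1$
\begin{equation*}\mT_\alpha\ph_{\ell,1}=\cos(\ell\alpha)\ph_{\ell,1}-\sin(\ell\alpha)\ph_{\ell,-1},\qquad\mT_\alpha\ph_{\ell,-1}=\sin(\ell\alpha)\ph_{\ell,1}+\cos(\ell\alpha)\ph_{\ell,-1},\end{equation*}
whereas $\mT_\alpha\ph_{0,0}=\ph_{0,0}$ since $\ph_{0,0}$ is constant. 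The first identity in \eqref{torus.action.v.var} is then immediate, because $\mathtt v_0=(0,\ph_{0,0})$ and $\mT_\alpha$ acts componentwise.

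For the rotation identity I would specialize the two displayed formulas to $\ell=\ell_*$ and apply $\mT_\alpha$ componentwise to $\mathtt v_{\pm1}$ as defined in \eqref{nondeg.space}. A short bookkeeping of the resulting two components, matching the coefficients of $\ph_{\ell_*,1}$ and $\ph_{\ell_*,-1}$, yields $\mT_\alpha\mathtt v_1=\cos(\ell_*\alpha)\mathtt v_1-\sin(\ell_*\alpha)\mathtt v_{-1}$ and $\mT_\alpha\mathtt v_{-1}=\sin(\ell_*\alpha)\mathtt v_1+\cos(\ell_*\alpha)\mathtt v_{-1}$, which is exactly the claimed matrix action. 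The only delicate point is that the factor $-\om_* m$ in the second component of $\mathtt v_m$ swaps the two basis functions in a way consistent with the rotation; this is confirmed by direct substitution.

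The invariances \eqref{torus.inv.lyap.schmidt} then follow formally. Since the rotation matrix is invertible, \eqref{torus.action.v.var} gives $\mT_\alpha V_N\subseteq V_N$, and applying $\mT_{-\alpha}$ yields the equality $\mT_\alpha V_N=V_N$. Comparing the three generators of $V$ in \eqref{kernel.L.} with \eqref{nondeg.space}, they are scalar multiples of $\mathtt v_0,\mathtt v_1,\mathtt v_{-1}$, so $V=\mathrm{span}\{\mathtt v_0\}\oplus V_N$; since $\mT_\alpha$ fixes $\mathtt v_0$ and preserves $V_N$, we get $\mT_\alpha V=V$.

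Finally, for $W=V^\perp$ I would invoke the unitarity of $\mT_\alpha$ established in Lemma \ref{lemma:rotation}, namely $\mT_\alpha^*=\mT_{-\alpha}$. For any $w\in W$ and $v\in V$ one has $\la\mT_\alpha w,v\ra_{L^2}=\la w,\mT_{-\alpha}v\ra_{L^2}=0$, because $\mT_{-\alpha}v\in V$ by the previous step and $w\perp V$; hence $\mT_\alpha W\subseteq W$, and equality again follows by applying $\mT_{-\alpha}$. The main—and essentially only—obstacle is the coefficient bookkeeping in the second step; the invariance statements are soft consequences of invertibility of the rotation and unitarity of $\mT_\alpha$.
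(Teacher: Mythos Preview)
Your proposal is correct and follows essentially the same route as the paper: both compute $\mT_\alpha\mathtt v_{\pm1}$ directly from the addition formulas for $\cos(\ell_*\th+\ell_*\alpha)$ and $\sin(\ell_*\th+\ell_*\alpha)$, and then derive $\mT_\alpha W=W$ from $\mT_\alpha V=V$ via the unitarity $\mT_\alpha^*=\mT_{-\alpha}$. The only cosmetic difference is that you first record the action of $\mT_\alpha$ on the scalar basis $\ph_{\ell,\pm1}$ and then assemble $\mathtt v_{\pm1}$, whereas the paper works directly with the two-component vectors; the content is the same.
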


\begin{proof}
Let us start by proving \eqref{torus.action.v.var}. 
The first identity is obvious, being $\mathtt v_0$ a constant vector. As for the second identity, by \eqref{nondeg.space} and by \eqref{Fourier.basis} one has
\begin{align*}(\mT_\alpha\mathtt v_1)(\th)&=(1+\om_*^2)^{-\frac12}\begin{pmatrix}\ph_{\ell_*,1}(\th+\alpha) \\ -\om_*\ph_{\ell_*,-1}(\th+\alpha)
\end{pmatrix}
\\&= (1+\om_*^2)^{-\frac12}\begin{pmatrix}\cos(\ell_*\th+\ell_*\alpha) \\ -\om_*\sin(\ell_*\th+\ell_*\alpha)
\end{pmatrix}
\\&= (1+\om_*^2)^{-\frac12}\begin{pmatrix}\cos(\ell_*\alpha)\cos(\ell_*\th) - \sin(\ell_*\alpha)\sin(\ell_*\th) \\ -\om_*[\cos(\ell_*\alpha)\sin(\ell_*\th) + \sin(\ell_*\alpha)\cos(\ell_*\th)]\end{pmatrix}
\\&=\cos(\ell_*\alpha)\mathtt v_1(\th) - \sin(\ell_*\alpha)\mathtt v_{-1}(\th).
\end{align*}
An analogous computation shows that $\mT_\alpha\mathtt v_{-1}=\sin(\ell_*\alpha)\mathtt v_1 + \cos(\ell_*\alpha)\mathtt v_{-1}$, from which the second identity in \eqref{torus.action.v.var}.

From \eqref{torus.action.v.var} we immediately get the first and the second identity in \eqref{torus.inv.lyap.schmidt}. The third identity can be obtained as follows.
Let $w\in W$. Then, for any $\alpha\in\T^1$ and for any $v\in V$, since $\mT_\alpha^*=\mT_{-\alpha}=(\mT_\alpha)^{-1}$ and $W\perp V=\mT_\alpha V$ one has
\begin{equation*}\la\mT_\alpha w,v\ra_{(L^2)^2}=\la w, T_{-\alpha}v\ra_{(L^2)^2}=0,
\end{equation*}
from which $\mT_\alpha w\in W$, and so $\mT_\alpha W\subseteq W$; however, by applying $\mT_{-\alpha}$ to this inclusion, one has $W\subseteq T_{-\alpha} W$ but also $\mT_{-\alpha}W\subseteq W$, so that $\mT_{-\alpha}W=W$ and, of course, $\mT_\alpha W=W$.

\end{proof}

\subsection{Existence of rotating waves}

In this section, we want to find nonzero solutions to the rotating wave equation
\begin{equation}\label{rw.eq.}\mF(\om;u)=0.
\end{equation}
In particular, we want to show that each $\omega_*$ as in \eqref{deg.freq.} is a bifurcation point from a multiple eigenvalue. As in the section before, we will denote by $|\cdot|$ the Euclidean norm on a generic finite-dimensional vector space, and by $\|\cdot\|:=\|\cdot\|_{W^{\mathfrak{s},s}}$ the norm of $W^{\mathfrak{s},s}$. All the proofs of this section are very similar to \cite{BLS}; however, for a matter of self-containedness, we will write them.

By the decomposition \eqref{lyap.schm.dec.}, each $u\in U$ can be uniquely written as
\begin{equation}\label{u.dec.}u=v+w,\qquad v\in V,\,w\in W^{\mathfrak{s},s},
\end{equation}
and that, defining the projection maps $\Pi_{R^{\mathfrak{s},s}}\colon H^{\mathfrak{s},s-\frac12}\times H^{\mathfrak{s},s}\longmapsto R^{\mathfrak{s},s}$ and $\Pi_{Z}\colon H^{\mathfrak{s},s-\frac12}\times H^{\mathfrak{s},s}\longmapsto Z$, the equation \eqref{rw.eq.} is equivalent to the system of equations
\begin{align}&\Pi_{R^{\mathfrak{s},s}}\mF(\omega;v+w)=0, \label{Range.eq.}
\\&\Pi_{Z}\mF(\omega;v+w)=0.\label{Bif.eq.}
\end{align}
For any fixed $\epsilon>0$, let
\begin{equation}\label{U.eps}\mU_\epsilon:=\{(\om,v)\in\R\times V\colon\,|\om-\om_*|<\epsilon,\,|v|<\epsilon\}.
\end{equation}
We start by observing that the range equation \eqref{Range.eq.} can be solved by Implicit Function Theorem.
\begin{lemma}\label{Range.eq.solv.}Let $\mF$ be as in \eqref{F.def.} and let $\om_*$ satisfy \eqref{deg.freq.}.

Then, there exists $\epsilon_0>0$, such that for all $(\om,v)\in\mU_{\epsilon_0}$, there exists one and only one analytic function $(\om,v)\in\mU_{\epsilon_0}\longmapsto w(\om,v)\in W^{\mathfrak{s},s}$ such that 
\begin{equation}\label{zeros.implicit.func.}\Pi_{R^{\mathfrak{s},s}}\mF(\om;v+w(\om;v))=0.\end{equation}
Moreover, for all $(\om,v)\in\mU_{\epsilon_0}$ and $\alpha\in\T^1$,
\begin{align}&w(\om,0)=0,\qquad\pa_v w(\om_*,0)=0, \label{w.values}
\\&w(\om;v)=O(|v|^2+|v|\cdot|\om-\om_*|),\label{w.est.}
\\&w(\om;\mT_\alpha v) = \mT_\alpha w(\om;v). \label{w.equiv.}
\end{align}

\end{lemma}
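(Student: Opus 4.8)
The plan is to solve the range equation \eqref{Range.eq.} by the analytic Implicit Function Theorem at the static circle. First I would introduce the analytic map
\begin{equation*}
\mathcal{G}\colon\,\R\times V\times W^{\mathfrak{s},s}\longmapsto R^{\mathfrak{s},s},\qquad \mathcal{G}(\om,v,w):=\Pi_{R^{\mathfrak{s},s}}\mF(\om;v+w),
\end{equation*}
whose analyticity is inherited from that of $\mF$ (Lemma \ref{ww.rotating.wave.eq}). Since $(\eta,\beta)=(0,0)$ is the static circle, a direct substitution gives $\mF(\om;0)=0$ for every $\om$, hence $\mathcal{G}(\om,0,0)=0$. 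The partial derivative in $w$ at the base point is
\begin{equation*}
\pa_w\mathcal{G}(\om_*,0,0)=\Pi_{R^{\mathfrak{s},s}}\circ\big(\pa_u\mF(\om_*;0)\big)\big|_{W^{\mathfrak{s},s}}=\mL_{\om_*}\big|_{W^{\mathfrak{s},s}},
\end{equation*}
which by Lemma \ref{L.invertibility} is a linear homeomorphism from $W^{\mathfrak{s},s}$ onto $R^{\mathfrak{s},s}$. The analytic IFT then yields $\epsilon_0>0$ and a unique analytic $w\colon\,\mU_{\epsilon_0}\longmapsto W^{\mathfrak{s},s}$ with $w(\om_*,0)=0$ solving \eqref{zeros.implicit.func.}; shrinking $\epsilon_0$ keeps $v+w(\om,v)$ inside the domain $U$.

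Next I would extract the qualitative properties. Because $\mathcal{G}(\om,0,0)=0$ for all $\om$ near $\om_*$, the uniqueness part of the IFT forces $w(\om,0)=0$, the first identity in \eqref{w.values}. Differentiating \eqref{zeros.implicit.func.} in $v$ and evaluating at $(\om_*,0)$, I use that $\pa_v\mathcal{G}(\om_*,0,0)=\Pi_{R^{\mathfrak{s},s}}\circ\mL_{\om_*}|_V=0$ (since $V=\ker\mL_{\om_*}$), so that invertibility of $\pa_w\mathcal{G}(\om_*,0,0)$ gives $\pa_v w(\om_*,0)=0$, the second identity in \eqref{w.values}. The estimate \eqref{w.est.} then follows by Taylor expansion: $w(\om,0)\equiv0$ removes all terms depending on $\om$ alone, while $\pa_v w(\om_*,0)=0$ makes the coefficient of the $v$-linear term vanish at $\om=\om_*$ and hence be $O(|\om-\om_*|)$, leaving $w(\om,v)=O(|v|^2)+O(|v|\,|\om-\om_*|)$.

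Finally, for the equivariance \eqref{w.equiv.} I would exploit the symmetry recorded in Lemma \ref{lemma:F.invariance}, identity \eqref{F.equiv.}, namely $\mF\circ\mT_\alpha=\mT_\alpha\circ\mF$. By Lemma \ref{Torus.action.lyap.schmidt} the action $\mT_\alpha$ preserves $V$ and $W$; since $\mL_{\om_*}$ commutes with $\mT_\alpha$ (being the linearization at the $\mT_\alpha$-fixed point $0$), it also preserves $R=\mathrm{range}(\mL_{\om_*})$ and $Z=V$, so $\mT_\alpha$ commutes with $\Pi_{R^{\mathfrak{s},s}}$. Applying $\mT_\alpha$ to \eqref{zeros.implicit.func.} and using $|\mT_\alpha v|=|v|$ gives
\begin{equation*}
\Pi_{R^{\mathfrak{s},s}}\mF(\om;\mT_\alpha v+\mT_\alpha w(\om,v))=0,
\end{equation*}
with $\mT_\alpha v\in V$ and $\mT_\alpha w(\om,v)\in W^{\mathfrak{s},s}$; uniqueness of the solution of the range equation at the parameter $\mT_\alpha v$ then forces $\mT_\alpha w(\om,v)=w(\om,\mT_\alpha v)$. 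The only genuinely delicate points are bookkeeping: confirming that $\pa_w\mathcal{G}(\om_*,0,0)$ is exactly the isomorphism of Lemma \ref{L.invertibility} between the correct analytic scales $H^{\mathfrak{s},\cdot}$, and that $\epsilon_0$ can be chosen so that the IFT solution stays in $U$, where $\mF$ is analytic. The analytic heavy lifting itself has already been carried out in Lemmas \ref{lemma:G} and \ref{L.invertibility}.
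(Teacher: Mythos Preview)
Your proposal is correct and follows essentially the same approach as the paper: apply the analytic Implicit Function Theorem using Lemma \ref{L.invertibility} for the invertibility of $\pa_w\mathcal{G}(\om_*,0,0)=\mL_{\om_*}|_{W^{\mathfrak{s},s}}$, then derive \eqref{w.values} by uniqueness and differentiation, \eqref{w.est.} by Taylor expansion, and \eqref{w.equiv.} by combining the equivariance \eqref{F.equiv.} with the $\mT_\alpha$-invariance of $V$, $W$, $R$ (Lemma \ref{Torus.action.lyap.schmidt}) and uniqueness. Your treatment is in fact slightly more explicit than the paper's in justifying why $\mT_\alpha$ commutes with $\Pi_{R^{\mathfrak{s},s}}$ and why the pure-$\om$ Taylor terms vanish (via $w(\om,0)\equiv0$ rather than the paper's observation that $\mF$ is linear in $\om$).
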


\begin{proof} The solvability of the range equation \eqref{Range.eq.} and the analiticity of $w(\om;v)$ in $\mU_{\epsilon_0}$ can be immediately deduced by Lemma \ref{L.invertibility}, thanks to which one can apply Implicit Function Theorem.
The first identity in \eqref{w.values} still follows from Implicit Function Theorem. The second identity follows from the fact that differentiating both sides of \eqref{zeros.implicit.func.} with respect to $v$ along any direction $\tilde v\in V$, one gets
\begin{equation*}0=\Pi_{R^{\mathfrak{s},s}}d\mF(\om_*;0)[\tilde v + dw(\om_*;0)\tilde v]=\Pi_{R^{\mathfrak{s},s}}\mL_{\om_*}|_{W^{\mathfrak{s},s}}[d w(\om_*;0)\tilde v],
\end{equation*}
which, by Lemma \ref{L.invertibility}, necessarily implies $d w(\om_*;0)\tilde v=0$ for all $\tilde v\in V$ and so $\pa_v w(\om_*;0)=0$.
The estimate \eqref{w.est.} follows by the Taylor expansion of $w$ about $(\om_*;0)$. In particular, since $\mF(\om;u)$ is linear in $\om$, necessarily $\pa_\om^k w(\om_*;0)=0$ for all $k\in\N$.
As for \eqref{w.equiv.}, by \eqref{zeros.implicit.func.} and by Lemma \ref{lemma:F.invariance}, equation \eqref{F.equiv.}, one has that for any $\alpha\in\T^1$ and for any $(\om,v)\in\mU_{\epsilon_0}$,
\begin{equation*}0=\mT_\alpha\Pi_{R^{\mathfrak{s},s}}\mF(\om;v+w(\om;v))=\Pi_{R^{\mathfrak{s},s}}\mT_\alpha\mF(\om;v+w(\om;v))=\Pi_{R^{\mathfrak{s},s}}\mF(\om;\mT_\alpha v+\mT_\alpha w(\om;v)).
\end{equation*}
Moreover, since $(\om,\mT_\alpha v)\in\mU_{\epsilon_0}$, we also have
\begin{equation*}\Pi_{R^{\mathfrak{s},s}}\mF(\om;\mT_\alpha v + w(\om;\mT_\alpha v))=0,
\end{equation*}
By uniqueness of $w(\om;v)$, one gets \eqref{w.equiv.}.

\end{proof}

Thanks to Lemma \ref{Range.eq.solv.}, we can reduce the problem of solving the equation $\mF(\om;u)=0$ to finding the solutions of the equation
\begin{equation}\label{red.eq.}\Pi_{Z}\mF(\omega;v+w(\om;v))=0.
\end{equation}
on $\mU_{\epsilon_0}$.
As a direct consequence of Lemma \ref{lemma:v00.orth.property}, in the bifurcation equation we can erase the projection onto the subspace of $Z$ generated by $\mathtt v_0$:
\begin{lemma}\label{lemma:reducing.nondeg.space}
In the assumptions of Lemma \ref{Range.eq.solv.}, one has that
\begin{equation}\label{nondeg.space.needed}\Pi_Z\mF(\om;v+w(\om;v))=0 \Longleftrightarrow\Pi_{Z_N}\mF(\om;v+w(\om;v))=0
\end{equation}
\end{lemma}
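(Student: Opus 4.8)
The plan is to exploit the $L^2$-orthogonal structure of the three-dimensional space $Z$ together with the conservation identity of Lemma \ref{lemma:v00.orth.property}. Only the implication ``$\Pi_{Z_N}\mF=0 \Rightarrow \Pi_Z\mF=0$'' needs an argument, the reverse being automatic since $Z_N$ is a subspace of $Z$; in fact I expect to prove the stronger statement that $\Pi_Z\mF$ and $\Pi_{Z_N}\mF$ coincide identically along the reduced manifold.

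First I would record the structure of $Z$. By construction $Z=V=Z_N\oplus V_D$, where $Z_N=V_N=\mathrm{span}\{\mathtt v_1,\mathtt v_{-1}\}$ collects the two modes at frequency $\ell_*$ and $V_D=\mathrm{span}\{\mathtt v_0\}$ is spanned by $\mathtt v_0=(0,\ph_{0,0})$ from \eqref{nondeg.space}. Since the Fourier functions $\ph_{\ell_*,\pm1}$ and $\ph_{0,0}$ are mutually $L^2$-orthogonal, the two summands $Z_N$ and $V_D$ are orthogonal in $(L^2)^2$; as $\Pi_Z$ is the orthogonal projection onto $Z=R^\perp$, it splits as $\Pi_Z=\Pi_{Z_N}+\Pi_{V_D}$, where $\Pi_{V_D}$ is the orthogonal projection onto the line $\mathrm{span}\{\mathtt v_0\}$.

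The crux is to show that $\Pi_{V_D}\mF$ vanishes identically along the reduced manifold. Since $\ph_{0,0}=C_0$ is a nonzero constant by the definition \eqref{Fourier.basis}, the generator $\mathtt v_0=(0,\ph_{0,0})$ is proportional to $(0,1)$. Hence, for every $(\om,v)\in\mU_{\epsilon_0}$, setting $u=v+w(\om;v)$, one has
\[
\la \mathtt v_0,\mF(\om;u)\ra_{(L^2)^2}=C_0\,\la (0,1),\mF(\om;u)\ra_{(L^2)^2}=0,
\]
the last equality being exactly Lemma \ref{lemma:v00.orth.property}. Consequently $\Pi_{V_D}\mF(\om;u)=|\mathtt v_0|^{-2}\,\la \mathtt v_0,\mF(\om;u)\ra_{(L^2)^2}\,\mathtt v_0=0$.

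Combining the two steps gives $\Pi_Z\mF(\om;u)=\Pi_{Z_N}\mF(\om;u)$ for all $(\om,v)\in\mU_{\epsilon_0}$, so the two projections vanish simultaneously, which is precisely the equivalence \eqref{nondeg.space.needed}. There is no genuine obstacle: the lemma is an immediate corollary of the orthogonality relation of Lemma \ref{lemma:v00.orth.property}, and the only point to verify is that $\mathtt v_0$ really points in the $(0,1)$ direction, i.e.\ that $\ph_{0,0}$ is constant, which is built into the Fourier basis \eqref{Fourier.basis}.
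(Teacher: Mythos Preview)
Your proof is correct and follows exactly the approach the paper intends: the paper states the lemma as ``a direct consequence of Lemma~\ref{lemma:v00.orth.property}'' without further detail, and you have simply spelled out that consequence by splitting $\Pi_Z=\Pi_{Z_N}+\Pi_{V_D}$ and using $\la(0,1),\mF\ra_{(L^2)^2}=0$ to kill the $\mathtt v_0$-component.
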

From now on, recalling the functional $\mE$ defined in \eqref{lagrangian.functional} we call
\begin{align}&\mU_{\epsilon_0}^{(N)}:=\mU_{\epsilon_0}\cap(\R\times V_N), \label{new.ball}
\\&f(\om;v):=\Pi_{Z_N}\mF(\om;v+w(\om;v)),
\qquad g(\om;v):=(\grad\mI)(v+w(\om;v)), \label{function.for.om(v)}
\\&F(\om;v):=\la f(\om;v),g(\om;v)\ra_{(L^2)^2} \label{F.def.}
\end{align}

We now want to perform a convenient choice of $\om=\om(v)$. To do this, we need the following technical lemma:

\begin{lemma}\label{lemma:technicalities} Let $(\om,v)\in\mU_{\epsilon_0}^{(N)}$. Then, the following facts hold.

$(i)$ $f\colon\,\mU_{\epsilon_0}^{(N)}\longmapsto Z_N$ and $g\colon\,\mU_{\epsilon_0}^{(N)}\longmapsto(L^2)^2$ are both analytic.

$(ii)$ One has 
\begin{align}
f(\om, 0) & = 0, 
\quad 
\pa_v f(\om_*,0) = 0, 
\quad 
\pa_{\om v} f(\om_*,0) 
= - \Pi_{Z_N} J_0 \pa_\th,
\label{der.f}
\\
g(\om, 0) & = 0, 
\quad 
\pa_v g(\om_*,0) 
= d (\grad \mI)(0) 
= J_0 \pa_\th
\label{der.g}
\end{align}
for any $|\om - \om_*| < \e_0$, 
where $J_0$ is defined in \eqref{J0}, 
and, 
\begin{align}
& F(\om, 0) = 0, 
\quad \ 
\pa_v F(\om, 0) = 0, 
\quad \ 
\pa_{vv} F(\om_*,0) = 0, 
\notag 
\\ 
& \pa_\om\{d^2F(\om_*,0) [\tilde v, \tilde v]\} 
= - | \Pi_{Z_N} J_0 \pa_\th \tilde v |^2 
\quad \forall \tilde v \in V.
\label{der.F}
\end{align}
$(iii)$ Let $v \in V$, with 
$v = v_{1}\mathtt v_{1} + v_{-1}\mathtt v_{-1}$, 
where $\mathtt{v}_{1},\mathtt v_{-1}$ are defined in \eqref{nondeg.space}. Then 
\begin{align}
&J_0 \pa_\th v 
= \ell_*(1+\om_*^2)^{-\frac12}\Big[v_{1}\begin{pmatrix}
\omega_* \ph_{\ell_*,1} \\ 
- \ph_{\ell_*,-1}
\end{pmatrix} + v_{-1}\begin{pmatrix}
\omega_* \ph_{\ell_*,-1} \\ 
 \ph_{\ell_*,1}
\end{pmatrix}\Big],
\label{formula.J0.mM.v} 
\\ 
&\Pi_{Z_N} J_0 \pa_\th v  
= 2\om_*\ell_*(1+\om_*^2)^{-\frac12}[v_1\mathtt v_{1} + v_{-1}\mathtt v_{-1}], \label{projection.onto.zn.I}
\\
&|\Pi_{Z_N}J_0\pa_\th v|^2
= 4\om_*^2\ell_*^2(1+\om_*^2)^{-1}[|v_1|^2 + |v_{-1}|^2].
\label{approx.I.Z.norm}
\end{align}

\end{lemma}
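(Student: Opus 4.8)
The plan is to treat the three parts in turn, leaning on the analyticity of $\mF$ (Lemma \ref{ww.rotating.wave.eq}) and of $w(\om;v)$ (Lemma \ref{Range.eq.solv.}), on the explicit form of the linearisation $\mL_\om=\pa_u\mF(\om;0)$ in \eqref{lin.op.}, and on the vanishing data $w(\om;0)=0$, $\pa_v w(\om_*;0)=0$ from \eqref{w.values}. For part $(i)$ I would simply note that $f=\Pi_{Z_N}\circ\mF\circ(v\mapsto v+w(\om;v))$ is a composition of analytic maps, since $\mF$ is analytic by Lemma \ref{ww.rotating.wave.eq}, $w$ is analytic by Lemma \ref{Range.eq.solv.}, and $\Pi_{Z_N}$ is bounded linear; likewise $g=\grad\mI\circ(v+w(\om;v))$ is analytic because $\grad\mI(\xi,\chi)=(-e^{2\xi}\chi',e^{2\xi}\xi')$ is an entire Nemytskii map.

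For part $(ii)$, the engine is the chain and product rule together with the vanishing of the low-order data at $(\om_*,0)$. First $\mF(\om;0)=0$ for every $\om$ (the static circle is a critical point of $\mE$ for all $\om$; alternatively read it off \eqref{F.1}--\eqref{F.2}), so with $w(\om;0)=0$ one gets $f(\om;0)=\Pi_{Z_N}\mF(\om;0)=0$ and $g(\om;0)=\grad\mI(0)=0$ identically in $\om$. Differentiating once, $\pa_v f(\om;0)[\tilde v]=\Pi_{Z_N}\mL_\om[\tilde v+\pa_v w(\om;0)\tilde v]$; at $\om=\om_*$ the correction drops by $\pa_v w(\om_*;0)=0$ and $\tilde v\in V=\ker\mL_{\om_*}$, giving $\pa_v f(\om_*;0)=0$. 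For the mixed derivative I would differentiate this identity in $\om$: the $\pa_v w$-term again carries the factor $\pa_v w(\om_*;0)=0$, while $\Pi_{Z_N}\mL_{\om_*}[\pa_{\om v}w(\om_*;0)\tilde v]=0$ because $\mL_{\om_*}$ maps $W^{\mathfrak s,s}$ into $R^{\mathfrak s,s}$, which is $L^2$-orthogonal to $Z_N\subseteq Z$. What survives is $\Pi_{Z_N}(\pa_\om\mL_{\om_*})\tilde v$, and the key algebraic observation is that, from \eqref{lin.op.} and \eqref{J0}, $\pa_\om\mL_\om=\begin{pmatrix}0&\pa_\th\\-\pa_\th&0\end{pmatrix}=-J_0\pa_\th$, so $\pa_{\om v}f(\om_*;0)=-\Pi_{Z_N}J_0\pa_\th$. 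For $g$, using $\pa_v w(\om_*;0)=0$ reduces $\pa_v g(\om_*;0)$ to $d(\grad\mI)(0)$, and a one-line linearisation of $(-e^{2\xi}\chi',e^{2\xi}\xi')$ at the origin gives $d(\grad\mI)(0)[\hat\xi,\hat\chi]=(-\hat\chi',\hat\xi')=J_0\pa_\th(\hat\xi,\hat\chi)$, as in \eqref{der.g}.

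The statements about $F=\la f,g\ra$ then follow by Leibniz. Since $f(\om;0)=g(\om;0)=0$ for all $\om$, both $F(\om;0)=0$ and $\pa_v F(\om;0)=0$ are immediate, and $\pa_{vv}F(\om_*;0)=0$ since every term of the second derivative still contains one of $f(\om_*;0)$, $g(\om_*;0)$, $\pa_v f(\om_*;0)$, all zero. For the last identity in \eqref{der.F} I would use $f(\cdot;0)\equiv g(\cdot;0)\equiv0$ to reduce $\pa_{vv}F(\om;0)[\tilde v,\tilde v]$ to a multiple of $\la\pa_v f(\om;0)\tilde v,\pa_v g(\om;0)\tilde v\ra$; differentiating in $\om$ at $\om_*$ and discarding the summand carrying $\pa_v f(\om_*;0)=0$, I substitute $\pa_{\om v}f(\om_*;0)=-\Pi_{Z_N}J_0\pa_\th$ and $\pa_v g(\om_*;0)=J_0\pa_\th$, and finally use that $\Pi_{Z_N}$ is an $L^2$-orthogonal projection (so $\la\Pi_{Z_N}A,A\ra=|\Pi_{Z_N}A|^2$ with $A=J_0\pa_\th\tilde v$) to arrive at the stated $\pa_\om\{d^2F(\om_*;0)[\tilde v,\tilde v]\}=-|\Pi_{Z_N}J_0\pa_\th\tilde v|^2$.

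Part $(iii)$ is a direct Fourier computation: using $\ph_{\ell_*,1}'=-\ell_*\ph_{\ell_*,-1}$ and $\ph_{\ell_*,-1}'=\ell_*\ph_{\ell_*,1}$ together with the definition \eqref{nondeg.space} of $\mathtt v_{\pm1}$, I would differentiate $v=v_1\mathtt v_1+v_{-1}\mathtt v_{-1}$ componentwise and apply $J_0$ (which swaps the two components and flips a sign) to obtain \eqref{formula.J0.mM.v}; projecting onto $Z_N=V_N=\mathrm{span}\{\mathtt v_1,\mathtt v_{-1}\}$ reduces to computing the $L^2$ inner products of the two columns in \eqref{formula.J0.mM.v} against the orthonormal pair $\mathtt v_{\pm1}$, which are diagonal and give \eqref{projection.onto.zn.I}, and squaring with $|\mathtt v_{\pm1}|=1$ gives \eqref{approx.I.Z.norm}. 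The main obstacle is the bookkeeping in part $(ii)$: correctly identifying that all contributions of the implicit function $w$ drop out at $\om_*$ (via $\pa_v w(\om_*;0)=0$ and $\Pi_{Z_N}\mL_{\om_*}|_{W^{\mathfrak s,s}}=0$) and recognising the clean identity $\pa_\om\mL_\om=-J_0\pa_\th$; once these are in place the remaining steps are mechanical.
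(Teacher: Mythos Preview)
Your proposal is correct and follows essentially the same route as the paper's proof: analyticity by composition in part $(i)$, chain-rule bookkeeping in part $(ii)$ exploiting $w(\om,0)=0$, $\pa_v w(\om_*,0)=0$ and $\Pi_{Z_N}\mL_{\om_*}=0$, and direct Fourier computation in part $(iii)$. The only cosmetic difference is that you read off $\pa_\om\mL_\om=-J_0\pa_\th$ from the matrix \eqref{lin.op.}, whereas the paper obtains the same identity via the variational structure $\mF=\grad\mH-\om\grad\mI$, giving $\pa_{\om u}\mF(\om,u)=-d(\grad\mI)(u)$ and hence $-J_0\pa_\th$ at $u=0$; these are two views of the same one-line computation.
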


\begin{proof}
$(i)$ It follows by Lemma \ref{Range.eq.solv.}.

$(ii)$ Recall that $w(\om,0) = 0$, see \eqref{w.values}. 
Hence $g(\om,0) = \grad \mI(0) = 0$ by Lemma \ref{lemma:rotation} and also the second identity in \eqref{der.g}.
One has $f(\om,0) = 0$ because $\mF(\om,0) = 0$ for all $\om$.  
Also, 
\[
df(\om,v)\tilde v 
= \Pi_{Z_N} (d\mF)(\om, v + w(\om,v)) [ \tilde v + \pa_v w(\om,v)\tilde v ].
\]
At $v=0$, this gives 
\begin{equation} \label{pav.f.om.0}
df(\om, 0)\tilde v 
= \Pi_{Z_N} d\mF(\om, 0) [ \tilde v + \pa_v w(\om, 0)\tilde v ].
\end{equation}
This implies the second identity in \eqref{der.f} because, at $\om = \om_*$, 
$\pa_u\mF(\om_*,0)$ is the operator $\mL_{\om_*}$,  
whose range is orthogonal to $Z$. 
From \eqref{pav.f.om.0} we also obtain 
\[
\pa_{\om}\{df(\om ,0)\tilde v\} 
= \Pi_{Z_N} \pa_{\om}\{d\mF(\om, 0) [ \tilde v + \pa_v w(\om, 0)\tilde v ]\}
+ \Pi_{Z_N} d\mF(\om, 0) [ \pa_{\om}\{dw(\om, 0)\tilde v\} ],
\]
and, at $\om = \om_*$, 
\[
\pa_{\om v} f(\om_*,0) 
= \Pi_{Z_N} \pa_{\om u} \mF(\om_* , 0) 
\]
because $\pa_v w(\om_*, 0) = 0$, see \eqref{w.values},
and because the range of $\pa_u\mF(\om_*, 0) = \mL_{\om_*}$ is orthogonal to $Z$.   
By the definition \eqref{def.mF} of $\mF$, 
one has $\pa_{\om u} \mF(\om,u) = - d (\grad \mI)(u)$, 
and, as already observed in the last identity of \eqref{der.g}, 
$d(\grad \mI)(0) = J_0 \pa_\th$. This proves the third identity in \eqref{der.f}. 

The identities for $F$ and its derivatives follow from \eqref{der.f}, \eqref{der.g} 
and the product rule; for the last identity, we also use the basic fact that 
$\la \Pi_{Z_N} a, a \ra = \la \Pi_{Z_N} a, \Pi_{Z_N} a \ra = | \Pi_{Z_N} a |^2$.

$(iii)$ Direct computations.
\end{proof}

Now, we are in position to make a suitable choice of $\om=\om(v)$:

\begin{lemma}
\label{lemma:choice.of.omega} 
In the assumptions of Lemma \ref{lemma:technicalities}, there exist $\e_1 \in (0, \e_0]$, $b_1, C > 0$ 
and a function $\om : B_{V_N}(\e_1) \longmapsto \R$, $v \longmapsto \om(v)$, 
where $B_{V_N}(\e_1) := \{ v \in V_N : |v| < \e_1 \}$, 
which is Lipschitz continuous in $B_{V_N}(\e_1)$, 
analytic in $B_{V_N}(\e_1) \setminus \{ 0 \}$, 
such that $\om(0) = \om_*$,
\begin{equation}  \label{IFT.om}
F(\om(v), v) = 0
\end{equation}
for all $v \in B_{V_N}(\e_1)$, 
and, if $(\om,v)$ satisfies $F(\om,v) = 0$ 
with $|\om - \om_*| < b_1$, $v \in B_{V_N}(\e_1)$, 
then $\om = \om(v)$. 
Moreover, the graph $\{ (\om(v) , v) : v \in B_{V_N}(\e_1) \} \subset \R \times V_N$ 
is contained in the open set $\mU_{\e_0} \subset \R \times V$ 
where the function $w$ constructed in Lemma \ref{Range.eq.solv.} is defined, and 
\begin{equation} \label{om.estimate}
|\om(v)-\om_*| = O(|v|)
\quad \forall v \in B_{V_N}(\e_1).
\end{equation}
Also, for all $\alpha \in \T^1$, one has 
\begin{equation} \label{om.group.action}
\om(\mT_\alpha v) = \om(v).
\end{equation}
\end{lemma}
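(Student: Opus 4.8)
The plan is to solve \eqref{IFT.om} for $\om$ as a function of $v$ by exploiting the second-order vanishing of $F$ recorded in \eqref{der.F} together with the torus equivariance. The obstruction to a direct use of the implicit function theorem is that $F(\om,0)=0$ for every $\om$ (first identity of \eqref{der.F}), so that $\pa_\om F(\om_*,0)=0$ and the linearization in $\om$ degenerates. The device that removes this degeneracy is to divide out the quadratic vanishing of $F$ in $v$; the main work is to justify that division cleanly and to read off the nondegeneracy from the Hessian data in \eqref{der.F}.

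First I would establish the invariance of $F$ itself. With $f,g$ as in \eqref{function.for.om(v)}, the equivariance \eqref{F.equiv.} of $\mF$, the equivariance \eqref{I.equiv} of $\grad\mI$, the equivariance \eqref{w.equiv.} of $w$, and the fact that $\mT_\alpha$ is an $L^2$-isometry preserving $Z_N=V_N$ (so it commutes with $\Pi_{Z_N}$, by \eqref{torus.inv.lyap.schmidt}) give $f(\om,\mT_\alpha v)=\mT_\alpha f(\om,v)$ and $g(\om,\mT_\alpha v)=\mT_\alpha g(\om,v)$; taking the pairing in \eqref{F.def.} and using the isometry once more yields $F(\om,\mT_\alpha v)=F(\om,v)$ for all $\alpha\in\T^1$. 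By Lemma \ref{Torus.action.lyap.schmidt}, $\mT_\alpha$ acts on $V_N\cong\R^2$ (in the orthonormal basis $\mathtt v_1,\mathtt v_{-1}$) as the rotation by the angle $\ell_*\alpha$, and as $\alpha$ ranges over $\T^1$ this sweeps out all of $SO(2)$. Hence $F(\om,\cdot)$ is a rotation-invariant real-analytic function on $V_N$, so it factors through $|v|^2$: there is an analytic $\Psi$ with $F(\om,v)=\Psi(\om,|v|^2)$.

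Next I would isolate the degeneracy. Since $\Psi(\om,0)=F(\om,0)=0$, analytic division gives $\Psi(\om,\rho)=\rho\,G(\om,\rho)$ with $G$ analytic. Matching the $\rho$-linear part of $\Psi$ with the quadratic term of $F$ shows $G(\om,0)=\tfrac12 d^2F(\om,0)[\hat v,\hat v]$ for any unit $\hat v\in V_N$; in particular $G(\om_*,0)=0$ because $\pa_{vv}F(\om_*,0)=0$ in \eqref{der.F}, while
\[
\pa_\om G(\om_*,0)=\tfrac12\,\pa_\om\{d^2F(\om_*,0)[\hat v,\hat v]\}
=-\tfrac12\,|\Pi_{Z_N}J_0\pa_\th\hat v|^2
=-\tfrac{2\om_*^2\ell_*^2}{1+\om_*^2}\neq0
\]
(using \eqref{der.F} and \eqref{approx.I.Z.norm} with $|\hat v|=1$, and $\om_*\neq0$). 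The analytic implicit function theorem applied to $G(\om,\rho)=0$ at $(\om_*,0)$ then produces a unique analytic branch $\rho\mapsto\om(\rho)$ with $\om(0)=\om_*$, locally unique for $|\om-\om_*|<b_1$. Setting $\om(v):=\om(|v|^2)$ gives $F(\om(v),v)=|v|^2\,G(\om(v),|v|^2)=0$, which is \eqref{IFT.om}; as $|v|^2$ is a polynomial in $v$, the map $\om(\cdot)$ is analytic on a whole ball $B_{V_N}(\e_1)$, hence in particular analytic on $B_{V_N}(\e_1)\setminus\{0\}$ and Lipschitz. The estimate \eqref{om.estimate} follows from $\om(v)-\om_*=\om(|v|^2)-\om(0)=O(|v|^2)$, the equivariance \eqref{om.group.action} is immediate from $|\mT_\alpha v|=|v|$, and the graph is placed inside $\mU_{\e_0}$ by shrinking $\e_1$. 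For the uniqueness clause, I would note that for $v\neq0$ one has $F(\om,v)=0\iff G(\om,|v|^2)=0$ (since $|v|^2>0$), so the implicit-function uniqueness forces $\om=\om(v)$; the point $v=0$ is degenerate, with every $\om$ solving $F(\om,0)=0$, and is handled by the convention $\om(0)=\om_*$.

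The main obstacle is exactly the vanishing $\pa_\om F(\om_*,0)=0$ that forbids a direct implicit-function argument, and the factorization $F(\om,v)=|v|^2G(\om,|v|^2)$ is what overcomes it; its validity rests on the rotation-invariance of $F(\om,\cdot)$ furnished by Lemma \ref{Torus.action.lyap.schmidt}. Should one prefer to mirror the more general framework of \cite{BLS} without invoking full rotation-invariance, the same conclusion is reached in polar coordinates $v=r\theta$, $|\theta|=1$: the quotient $F(\om,r\theta)/r^2$ extends analytically across $r=0$ (as $F$ vanishes to second order in $r$), its $\om$-derivative at $(\om_*,0,\theta)$ equals $-\tfrac12|\Pi_{Z_N}J_0\pa_\th\theta|^2\neq0$ uniformly in $\theta$, and the implicit function theorem with parameter $\theta$ yields $\om(r,\theta)$ analytic jointly; Lipschitz regularity at the origin then comes from $\pa_\theta\om(0,\theta)=0$ (a consequence of $d^2F(\om_*,0)=0$), and \eqref{om.group.action} again follows from the local uniqueness together with $F(\om,\mT_\alpha v)=F(\om,v)$.
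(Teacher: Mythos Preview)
Your argument is correct; your primary route differs from the paper's and is in fact a bit cleaner in this two–dimensional setting. The paper does not invoke the factorization $F(\om,v)=\Psi(\om,|v|^2)$; instead it passes to polar coordinates $\rho=|v|$, $y=v/|v|$, defines $\Phi(\om,\rho,y):=\rho^{-2}F(\om,\rho y)$, removes the singularity at $\rho=0$ via the second–order vanishing of $F$, and then applies the analytic implicit function theorem to $\Phi$ at each $y_0$ on the unit circle, obtaining $\Om(\rho,y)$ and setting $\om(v):=\Om(|v|,v/|v|)$. Uniformity in $y$ comes from compactness of the sphere, Lipschitz regularity at $0$ from $|\Om(\rho,y)-\Om(0,y)|\le C\rho$, and \eqref{om.group.action} from the uniqueness clause of the implicit function theorem together with $F(\om,\mT_\alpha v)=F(\om,v)$ — exactly the alternative you sketch in your final paragraph.

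Your approach exploits more structure: because $\dim V_N=2$ and, by \eqref{torus.action.v.var}, the torus action on $V_N$ sweeps out all of $SO(2)$, rotation invariance lets you reduce to a genuinely one–variable problem in $\rho=|v|^2$. This buys you two small improvements over what the lemma claims: $\om(v)=\om(|v|^2)$ is analytic on the \emph{whole} ball (not merely on $B_{V_N}(\e_1)\setminus\{0\}$), and \eqref{om.estimate} sharpens to $|\om(v)-\om_*|=O(|v|^2)$. The paper's polar–coordinate argument, by contrast, is agnostic to the dimension of $V_N$ and to whether the group action is transitive on spheres; it is the argument one would use verbatim in the higher–dimensional settings of \cite{BLS,Craig.Nicholls}. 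Your observation about the degeneracy at $v=0$ in the uniqueness clause is also accurate; the paper's proof is silent on this point.
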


\begin{proof} 
By \eqref{der.F}, the expansion of the analytic function $v \longmapsto F(\om,v)$ is 
\begin{equation} \label{F.Taylor.in.v}
F(\om,v) = \frac12 d^2 F(\om, 0)[v, v] + O(|v|^3).
\end{equation} 
Consider $v \in V_N$, 
and introduce polar coordinates 
$\rho = |v|$, $y = v / |v|$ on $V_N \setminus \{ 0 \}$. 
The function  
\[
\Phi(\om, \rho, y) := \rho^{-2} F(\om, \rho y)
\]
is defined for $|\om - \om_*| < \e_0$, 
$\rho \in (0, \e_0)$, $y$ in the unit sphere $\{ |y| = 1 \}$ of $V_N$.
In fact, replacing $v$ with $\rho y$ in the converging Taylor series of $F(\om,v)$ 
centered at $(\om_*, 0)$, 
one obtains that $\Phi(\om, \rho, y)$ is a well-defined, converging power series 
in the open set $\mD := \{ (\om, \rho, y) : 
|\om - \om_*| < \e_0$, $\rho \in (0, \frac12 \e_0)$, $|y| < 2\}$. 
By \eqref{F.Taylor.in.v}, $\Phi(\om, \rho, y)$ converges to $\frac12 \pa_{vv} F(\om,0)[y,y]$ 
as $\rho \to 0$, and therefore $\Phi$ has a removable singularity at $\rho = 0$. 
Hence $\Phi$ has an analytic extension to the open set 
$\mD_1 := \{ (\om, \rho, y) : |\om - \om_*| < \e_0$, $|\rho| < \frac12 \e_0$, $|y| < 2\}$. 
We also denote this extension by $\Phi$. 
By \eqref{der.F} and \eqref{approx.I.Z.norm}, one has 
\begin{align*}
\Phi(\om_*, 0, y_0) 
& = \tfrac12 dF(\om_*, 0)[y_0, y_0] = 0, 
\\ 
\pa_\om \Phi (\om_*, 0, y_0) 
& = \tfrac12 \pa_\om\{d^2 F(\om_*, 0)[y_0, y_0]\} 
= - \tfrac12 |\Pi_{Z_N} J_0 \pa_\th y_0|^2
\neq 0
\end{align*}
for any $|y_0| = 1$. 
Hence, by the implicit function theorem for analytic functions, 
there exists a function $\Om(\rho, y)$ such that $\Om(0, y_0) = \om_*$ and 
\[
\Phi( \Om(\rho, y), \rho, y) = 0
\]
for all $(\rho,y)$ with $|\rho| < \e_1$, $|y - y_0| < \e_1$, 
for some $\e_1 \in (0, \e_0]$. Moreover, $\e_1$ is independent of $y_0$  
because the unit sphere of $V_N$ is compact. 
Finally, given $v \in V_N$, $|v| < \e_1$, we define 
$\om(v) := \Om(\rho, y)$ with $\rho = |v|$ and $y = v/|v|$ for $v \neq 0$,
and $\om(0) := \om_*$. 
The Lipschitz estimate \eqref{om.estimate} holds because 
$|\om(v) - \om_*| = |\Om(\rho,y) - \Om(0,y)| \leq C \rho$. 

By \eqref{function.for.om(v)}, 
the first identity in \eqref{torus.inv.lyap.schmidt}, 
the first one in \eqref{F.equiv.},
and \eqref{w.equiv.},
one has that for any $\alpha\in\T^1$,
\[
\mT_\alpha f(\om, v) = f(\om, \mT_\alpha v), \quad \ 
\mT_\alpha g(\om, v) = g(\om, \mT_\alpha v).
\]
Hence, by \eqref{F.def.}, one has 
\begin{equation} \label{F.group.action}
F(\om, \mT_\alpha v) = F(\om, v)
\end{equation}
because $\mT_\alpha^*  = \mT_\alpha^{-1}$.
By \eqref{IFT.om} with $\mT_\alpha v$ in place of $v$, one has 
$F(\om(\mT_\alpha v), \mT_\alpha v) = 0$. On the other hand, 
by \eqref{F.group.action}, one also has that 
$F(\om(\mT_\alpha v), \mT_\alpha v) = F(\om(\mT_\alpha v), v)$. 
Therefore 
\[
F(\om(\mT_\alpha v), v) = 0,
\]
and, by the uniqueness property of the implicit function, 
we obtain \eqref{om.group.action}. 
\end{proof}

Now, given $B_{V_N}(\e_1)$ as in Lemma \ref{lemma:choice.of.omega}, let us define the following objects for all $v\in B_{V_N}(\e_1)$ and $a\in\R$:
\begin{align}&u(v):=v+w(\om(v);v), \label{u.v.function}
\\&\mH_N(v):=\mH(u(v)),\qquad\mI_N(v):=\mI(u(v)),\qquad\mE_N(u(v)):=\mE(u(v)), \label{restricted.functionals}
\\&\mS_N(a):=\{v\in B_{V_N}(\e_1)\colon\,\mI_N(v)=a\}.  \label{restricted.constraint}
\end{align}
These quantities are nothing less but those already introduced before but restricted to $V_N$; the new one is only the constraint $\mS_N(a)$ of the conservation of the angular momentum restricted on $V_N$. The following lemma describes the regularity of \eqref{u.v.function} and the functionals in \eqref{restricted.functionals}:

\begin{lemma}\label{lemma:regularity}
Given the notations and the assumptions made above, the following facts hold.

$(i)$ The function $B_{V_N}(\e_1) \to W^{\mathfrak{s},s}$, $v \longmapsto u(v)$ 
is analytic in $B_{V_N}(\e_1) \setminus \{ 0 \}$,
it is of class $C^1 ( B_{V_N}(\e_1) )$, 
and its differential is Lipschitz continuous in $B_{V_N}(\e_1)$. 
Moreover $w(\om(\mT_\alpha v), \mT_\alpha v) = \mT_\alpha w(\om(v), v)$ for all $\alpha \in \T^1$. Same holds for $u(v)$.

$(ii)$ The functionals $\mH_N : B_{V_N}(\e_1) \longmapsto \R$ 
and $\mI_N : B_{V_N}(\e_1) \longmapsto \R$ 
are analytic in $B_{V_N}(\e_1) \setminus \{ 0 \}$
and of class $C^2(B_{V_N}(\e_1))$, 
and their second order differentials are Lipschitz continuous in $B_{V_N}(\e_1)$.
Moreover $\mH_N \circ \mT_\alpha = \mH_N$ 
and $\mI_N \circ \mT_\alpha = \mI_N$ for all $\alpha \in \T^1$. 
    
\end{lemma}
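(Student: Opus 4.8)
The plan is to lean on two facts proved just before: the joint analyticity of the range–equation solution map $(\om,v)\mapsto w(\om,v)$ (Lemma \ref{Range.eq.solv.}) and the analyticity off the origin together with Lipschitz continuity of the multiplier $v\mapsto\om(v)$ (Lemma \ref{lemma:choice.of.omega}). With these, the statements away from $0$ are immediate: on $B_{V_N}(\e_1)\setminus\{0\}$ the maps $v\mapsto(\om(v),v)$, $w$, and the functionals $\mH,\mI$ are all analytic, so $u=v+w(\om(v),v)$ (see \eqref{u.v.function}), $\mH_N=\mH\circ u$ and $\mI_N=\mI\circ u$ are analytic there. The symmetry identities are equally direct: from $\om(\mT_\alpha v)=\om(v)$ (eq. \eqref{om.group.action}) and $w(\om,\mT_\alpha v)=\mT_\alpha w(\om,v)$ (eq. \eqref{w.equiv.}) one gets $u(\mT_\alpha v)=\mT_\alpha u(v)$, whence $\mH_N\circ\mT_\alpha=\mH_N$ and $\mI_N\circ\mT_\alpha=\mI_N$ by the $\mT_\alpha$–invariance of $\mH,\mI$ (eqs. \eqref{H.trans.inv}, \eqref{I.equiv}). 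The one real point is the regularity at $v=0$, where $\om$ is only Lipschitz. The structural observation that unlocks everything is that $V_N=\mathrm{span}\{\mathtt v_1,\mathtt v_{-1}\}$ is two–dimensional and, by Lemma \ref{Torus.action.lyap.schmidt}, $\mT_\alpha$ rotates $(\mathtt v_1,\mathtt v_{-1})$ by the angle $\ell_*\alpha$; as $\alpha$ runs over $\T^1$ this sweeps out all rotations, so the $\mT_\alpha$–orbits in $V_N$ are the circles $\{|v|=\mathrm{const}\}$, and every $\mT_\alpha$–invariant quantity depends on $\rho:=|v|$ alone. In particular $\om(v)=\tilde\om(|v|)$, where, recalling the construction of $\om$ in Lemma \ref{lemma:choice.of.omega} via an analytic function $\Om(\rho,y)$ defined for $\rho$ in a neighbourhood of $0$ (negative values allowed), I set $\tilde\om(\rho):=\Om(\rho,y_0)$ for a fixed unit vector $y_0$; thus $\tilde\om$ is a genuine one–variable analytic function near $\rho=0$.

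For $u$ I would argue by the polar structure. Writing $\mathcal W(\rho,v):=w(\tilde\om(\rho),v)$, which is jointly analytic, one has $u(v)=v+\mathcal W(|v|,v)$. From $w(\om,0)=0$ and $\pa_v w(\om_*,0)=0$ (eq. \eqref{w.values}) one reads off $\mathcal W(\rho,0)=0$, $\pa_\rho\mathcal W(\rho,0)=0$ and $\pa_v\mathcal W(\rho,0)=O(\rho)$, so that $\mathcal W(|v|,v)=O(|v|^2)$ and $\pa_\rho\mathcal W(|v|,v)=O(|v|)$. Differentiating $v\mapsto\mathcal W(|v|,v)$, each derivative hitting $|v|$ or $v/|v|$ produces at most a factor $|v|^{-1}$, and the order–$|v|$ gains above exactly offset these singular factors: the first differential extends continuously to the origin with $du(0)=\mathrm{incl}_{V_N}$, and the second differential stays bounded on $B_{V_N}(\e_1)\setminus\{0\}$. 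Hence $u\in C^1(B_{V_N}(\e_1))$ with Lipschitz differential, analytic off $0$, with the equivariance already noted.

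For the functionals I would avoid the cancellation bookkeeping and instead exploit invariance to reduce to a single radial variable. Since $\mH_N,\mI_N$ are $\mT_\alpha$–invariant, they depend only on $\rho=|v|$: fixing $e_0\in V_N$ with $|e_0|=1$, the functions $\mathcal H_N(\rho):=\mH(u(\rho e_0))$ and $\mathcal I_N(\rho):=\mI(u(\rho e_0))$, which equal $\mH(\rho e_0+\mathcal W(\rho,\rho e_0))$ and $\mI(\rho e_0+\mathcal W(\rho,\rho e_0))$, are analytic in $\rho$ near $0$, and $\mH_N(v)=\mathcal H_N(|v|)$, $\mI_N(v)=\mathcal I_N(|v|)$. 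Moreover $\mathcal H_N'(0)=\grad\mH(u(0))\cdot\tfrac{d}{d\rho}\big|_0 u(\rho e_0)=\grad\mH(0)\cdot(\cdots)=0$, using $\grad\mH(0)=0$ (which follows from \eqref{pa.xi.H}–\eqref{pa.chi.H}); likewise $\mathcal I_N'(0)=0$ since $\grad\mI(0)=0$ (Lemma \ref{lemma:rotation}). It then remains to invoke the elementary fact that a radial map $v\mapsto\mathcal G(|v|)$ on the plane $V_N$, with $\mathcal G$ analytic and $\mathcal G'(0)=0$, is of class $C^2$ with Lipschitz second differential: the even part of $\mathcal G$ yields an analytic function of $|v|^2$, while the odd part factors through $|v|^3$, and $|v|^3$ has Lipschitz Hessian near the origin. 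Applying this to $\mathcal H_N$ and $\mathcal I_N$ gives exactly the claimed $C^2$ regularity and Lipschitz second differentials.

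The main obstacle, as signalled, is the lack of naive differentiability at $v=0$: both $\om$ and the polar coordinates $(\rho,v/|v|)$ are only Lipschitz there, so term-by-term differentiation of $u(v)=v+w(\om(v),v)$ or of $\mH\circ u$ is not licit. The resolution is the rotational reduction: it forces $\om$ to be an analytic function of $\rho$ alone, converts the two invariant functionals into analytic functions of the single variable $\rho=|v|$, and thereby reduces the delicate $C^2$/Lipschitz-Hessian claim to the above radial fact, while the equivariant map $u$ is handled by the explicit compensation of the $|v|$–gains against the $|v|^{-1}$–singularities.
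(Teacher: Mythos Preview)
Your proof is correct, but it takes a genuinely different route from the paper's. The paper never uses the transitivity of $\mT_\alpha$ on circles in $V_N$; it treats $\om(v)$ as merely Lipschitz throughout and argues by direct chain rule. For $(i)$ it writes, for $v\neq 0$,
\[
d\{w(\om(v),v)\}\tilde v=(\pa_\om w)(\om(v),v)\,d\om(v)\tilde v+(\pa_v w)(\om(v),v)\tilde v,
\]
observes that $(\pa_\om w)(\om(v),v)\to 0$ and $(\pa_v w)(\om(v),v)\to 0$ while $d\om(v)$ stays bounded, and combines with the estimate $\|w(\om(v),v)\|=O(|v|^2)$ to get $C^1$ with Lipschitz differential. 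For $(ii)$ it differentiates $\mI_N$ twice via the chain rule and uses that $d\mI(u(v))\to d\mI(0)=0$ exactly compensates the fact that $d^2u(v)$ is only bounded (not convergent) as $v\to 0$.

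Your approach instead upgrades $\om$ to an analytic function $\tilde\om(\rho)$ of the single variable $\rho=|v|$ via the transitivity of the $\mT_\alpha$--action on $V_N\simeq\R^2$, and then for $(ii)$ reduces $\mH_N,\mI_N$ to one-variable analytic functions $\mathcal H_N(\rho),\mathcal I_N(\rho)$ with vanishing first derivative at $0$, finishing with the elementary fact that $v\mapsto\mathcal G(|v|)$ is $C^2$ with Lipschitz Hessian whenever $\mathcal G$ is analytic with $\mathcal G'(0)=0$. This radial reduction is a clean shortcut for $(ii)$, but it relies on $\dim V_N=2$ (so the torus action is transitive on spheres), which the paper's argument does not need. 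For $(i)$, on the other hand, your polar-coordinate bookkeeping (tracking $|v|^{-1}$ singularities against $O(|v|)$ gains) is a bit heavier than the paper's direct route, which avoids ever introducing $|v|$ and simply uses the Lipschitz bound on $d\om$.
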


\begin{proof}
$(i)$ The function $v \longmapsto w(\om(v),v)$ is Lipschitz continuous in $B_{V_N}(\e_1)$
and analytic in $B_{V_N}(\e_1) \setminus \{ 0 \}$ 
by Lemmas \ref{Range.eq.solv.} and \ref{lemma:choice.of.omega}.
By \eqref{w.est.} and \eqref{om.estimate}, one has 
\begin{equation} \label{w.comp.estimate}
\| w(\om(v), v) \| = O(|v|^2)
\end{equation}
for all $v \in B_{V_N}(\e_1)$. 
Hence the function $v \longmapsto w(\om(v),v)$ is differentiable at $v=0$ 
with zero differential. 
Its differential at any point $v \in B_{V_N}(\e_1) \setminus \{ 0 \}$ 
in direction $\tilde v \in V_N$ is 
\begin{equation}  \label{der.w.comp}
d\{ w( \om(v) , v) \}\tilde v 
= d w( \om(v) , v)[d\om(v)\tilde v] 
+ dw( \om(v), v)\tilde v.
\end{equation}
For $v \to 0$, one has 
$(\om(v), v) \to (\om_*,0)$ because the function $\om(v)$ in Lemma \ref{lemma:choice.of.omega}
is continuous. Moreover,
$(\pa_v w)(\om(v), v) \to \pa_v w(\om_*, 0) = 0$ 
and $(\pa_\om w)(\om(v), v) \to \pa_\om w(\om_*, 0) = 0$,  
because the function $w$ in Lemma \ref{Range.eq.solv.} is analytic. 
By \eqref{w.values}, $d\om(v)\tilde v$, which is defined for $v \neq 0$, remains bounded as $v \to 0$
because $\om(v)$ is Lipschitz. 
Hence $\pa_v \{ w( \om(v) , v) \} \to 0$, which implies that $w(\om(v), v)$ 
is of class $C^1 ( B_{V_N}(\e_1) )$. 
Moreover, 
$|d\om(v)\tilde v| \leq C |\tilde v|$ for $0 < |v| < \e_1$ %, $\tilde v \in V_N$ 
because $\om(v)$ is Lipschitz, % in $B_{V_N}(\e_1)$,
and 
\[
%|\om'(v)[\tilde v]| \leq C |\tilde v|, \quad \  
\| (\pa_\om w)( \om(v) , v) \| \leq C |v|, \quad \  
\| d w( \om(v), v)\tilde v \| \leq C |v|\cdot|\tilde v|,
\]
because the function $w$ in Lemma \ref{Range.eq.solv.} is analytic 
and by \eqref{om.estimate}.
%$|(\om(v),v) - (\om_0,0)| \leq C|v|$.  
Hence, by \eqref{der.w.comp}, 
\[
\| d\{ w( \om(v) , v) \}\tilde v \|
\leq C |v|\cdot|\tilde v|
\]
for $v \in B_{V_N}(\e_1) \setminus \{ 0 \}$, 
so that the map $v \longmapsto \pa_v \{ w( \om(v) , v) \}$ is Lipschitz continuous around $v=0$. 

The fact that $w(\om(\mT_\alpha v), \mT_\alpha v) = \mT_\alpha w(\om(v), v)$ follows from \eqref{w.equiv.} and \eqref{om.group.action}. Finally, the same regularity properties hold for $u(v)$ because of \eqref{u.v.function}.

$(ii)$ From Lemma
\ref{ww.rotating.wave.eq}, point $(i)$ and Lemma \ref{lemma:rotation}, equations \eqref{H.trans.inv} and \eqref{I.equiv}
we deduce the analyticity, the $C^1$ regularity with Lipschitz differentials 
and the invariance with respect to the group action $\mT_\alpha$ for both $\mH_N, \mI_N$. Now, we want to prove the higher regularity: we will do it for $\mI_N$, since for $\mH_N$ the computations are very similar.
By \eqref{restricted.functionals}, 
the differential of $\mI_{N}$ at a point $v \in B_{V_N}(\e_1)$ in direction $\tilde v \in V_N$ is 
\[
d\mI_{N}(v)\tilde v 
= d\mI(u(v))[ du(v)\tilde v], 
\quad \ 
du(v)\tilde v = \tilde v + d\{ w(\om(v),v) \}\tilde v.
\]
The map $v \longmapsto d\mI_{N}(v)$ is Lipschitz. 
At $v \neq 0$, its differential in direction $\tilde z \in V_N$ is 
\[
d^2\mI_{N}(v)[\tilde z, \tilde v] 
= d^2\mI(u(v))[ du(v)\tilde z, du(v)\tilde v ] 
+ d\mI(u(v))[ d^2 u(v)[ \tilde z, \tilde v] ].
\]
As $v \to 0$, one has $u(v) \to 0$, $du(v)\tilde v \to \tilde v$, 
$d\mI(u(v)) \to d\mI(0) = 0$, and $d^2\mI(u(v)) \to d^2\mI(0)$, 
while the bilinear map $d^2u(v) = d^2\{ w( \om(v), v ) \}$ 
remains bounded, i.e., $\| du(v)[ \tilde z, \tilde v] \| \leq C |\tilde z|\cdot|\tilde v|$
uniformly as $v \to 0$, because the map $v \longmapsto du(v)$ is Lipschitz. 
Hence, $d^2\mI_{N}(v)[\tilde z, \tilde v]$ converges to 
$d^2\mI(0)[\tilde z, \tilde v]$ as $v \to 0$. 
Similarly, one proves that 
\[
| d\mI_{N}(v)\tilde v - d^2\mI(0)[v, \tilde v] | 
\leq C |v|^2\cdot|\tilde v|
\]
for $v \neq 0$. This implies that the map $v \longmapsto d\mI_{N}(v)$ is differentiable at $v=0$, 
with differential $d^2\mI_{N}(0) = d^2\mI(0)$. 
Also, from the limit already proved it follows that $\mI_{N}$ is of class $C^2$. 
The Lipschitz estimate for the second order differential
\[
| d^2\mI_{N}(v)[ \tilde z, \tilde v] - d^2\mI_{N}(0)[ \tilde z, \tilde v] | 
\leq C |v|\cdot|\tilde z|\cdot|\tilde v|
\]
is proved similarly.
    
\end{proof}

The following lemma describes the topological properties of the constraint $\mS_{V_N}(a)$, which can be obtained thanks to an asymptotic description of $\mI_N$ close to $v=0$:
\begin{lemma}\label{constraint.top.desc.}
In the above notations and assumptions, one has the following facts.

$(i)$ The functional $\mI_{N} : B_{V_N}(\e_1) \longmapsto \R$ defined in \eqref{restricted.functionals} 
satisfies
\begin{equation}\label{IZ.approx}
\mathcal{I}_{N} (v) = \mathcal{I}_0 (v) + R(v), 
\quad \ |R(v)| = O(|v|^3),
\end{equation}
where 
\begin{equation} \label{def.mI.0}
\mathcal{I}_0(v) := 
\frac12 \la J_0 \pa_\th v , v \ra_{(L^2)^2} 
= \om_*\ell_*(1+\om_*^2)^{-1}[|v_1|^2 + |v_{-1}|^2],
\end{equation}
for $v = v_1\, \mathtt v_{1} + v_{-1}\mathtt v_{-1}$, 
where $J_0$ is defined in \eqref{J0}.

$(ii)$ There exist a constant $\e_2 \in (0, \e_1]$ 
and a map $\psi : B_{V_N}(\e_2) \longmapsto B_{V_N}(\e_1)$ such that 
\begin{equation} \label{mI.VN.psi}
\mI_{N}(\psi(v)) = |v|^2 
\end{equation}
for all $v \in B_{V_N}(\e_2)$. 
The map $\psi$ is analytic in $B_{V_N}(\e_2) \setminus \{ 0 \}$, 
it is of class $C^1( B_{V_N}(\e_2) )$, with Lipschitz differential,  
it is a diffeomorphism of $B_{V_N}(\e_2)$ onto its image $\psi( B_{V_N}(\e_2) )$, 
which is an open neighborhood of $\psi(0)=0$ contained in $B_{V_N}(\e_1)$,   
and 
\begin{equation} \label{psi.group.action}
\psi \circ \mT_\alpha = \mT_\alpha \circ \psi
\end{equation}
for all $\alpha \in \T$. 
Moreover, there exists $a_0 > 0$ such that, for all $a \in (0, a_0)$, 
the set $\mS_{N}(a)$ in \eqref{restricted.constraint} is 
\[
\mS_{N}(a) = \psi \big( \mathtt{S}(a) \big) 
\]
where
\begin{equation} \label{def.mS.0.VN.a}
\mathtt{S}(a) := \{ v \in V_N : |v|^2 = a \},
\end{equation}
%Thus the set $\mS_{V_N}(a)$ in \eqref{reduced.constraint}
and thus $\mS_{N}(a)$ is an analytic connected compact manifold of dimension $1$ embedded in $V_N$.
%, diffeomorphic to $\mS_{0,V_N}(a)$.
Its tangent and normal space at a point $v \in \mS_{N}(a)$ are
\begin{align} 
T_v( \mS_{N}(a) ) 
& = \{ \tilde v \in V_N : \la \grad \mI_{N} (v) , \tilde v \ra_{(L^2)^2} = 0 \}, 
\label{tangent.space}
\\
N_v( \mS_{N}(a) ) 
& = \{ \lm \grad \mI_{N} (v) : \lm \in \R \},
\label{normal.space}
\end{align}
where $\grad \mI_{N}(v) \neq 0$ for all $v \in \mS_{N}(a)$, all $a \in (0, a_0)$.

\end{lemma}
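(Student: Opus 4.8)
The plan is to handle part $(i)$ by a second-order Taylor expansion of $\mI_{N}$ at the origin, and part $(ii)$ by exploiting the rotational symmetry of $\mI_{N}$ on $V_N$ to reduce the construction of $\psi$ to a one-dimensional radial rescaling.

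For $(i)$, recall from Lemma \ref{lemma:regularity}, point $(ii)$, that $\mI_{N}$ is of class $C^2$ on $B_{V_N}(\e_1)$ with Lipschitz-continuous second differential and that $d^2 \mI_{N}(0) = d^2 \mI(0)$. Since $u(0)=0$, $\mI(0)=0$ and $\grad\mI(0)=0$ by Lemma \ref{lemma:rotation}, point $(iii)$, while $du(0)=\mathrm{Id}$, one gets $\mI_{N}(0)=0$ and $d\mI_{N}(0)=d\mI(0)\circ du(0)=0$. Hence the second-order Taylor formula together with the Lipschitz bound on $d^2\mI_{N}$ yields $\mI_{N}(v)=\tfrac12 d^2\mI(0)[v,v]+R(v)$ with $|R(v)|=O(|v|^3)$. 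Using \eqref{der.g}, i.e. $d(\grad\mI)(0)=J_0\pa_\th$, one identifies $\tfrac12 d^2\mI(0)[v,v]=\tfrac12\la J_0\pa_\th v,v\ra_{(L^2)^2}=\mI_0(v)$, and the explicit value \eqref{def.mI.0} then follows from \eqref{formula.J0.mM.v} and the $(L^2)^2$-orthonormality of $\{\mathtt v_{1},\mathtt v_{-1}\}$; in particular $\mI_0(v)=c|v|^2$ with $c:=\om_*\ell_*(1+\om_*^2)^{-1}>0$ (recall $\om_*\neq0$, so $\ell_*\ge2$). This proves $(i)$.

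For $(ii)$, the key observation is that $\mI_{N}$ is \emph{radial} on $V_N$. Indeed $\mI_{N}\circ\mT_\alpha=\mI_{N}$ for all $\alpha$ by Lemma \ref{lemma:regularity}, point $(ii)$, while by \eqref{torus.action.v.var} the map $\mT_\alpha$ acts on the orthonormal coordinates $(v_1,v_{-1})$ as the rotation of angle $\ell_*\alpha$; since $\ell_*\in\N$, as $\alpha$ runs over $\T^1$ these rotations exhaust $SO(2)$, so $\mI_{N}(v)=\Psi(|v|)$ for a scalar $\Psi$. By $(i)$, $\Psi(\rho)=c\rho^2+O(\rho^3)$ is $C^2$ with $\Psi'(0)=0$, $\Psi''(0)=2c>0$, and analytic for $\rho>0$. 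Then $\chi(\rho):=\sqrt{\Psi(\rho)}$ is increasing, of class $C^1$ near $0$ with $\chi'(0)=\sqrt c>0$, and analytic for $\rho>0$; its inverse $\chi^{-1}$ has the same regularity with $(\chi^{-1})'(0)=c^{-1/2}$. I would then set
\[
\psi(v):=\chi^{-1}(|v|)\,\frac{v}{|v|}\quad(v\neq0),\qquad \psi(0):=0,
\]
so that $\mI_{N}(\psi(v))=\Psi(\chi^{-1}(|v|))=|v|^2$, which is \eqref{mI.VN.psi}, and \eqref{psi.group.action} is immediate since $\chi^{-1}(|v|)$ is rotation-invariant while $v/|v|\mapsto \mT_\alpha v/|v|$.

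The remaining assertions are routine consequences of the radial structure. Writing $\psi(v)=g(|v|)\,v$ with $g(\rho)=\chi^{-1}(\rho)/\rho$ and $g(0)=c^{-1/2}$, one checks from the regularity of $\Psi$ that $\psi\in C^1(B_{V_N}(\e_2))$ with $d\psi(0)=c^{-1/2}\,\mathrm{Id}$ invertible and Lipschitz differential, and analytic on $B_{V_N}(\e_2)\setminus\{0\}$; the inverse function theorem then makes $\psi$, after shrinking $\e_2$, a diffeomorphism onto an open neighbourhood of $0$ contained in $B_{V_N}(\e_1)$. For $a$ in a small interval $(0,a_0)$ the whole level set $\{\mI_{N}=a\}$ lies in this neighbourhood, since $\mI_{N}(v)=c|v|^2+O(|v|^3)$ forces $|v|\sim\sqrt{a/c}$; hence $\mS_{N}(a)=\psi(\mathtt{S}(a))$ is the diffeomorphic image of the circle $\{|v|^2=a\}$, i.e. an analytic connected compact $1$-manifold. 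Finally $\grad\mI_{N}(v)=2cv+O(|v|^2)\neq0$ on $\mS_{N}(a)$, so $\mS_{N}(a)$ is a regular level set and \eqref{tangent.space}--\eqref{normal.space} follow. The only genuinely delicate point is this last regularity bookkeeping for $\psi$ at the origin, where the polar representation is singular and $\mI_{N}$ is merely $C^{2}$ with Lipschitz second differential rather than analytic; everything else is forced by radiality.
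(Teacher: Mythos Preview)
Your proof of part $(i)$ is essentially the same as the paper's: both Taylor-expand to second order at the origin and identify the quadratic part via $d(\grad\mI)(0)=J_0\pa_\th$.

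For part $(ii)$ your route is genuinely different and, in this setting, more direct. The paper writes $\psi(v)=(1+\mu(v))\Lambda v$ with $\Lambda$ a diagonal map normalizing $\mI_0$, and obtains the scalar correction $\mu(v)$ by a contraction-mapping argument applied to the equation $\mI_{N}((1+\mu)\Lambda v)=|v|^2$; the equivariance $\psi\circ\mT_\alpha=\mT_\alpha\circ\psi$ is then deduced \emph{a posteriori} from uniqueness of the fixed point and the invariance of the remainder $R$. You instead observe from the outset that $\mI_{N}$ is radial on $V_N$ --- because the $\mT_\alpha$-action on $V_N$ covers all of $SO(2)$ --- and reduce everything to inverting a single monotone function $\chi(\rho)=\sqrt{\Psi(\rho)}$. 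Since here the two ``eigenvalues'' of $\mI_0$ coincide (so $\Lambda$ is scalar) and $\mu(v)$ can be shown to depend only on $|v|$, both constructions yield the very same map $\psi$; your argument simply exploits the symmetry earlier and avoids the fixed-point machinery. The paper's formulation, on the other hand, is written to accommodate a diagonal but non-scalar $\Lambda$ (as would occur if $\mI_0$ were an anisotropic quadratic form), a generality not actually needed in this two-dimensional situation. Your acknowledgment that the $C^1$-with-Lipschitz-differential regularity of $\psi$ at the origin is the only delicate point is fair; it amounts to showing $\chi''$ is bounded near $0$, which follows from the Lipschitz bound on $\Psi''$ and is of the same order of difficulty as the paper's corresponding estimate for $d\psi$.
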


\begin{proof}
$(i)$ The Taylor expansion around $u=0$ 
of the analytic functional $\mI$ defined in \eqref{def.mI} is 
\begin{equation} \label{Taylor.mI}
\mI(u) = \mI_0(u) + O(|u|^3),
\end{equation} 
with $\mI_0$ defined in \eqref{def.mI.0}, 
because $\mI(0) = 0$, $\grad \mI(0) = 0$, and 
\[ 
\frac12 d^2\mI(0)[u,u] 
= \frac12 \la d(\grad \mI)(0)u, u \ra_{(L^2)^2} 
= \frac12 \la J_0 \pa_\th u , u \ra_{(L^2)^2} 
= \mI_0(u).
\]
Note that the identity $d(\grad \mI)(0) = J_0 \pa_\th$ was already observed in \eqref{der.g}.  
Recalling \eqref{w.comp.estimate}, 
plugging $u(v) = v + w(\om(v),v) = v + O(|v|^2)$ 
into the expansion \eqref{Taylor.mI} gives the first identity in \eqref{IZ.approx}. 
The identity in \eqref{def.mI.0} follows from 
the formula of $J_0 \pa_\th v$ in \eqref{formula.J0.mM.v}
and the definition \eqref{nondeg.space} of $\mathtt v_{1},\mathtt v_{-1}$. 

$(ii)$ Define the diagonal linear map 
\begin{equation} \label{def.Lm}
\Lm : V_N \longmapsto V_N, \quad 
\Lm \mathtt{v}_{m} :=\lambda= \lambda_m\mathtt{v}_{m}, \quad 
\lm_{m} := (1 + \om_*^2)^{\frac12} (\om_*\ell_*)^{-\frac12},\quad m\in\{-1,1\}.
\end{equation}  
By \eqref{IZ.approx}, one has 
\begin{equation} \label{hat.square}
\mI_0(\Lm v) = |v_{1}|^2 + |v_{-1}|^2 = |v|^2.
\end{equation}
Given $v$, we look for a real number $\mu$ such that 
\begin{equation} \label{19:22}
\mI_{N}( (1+\mu) \Lambda v ) = |v|^2. 
\end{equation}
We look for $\mu$ in the interval $[-\delta, \delta]$, 
%with $\delta \in (0, \frac14]$, 
with $\delta = \frac14$, 
and we assume that $|v| < \e_2$, where $\e_2 \in (0, \e_1]$ is such that 
$\frac54 |\Lm| \e_2 < \e_1$, so that, for $v \in B_{V_N}(\e_2)$, 
the point $(1+\mu) \Lm v$ is in the ball $B_{V_N}(\e_1)$ where $\mI_{N}$ is defined. 
Let $R$ be the remainder in \eqref{IZ.approx}. 
By \eqref{hat.square}, one has 
$\mI_0( (1+\mu) \Lm v ) = (1+\mu)^2 |v|^2$,  
and, since $\mI_{N}= \mI_0 + R$, 
\eqref{19:22} becomes 
\begin{equation} \label{19:23}
(2 \mu + \mu^2) |v|^2 + R( (1+\mu) \Lm v ) = 0.
\end{equation}
For $v \neq 0$, \eqref{19:23} is the fixed point equation $\mu = \mK(\mu)$
for the unknown $\mu$, where 
\begin{equation} \label{def.mK}
\mK(\mu) := - \frac{\mu^2}{2} - \frac{R( (1+\mu) \Lm v)}{2 |v|^2}.
\end{equation}
By the estimate in \eqref{IZ.approx}, 
for some constants $C_1, C_2$ one has
\[
| \mK(\mu) | \leq \frac{\delta^2}{2} + C_1 \e_2 \leq \delta, 
\quad \ 
| \mK'(\mu) | \leq \delta + C_2 \e_2 \leq \frac12 
\]
for all $|\mu| \leq \delta$, $|v| < \e_2$,  
provided $\e_2$ is sufficiently small, namely
$C_1 \e_2 \leq \frac12 \delta$ and $C_2 \e_2 \leq \frac14$. 
Hence, by the contraction mapping theorem, 
in the interval $[- \delta, \delta]$ there exists a unique fixed point of $\mK$, 
which we denote by $\mu(v)$. 
Hence, 
\begin{equation}  \label{def.psi.in.the.proof}
\mI_{N}( \psi (v) ) = |v|^2, \quad \ 
\psi(v) := (1 + \mu(v)) \Lm v,  
\end{equation}
for all $v \in B_{V_N}(\e_2) \setminus \{ 0 \}$. 
From the implicit function theorem % in analytic class 
applied to equation \eqref{19:23} around any pair $(v, \mu(v))$ 
it follows that the map $v \longmapsto \mu(v)$ is analytic in $B_{V_N}(\e_2) \setminus \{ 0 \}$. 
Moreover, % since $\mu(v)$ is a fixed point of $\mK$, one has 
$|\mu(v)| = |\mK(\mu(v))| \leq \frac12 \mu^2(v) + C_1 |v| \leq \frac18 |\mu(v)| + C_1 |v|$, 
whence 
\[
|\mu(v)| \leq C |v|.
\]
Thus, defining $\mu(0) := 0$, the function $\mu(v)$ is also Lipschitz in $B_{V_N}(\e_2)$. 
As a consequence, the function $\psi$ is analytic in $B_{V_N}(\e_2) \setminus \{ 0 \}$
and Lipschitz in $B_{V_N}(\e_2)$. In addition, $|\psi(v) - \psi(0) - \Lm v| \leq C |v|^2$, 
which means that $\psi$ is differentiable also at $v=0$, 
with differential $d\psi(0)\tilde v = \Lm \tilde v$. 
At $v \neq 0$, the differential is 
\[
d\psi(v)\tilde v = (d\mu(v)\tilde v) \Lm v + (1 + \mu(v)) \Lm \tilde v,
\]
and $d\psi(v) \to d\psi(0) = \Lm$ as $v \to 0$ because $\mu(v) \to 0$, $\Lm v \to 0$, 
and $|d\mu(v)\tilde v| \leq C |\tilde v|$ uniformly as $v \to 0$.
Thus, $\psi$ is of class $C^1$ in $B_{V_N}(\e_2)$. 
Moreover, 
\[
|d\psi(v)\tilde v - d\psi(0)\tilde v| \leq C |v|\cdot |\tilde v|,
\]
i.e., the differential map $v \longmapsto d\psi(v)$ is Lipschitz continuous. 

The function $\psi$ is a diffeomorphism of open sets of $V_N$, and, 
for each $a \in (0, a_0)$ sufficiently small, one has 
\begin{align*}
\mS_{N}(a) 
& = \{ v \in B_{V_N}(\e_1) : \mI_{N}(v) = a \} 
\\ 
& = \{ v = \psi(y) : y \in B_{V_N}(\e_2), \ a = \mI_{N}(v) = \mI_{N}(\psi(y)) = |y|^2 \}
\\ 
& = \psi ( \{ y \in V_N : |y|^2 = a \} ),
\end{align*}
namely $\mS_{V_N}(a)$ is the image of the sphere $\{ |y|^2 = a \}$ by the diffeomorphism $\psi$. 

By \eqref{torus.action.v.var}, we have
\begin{equation} \label{Lm.group.action} 
\Lm \mT_\alpha = \mT_\alpha \Lm.
\end{equation} 
and also
\begin{equation} \label{norm.group.action}
|\mT_\alpha v|^2 = |v|^2.
\end{equation}  
From \eqref{Lm.group.action}, \eqref{hat.square}, and \eqref{norm.group.action}, one has 
\[
\mI_0( \mT_\alpha \Lm v ) = \mI_0( \Lm \mT_\alpha v ) 
= |\mT_\alpha v|^2 = |v|^2 = \mI_0( \Lm v)
\] 
for all $v \in V_N$, 
and this implies that $\mI_0 \circ \mT_\alpha = \mI_0$ 
because $\{ \Lm v : v \in V_N \} = V_N$.
By Lemma \ref{lemma:regularity}, the functional $\mI_{V_N}$ has the same invariance property,  
and therefore the difference $R = \mI_{N} - \mI_0$ also satisfies
\begin{equation}  \label{mR.group.action}
R \circ \mT_\alpha = R
\end{equation}
for all $\alpha \in \T^1$. 
Now denote by $\mK(\mu, v)$ the scalar quantity $\mK(\mu)$ in \eqref{def.mK}. 
By uniqueness of fix point, we have proved that $\mK(\mu, v) = \mu$ if and only if $\mu=\mu(v)$. 
Moreover, by \eqref{norm.group.action} and \eqref{mR.group.action}, one has
$\mK(\mu, \mT_\alpha v) = \mK(\mu, v)$ for all pairs $(\mu, v)$, all $\alpha \in \T^1$. 
Then 
\[
\mu(v) = \mK( \mu(v), v) = \mK( \mu(v), \mT_\alpha v),
\]
whence $\mu(v) = \mu( \mT_\alpha v)$. 
This identity, together with \eqref{Lm.group.action}, 
%and the definition of $\psi$ in \eqref{def.psi.in.the.proof}, 
gives \eqref{psi.group.action}.

\end{proof}

Now, we have all the tools to prove the solvability of the bifurcation equation \eqref{red.eq.}. Let us start showing the variational nature of the bifurcation equation \eqref{red.eq.} under the choice of $\om(v)$ done in Lemma \ref{lemma:choice.of.omega}:

\begin{lemma}
In the notations and assumptions made above, the following facts hold.

$(i)$ For all $a \in \R$, the functional $\mE_{N}\colon\, B_{V_N}(\e_1) \longmapsto \R$ defined in \eqref{restricted.functionals}
is Lipschitz continuous in $B_{V_N}(\e_1)$ and analytic in $B_{V_N}(\e_1) \setminus \{ 0 \}$.
Moreover, for all $\alpha \in \T^1$, one has  
\begin{equation} \label{mE.VN.group.action}
\mE_{N} \circ \mT_\alpha = \mE_{N}.
\end{equation}

$(ii)$ For any $v \in \mS_{N}(a)$, one has 
\begin{equation} \label{grad.mE.V}
\grad \mE_{N}(v) = \Pi_{Z_N} \mF (\om(v), u(v)).
\end{equation}

$(iii)$ If $v\in\mS_N(a)$ is a constrained critical point for $\mE_N$ along the constraint $\mS_N(a)$, then for all $a\in(0,a_0)$ with $\a_0$ as in Lemma \ref{constraint.top.desc.}, one has that also 
\begin{equation}\grad\mE_N(v)=0,
\end{equation}
and so v solves the bifurcation equation \eqref{red.eq.}.

$(iv)$ There exist at least two solutions for \eqref{red.eq.}.

\end{lemma}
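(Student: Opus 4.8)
The plan is to dispatch the four points in sequence, building everything on the objects $u(v)$, $\om(v)$, $w$ and on the geometry of $\mS_N(a)$ from Lemmas \ref{Range.eq.solv.}, \ref{lemma:choice.of.omega}, \ref{lemma:regularity} and \ref{constraint.top.desc.}. For $(i)$ I would simply write $\mE_N(v) = \mE(\om(v);u(v)) = \mH_N(v) - \om(v)(\mI_N(v)-a)$ and read off the regularity by composition: $\mH_N,\mI_N$ are $C^2$ with Lipschitz second differential and analytic on $B_{V_N}(\e_1)\setminus\{0\}$ (Lemma \ref{lemma:regularity}), while $\om(v)$ is Lipschitz on $B_{V_N}(\e_1)$ and analytic off the origin (Lemma \ref{lemma:choice.of.omega}); products of bounded Lipschitz (resp.\ analytic) functions inherit these properties. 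The invariance \eqref{mE.VN.group.action} then follows from $\mH_N\circ\mT_\alpha=\mH_N$, $\mI_N\circ\mT_\alpha=\mI_N$ and $\om(\mT_\alpha v)=\om(v)$, i.e.\ \eqref{om.group.action}.

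For $(ii)$ I would differentiate $\mE_N(v)=\mH(u(v))-\om(v)(\mI(u(v))-a)$ along $\tilde v\in V_N$. The chain-rule term carrying $d\om(v)\tilde v$ is multiplied by $\mI(u(v))-a=\mI_N(v)-a$, which is zero on $\mS_N(a)$; hence, since $\mF=\grad\mH-\om\grad\mI$,
\[
d\mE_N(v)\tilde v=\la \mF(\om(v);u(v)),\,du(v)\tilde v\ra_{(L^2)^2},\qquad du(v)\tilde v=\tilde v+d\{w(\om(v);v)\}\tilde v .
\]
Because the graph $(\om(v),v)$ lies in $\mU_{\e_0}$, the range equation \eqref{zeros.implicit.func.} gives $\Pi_{R^{\mathfrak{s},s}}\mF(\om(v);u(v))=0$, so $\mF(\om(v);u(v))\in Z=V$; as $d\{w\}\tilde v\in W^{\mathfrak{s},s}\subset W=V^{\perp}$, that summand drops out and $d\mE_N(v)\tilde v=\la\mF(\om(v);u(v)),\tilde v\ra_{(L^2)^2}$ for all $\tilde v\in V_N$. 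Since $Z_N=V_N$, this is precisely \eqref{grad.mE.V}.

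Part $(iii)$ is the heart of the matter. If $v\in\mS_N(a)$ is a constrained critical point, the normal-space description \eqref{normal.space} yields $\lambda\in\R$ with $\grad\mE_N(v)=\lambda\grad\mI_N(v)$, where $\grad\mI_N(v)\neq0$ for $a\in(0,a_0)$. By $(ii)$ and \eqref{function.for.om(v)}, $\grad\mE_N(v)=f(\om(v);v)$, so the defining relation \eqref{IFT.om} of $\om(v)$ gives
\[
0=F(\om(v);v)=\la f(\om(v);v),g(\om(v);v)\ra_{(L^2)^2}=\lambda\,\la\grad\mI_N(v),g(\om(v);v)\ra_{(L^2)^2}.
\]
To force $\lambda=0$ I would show the remaining pairing is nonzero for small $v\neq0$. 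Using $\grad\mI_N(v)=2\om_*\ell_*(1+\om_*^2)^{-1}v+O(|v|^2)$ from \eqref{IZ.approx}--\eqref{def.mI.0}, the expansion $g(\om(v);v)=\grad\mI(u(v))=J_0\pa_\th v+O(|v|^2)$ from $d(\grad\mI)(0)=J_0\pa_\th$ and $u(v)=v+O(|v|^2)$, and $\la v,J_0\pa_\th v\ra_{(L^2)^2}=2\mI_0(v)=2\om_*\ell_*(1+\om_*^2)^{-1}|v|^2$, one finds
\[
\la\grad\mI_N(v),g(\om(v);v)\ra_{(L^2)^2}=4\om_*^2\ell_*^2(1+\om_*^2)^{-2}|v|^2+O(|v|^3),
\]
which is strictly positive for $0<|v|$ small (recall $\om_*\neq0$). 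Hence $\lambda=0$, so $\grad\mE_N(v)=0$, which by $(ii)$ is exactly $\Pi_{Z_N}\mF(\om(v);u(v))=0$, i.e.\ \eqref{red.eq.}.

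Finally, for $(iv)$ I would invoke compactness: by Lemma \ref{constraint.top.desc.}, for $a\in(0,a_0)$ the set $\mS_N(a)$ is a compact connected analytic $1$-manifold avoiding the origin, on which $\mE_N$ is analytic by $(i)$, hence attains both a maximum and a minimum. These are constrained critical points, so by $(iii)$ they solve \eqref{red.eq.}; if the two extremal values differ they are attained at distinct points, and if they coincide $\mE_N$ is constant and every point of $\mS_N(a)$ solves \eqref{red.eq.}, so in all cases there are at least two solutions. The main obstacle is $(iii)$: the only available leverage to kill the Lagrange multiplier is the scalar identity $F(\om(v),v)=0$ defining $\om(v)$, and the argument works exactly because the leading quadratic part of $\la\grad\mI_N(v),g(\om(v);v)\ra$ is a nonzero multiple of $|v|^2$ — the nondegeneracy already recorded in \eqref{der.F} and \eqref{approx.I.Z.norm}.
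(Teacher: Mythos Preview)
Your proof is correct and follows essentially the same route as the paper: part $(i)$ by composition of the regularity of $\om(v)$, $\mH_N$, $\mI_N$; part $(ii)$ by differentiating $\mE_N$ through $u(v)$, killing the $d\om$ term on the constraint and the $d\{w\}$ term via the range equation and $W\perp Z$; part $(iii)$ by pairing the Lagrange-multiplier identity with $g(\om(v),v)$ and using $F(\om(v),v)=0$ together with the quadratic nondegeneracy of $\mI_0$; and part $(iv)$ by compactness of $\mS_N(a)$. The only cosmetic difference is that in $(iii)$ you expand $\grad\mI_N(v)$ and $g$ separately and compute the leading coefficient $4\om_*^2\ell_*^2(1+\om_*^2)^{-2}|v|^2$ explicitly, whereas the paper writes the same thing as $|\grad\mI_0(v)|^2 + O(|v|^3)$; the content is identical (and both tacitly use $\om_*\neq 0$, which is already built into Lemma~\ref{lemma:choice.of.omega}).
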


\begin{proof}
$(i)$ Straightforward consequence of Lemma \ref{lemma:regularity}.

$(ii)$ The differential of the functional $\mE_{N}$ 
at a point $v \in B_{V_N}(\e_1) \setminus \{ 0 \}$ 
in any direction $\tilde v \in V_N$ is 
\begin{align*}
& d\mE_{N}(v)\tilde v 
= d\mathcal{H}_{N}(v)v - d\omega(v)[\tilde v] \big( \mI_{N} (v) - a \big) 
- \om(v) d\mI_{N}(v)\tilde v 
\notag \\
& \quad 
= \la (\grad \mH)(u(v)) , du(v)\tilde v \ra_{(L^2)^2} 
- d\omega(v)[\tilde v] \big( \mI_{N}(v) - a \big)  
%\notag \\ & \quad \ 
- \om(v) \la (\grad \mI)(u(v)) , du(v)\tilde v \ra_{(L^2)^2} 
\notag \\
& \quad 
= \la \mF (\om(v), u(v)) , du(v)\tilde v \ra_{(L^2)^2}  
- d\omega(v)[\tilde v] \big( \mI_{N}(v) - a \big)  
\notag \\
& \quad 
= \la \mF (\om(v), u(v)) , \tilde v \ra_{(L^2)^2}  
+ \la \mF (\om(v), u(v)) , d\{ w(\om(v), v) \} \tilde v \ra_{(L^2)^2}  
- d\omega(v)[\tilde v] \big( \mI_{N} (v) - a \big)  
\notag \\
& \quad 
= \la \Pi_{Z_N} \mF (\om(v), u(v)) , \tilde v \ra_{(L^2)^2}  
+ \la \Pi_R \mF (\om(v), u(v)) , d\{ w(\om(v), v) \} \tilde v \ra_{(L^2)^2}  
\notag \\ 
& \quad \quad \ 
- d\omega(v)[\tilde v] \big( \mI_{N} (v) - a \big)  
\notag \\
& \quad 
= \la \Pi_{Z_N} \mF (\om(v), u(v)) , \tilde v \ra_{(L^2)^2}  
-d \omega(v)[\tilde v] \big( \mI_{N} (v) - a \big); 
\end{align*}
if we restrict to any $v\in\mS_N(a)$, then we get
\begin{equation*}d\mE_N(v)\tilde v=\la \Pi_{Z_N} \mF (\om(v), u(v)) , \tilde v \ra_{(L^2)^2},\end{equation*}
from which we have \eqref{grad.mE.V}.

$(iii)$ If $v\in\mS_N(a)$ is a constrained critical point for $\mE_N$ along the constraint $\mS_N(a)$, then there exists a constant $\lambda\in\R$ such that
\begin{equation*}\label{lagrange.multip.}\grad\mE_N(v)=\lambda\grad\mI_N(v);
\end{equation*}
Then, using \eqref{IFT.om}, \eqref{F.def.}, \eqref{function.for.om(v)}, 
the definition of $u(v)$ in \eqref{u.v.function},
\eqref{grad.mE.V}, and \eqref{lagrange.multip.}, 
we obtain
\begin{align} \label{0=lm.scal.prod}
0 & = F(\om(v), v) 
= \la \Pi_{Z_N} \mF (\om(v), u(v)) , (\grad \mI)(u(v)) \ra_{(L^2)^2}
= \lm \la \grad \mI_{N}(v) , (\grad \mI)(u(v)) \ra_{(L^2)^2}.
\end{align}
By \eqref{IZ.approx}, \eqref{Taylor.mI}, and \eqref{w.comp.estimate}, one has 
\[
\grad \mI_{N}(v) = \grad \mI_0(v) + O(|v|^2), 
\quad 
(\grad \mI)(u(v)) = \grad \mI_0(v) + O(|v|^2),
\]
and % therefore, taking smaller $\e_1, a_0$ if necessary, 
\[
\la \grad \mI_{N}(v) , (\grad \mI)(u(v)) \ra_{(L^2)^2} 
= |\grad \mI_0(v)|^2 + O(|v|^3) 
\geq \tfrac12 |\grad \mI_0(v)|^2 
> 0. 
\]
This means that the coefficient of $\lm$ in \eqref{0=lm.scal.prod} is nonzero, 
whence $\lm = 0$. Thus, by \eqref{lagrange.multip.}, $\grad \mE_{V_N}(v) = 0$, 
and since Lemma \ref{nondeg.space} holds, then \eqref{red.eq.} follows from \eqref{grad.mE.V}.

$(iv)$ It is enough to observe that by Lemma \ref{constraint.top.desc.}, the constraint $\mS_N(a)$ is a connected compact smooth manifold with no boundary, so any $C^1$ function over it attains at least one maximum and one minimum which must be critical points. 

\end{proof}

This would be enough to show that there are rotating waves. However, all the invariances and equivariances of functions, functionals and their gradients with respect to the torus action $\mT_\alpha$ provide us many more solutions, actually an orbit of solutions, which coincides with the constraint $\mS_N(a)$. We have indeed our final lemma:
\begin{lemma}
In the notations and assumptions above, the following facts hold.

$(i)$ Given an integer number $k\in\N$, let $A$ be an open $\T^1$-invariant neighborhood of the sphere $\S^{2k-1}$ and
let $\ph \in C^1(A, \R)$ be a $\T^1$-invariant function. 
Then there exist at least $k$ $\T^1$-orbits of critical points of $\ph$ 
restricted to $\S^{2k-1}$.

$(ii)$ For any $a \in (0, a_0)$, the functional $\mE_{N}$ has exactly one orbit $\{ \mT_\alpha v : \alpha \in \T^1 \}$, 
of constrained critical points on the constraint $\mS_{N}(a)$, which coincides with $\mS_N(a)$ itself. As a result, the restricted Hamiltonian $\mH_N$ is constant along $\mS_N(a)$.
    
\end{lemma}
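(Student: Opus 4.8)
We prove the two statements in turn, and I indicate where the real work lies.

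\emph{Part $(i)$.} The plan is to reduce the equivariant count to an ordinary Lusternik--Schnirelmann count on the orbit space. Since $\S^{2k-1}\subset\C^k$ carries the standard Hopf $\T^1$-action, which is free, the orbit space is the complex projective space $\mathbb{CP}^{k-1}$. First I would check that a $\T^1$-invariant $\ph\in C^1(A,\R)$ descends to a $C^1$ functional $\bar\ph$ on $\mathbb{CP}^{k-1}$ whose critical points are exactly the images of the $\T^1$-orbits of critical points of $\ph|_{\S^{2k-1}}$; compactness of $\S^{2k-1}$ makes the Palais--Smale condition automatic. Then I would invoke the Lusternik--Schnirelmann theorem, so that $\bar\ph$ has at least $\mathrm{cat}(\mathbb{CP}^{k-1})$ critical points. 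As the cup-length of $\mathbb{CP}^{k-1}$ equals $k-1$, one has $\mathrm{cat}(\mathbb{CP}^{k-1})=k$, yielding at least $k$ distinct critical $\T^1$-orbits of $\ph|_{\S^{2k-1}}$; the Benci pseudo-index formulation of \cite{Benci, Fadell.Rabinowitz, Mahwin.Willem, Struwe} gives the same bound directly.

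\emph{Part $(ii)$, reduction and transitivity.} Here $V_N$ is $2$-dimensional, so the relevant sphere is $\S^{2k-1}$ with $k=1$. The plan is to transport the constrained problem to $\mathtt{S}(a)$ and then to use that the torus action is transitive. By Lemma \ref{constraint.top.desc.}, $\mS_{N}(a)=\psi(\mathtt{S}(a))$ with $\psi$ a $\T^1$-equivariant diffeomorphism, and $\tilde\mE:=\mE_{N}\circ\psi$ is $\T^1$-invariant by \eqref{mE.VN.group.action} and \eqref{psi.group.action}; applying part $(i)$ with $k=1$ to $\tilde\mE$ on the circle $\mathtt{S}(a)$ already yields at least one critical $\T^1$-orbit, transported back to one critical orbit of $\mE_{N}$ on $\mS_{N}(a)$. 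The decisive observation is that, by \eqref{torus.action.v.var}, $\mT_\alpha$ acts on $V_N$ as the planar rotation through the angle $\ell_*\alpha$ in the coordinates $(v_1,v_{-1})$; since $\ell_*\ge1$ is an integer, $\ell_*\alpha$ sweeps all of $\T^1$ as $\alpha$ does, so the action is transitive on $\mathtt{S}(a)$ and hence, through $\psi$, transitive on $\mS_{N}(a)$. Therefore $\mS_{N}(a)$ is a single $\T^1$-orbit.

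\emph{Part $(ii)$, conclusion.} Since $\mE_{N}$ is $\T^1$-invariant, it is constant along this orbit, i.e.\ constant on $\mS_{N}(a)$; then its restriction to $\mS_{N}(a)$ has vanishing differential at every point, so every $v\in\mS_{N}(a)$ is a constrained critical point, and the unique critical orbit furnished above coincides with $\mS_{N}(a)$ itself. To obtain the statement on $\mH_N$, I would use that on the constraint $\{\mI_{N}=a\}$ the definition \eqref{restricted.functionals} together with \eqref{lagrangian.functional} gives $\mE_{N}(v)=\mH_{N}(v)-\om(v)\big(\mI_{N}(v)-a\big)=\mH_{N}(v)$; as $\mE_{N}$ is constant on $\mS_{N}(a)$, so is $\mH_{N}$.

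\emph{Main obstacle.} The only genuinely delicate step is part $(i)$: one must justify the descent of an invariant $C^1$ functional to $\mathbb{CP}^{k-1}$ in the $C^1$ category and quote the sharp lower bound $\mathrm{cat}(\mathbb{CP}^{k-1})=k$ (equivalently, compute the Benci pseudo-index of $\S^{2k-1}$). In the application relevant here, namely part $(ii)$ with $k=1$, this bound is trivial, and the actual content is the transitivity-plus-invariance argument, which becomes elementary once the rotation formula \eqref{torus.action.v.var} is in hand; the degeneracy that the whole constraint is critical is then a consequence of the torus action being transitive on the two-dimensional nondegenerate space $V_N$.
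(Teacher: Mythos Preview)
Your proposal is correct and follows essentially the same route as the paper: part $(i)$ is simply cited there from \cite{Mahwin.Willem} (you sketch the Lusternik--Schnirelmann/Benci argument behind it), and part $(ii)$ proceeds by transporting the problem to $\mathtt S(a)$ via the equivariant diffeomorphism $\psi$, using \eqref{psi.group.action} and \eqref{mE.VN.group.action}, and applying $(i)$ with $k=1$. Your explicit transitivity argument via \eqref{torus.action.v.var} and your derivation of the constancy of $\mH_N$ from $\mE_N=\mH_N$ on the constraint actually spell out what the paper leaves implicit in its terse final sentence.
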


\begin{proof}
$(i)$ This is Lemma $6.10$, \cite{Mahwin.Willem}, Section $6.4$.

$(ii)$ Fix $a \in (0, a_0)$. 
By Lemma \ref{constraint.top.desc.}, one has $\mS_{N}(a) = \psi (\mathtt S(a) )$. 
Consider the function
\begin{equation}  \label{def.comp.mE.VN.psi}
f : B_{V_N}(\e_2) \to \R, \quad \ f(y) := \mE_{N} (\psi(y)). 
\end{equation}
The differential of $f$ at a point $y \in B_{V_N}(\e_2)$
in direction $\tilde y \in V_N$ is 
\[
df(y)\tilde y = d\mE_{N}(\psi(y))[ d\psi(y)\tilde y],
\]
and, for $y \in \mathtt S(a)$, 
$\tilde y$ is in the tangent space $T_y( \mathtt S(a) )$ 
iff $d\psi(y)\tilde y$ is in the tangent space $T_{\psi(y)} ( \mS_{N}(a) )$. 
Hence a point $v = \psi(y) \in \mS_{N}(a)$ 
is a constrained critical point of $\mE_{N}$ 
on the constraint $\mS_{N}(a)$ iff 
$y \in \mathtt S(a)$ is a constrained critical point of $f$ 
on the constraint $\mathtt S(a)$. 
By \eqref{psi.group.action} and \eqref{mE.VN.group.action}, one has 
\begin{equation} \label{f.group.action}
f \circ \mT_\alpha = f
\end{equation}
for all $\alpha \in \T^1$. 
Then, by point $(i)$, 
$f$ has one orbit of critical points on $\mathtt S(a)$, then it coincides with $\mathtt S(a)$ itself.

\end{proof}

\begin{proof}[Proof of Theorem \ref{thm:main}] It is a consequence of what shown in Sections 3.2 and 3.3.
\end{proof}

\subsection{Existence of symmetric rotating waves}

In this section, we recover the result in \cite{Moon.Wu}, but in the setting we have built up to now.

Let us recall the reversibility operator $\mR$ defined in \eqref{revers.op.}, and let us recall the subspace of $L^2\times L^2$ of its fixed points:
\begin{align*}E:&=\{u=(\eta,\beta)\in(L^2)^2\colon\,\mR u=u\}
\\&=\{u=(\eta,\beta)\in(L^2)^2\colon\,(\eta(\th),\beta(\th))=(\eta(-\th),-\beta(-\th)),\,\forall\th\in\T^1\},
\end{align*}
which is made of the $u$ such that $\eta$ is even and $\beta$ is odd.
In terms of Fourier expansion, $E$ can be characterized as follows:
\begin{equation}E=\{u=(\eta,\beta)\in(L^2)^2\colon\,\eta_{\ell,-1}=\beta_{\ell,-1}=0,\,\forall\ell\in\N,\,\beta_{0,0}=0\}.
\end{equation}
By \eqref{F.equiv.}, it follows that $E$ is an invariant subspace for $\mF$, that is,
\begin{equation}\label{E.F.inv.}\mF((\R\times E)\cap U)\subset\R\times E.
\end{equation}
Let us have a look to the linearized operator $L_{\om}$, see \eqref{lin.op.}. If we define $V_E:=V\cap E$ and $W_E:=W\cap E$, see \eqref{kernel.L.} and \eqref{W}, then
\begin{equation}\label{V.E}V_E=\{\lambda\mathtt v_{1}\colon\,\lambda\in\R\},\qquad W_E=V_E^{\perp_E}
\end{equation}
where $\mathtt v_1$ is defined in \eqref{nondeg.space}; one can also check that
\begin{align}&V_E\oplus W_E=E,\qquad\qquad R_E\oplus Z_E=E, \label{lyap.symm.dec}
\\&R_E=R\cap E=W_E,\qquad Z_E=V_E, \label{range.symm.dec}
\end{align}
and, calling $R_E^{\mathfrak{s},s}:=R_E\cap R^{\mathfrak{s},s}$ and $W_E^{\mathfrak{s},s}:=W_E\cap W^{\mathfrak{s},s}$ (see \eqref{lyap.schm.dec.} and \eqref{range.dec}), one can show that Lemmas \ref{L.invertibility} and \ref{Range.eq.solv.} still hold, apart from the torus action equivariance.

Then, we find an open set $\mU_{\e_0}\subset\R\times V_E$ where $w=w(\om;v)\in W_E^{\mathfrak{s},s}$, which satisfies \eqref{zeros.implicit.func.}-\eqref{w.est.}.

The existence of nonzero symmetric rotating waves would be then a consequence of the following transversality lemma:

\begin{lemma}\label{transv.lemma} 
The following facts hold.

$(i)$ In the notations and assumptions made above, it holds that
\begin{equation}\label{transv.}\pa_\th\mathtt v_1\notin R,
\end{equation}
where $R$ is defined in \eqref{R}.

$(ii)$(Crandall-Rabinowitz Bifurcation Theorem) Let $X,Y$ two Banach spaces, let $A\subseteq X$ be an open subset containing $0$, and let us suppose $F\colon\,\R\times A\longmapsto Y$ to be analytic and such that for all $\lambda\in\R$, $F(\lambda,0)=0$. Let $\lambda_*$ be such that $L:=\pa_u F(\lambda_*,0)$ is such that $V=\ker L$ is $1-$dimensional, $X=V\oplus W$ and $R=L(W)$ is closed and has codimension $1$. Let us suppose also that $\pa_{\lambda u}^2F(\lambda_*,0)[\mathtt u]\notin R$, where $\mathtt u$ is the generator of $V$.
Then, $\lambda_*$ is a bifurcation point for $F$, and the set of nontrivial solutions close to $(\lambda_*,0)$ is an unique analytique parametrized over $V$.

\end{lemma}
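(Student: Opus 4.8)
The plan is to treat the two parts in opposite order of difficulty. Part $(ii)$ is nothing but the classical Crandall--Rabinowitz bifurcation theorem, so I would not reprove it from scratch: the proof is the standard Lyapunov--Schmidt reduction onto the one-dimensional kernel $V$, followed by the analytic implicit function theorem applied to the resulting scalar bifurcation equation; the transversality hypothesis $\pa_{\lm u}^2 F(\lm_*,0)[\mathtt u]\notin R$ is exactly what forces the $\lm$-derivative of the reduced equation to be nonzero at the bifurcation point, so that the set of nontrivial solutions is a single analytic curve through $(\lm_*,0)$. I would simply invoke \cite{Crandall.Rabinowitz}. Thus the only problem-specific content is part $(i)$, and this is a two-mode Fourier computation.

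For part $(i)$, first I would differentiate the generator $\mathtt v_1$. By \eqref{Fourier.basis} one has $\ph_{\ell_*,1}' = -\ell_*\ph_{\ell_*,-1}$ and $\ph_{\ell_*,-1}' = \ell_*\ph_{\ell_*,1}$, so differentiating the definition in \eqref{nondeg.space} gives
\begin{equation*}
\pa_\th \mathtt v_1
= (1+\om_*^2)^{-\frac12}\begin{pmatrix}\ph_{\ell_*,1}' \\ -\om_*\ph_{\ell_*,-1}'\end{pmatrix}
= \ell_*(1+\om_*^2)^{-\frac12}\begin{pmatrix}-\ph_{\ell_*,-1} \\ -\om_*\ph_{\ell_*,1}\end{pmatrix}
= -\ell_*\,\mathtt v_{-1}.
\end{equation*}
Hence $\pa_\th\mathtt v_1$ is a nonzero scalar multiple of the second kernel generator $\mathtt v_{-1}$, so it lies in $V\setminus\{0\}$, with $V$ the kernel of $\mL_{\om_*}$ described in \eqref{kernel.L.}. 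Since the $L^2\times L^2$-orthogonal complement of the range is $Z=V$ and $L^2\times L^2=R\oplus Z$ by \eqref{range.split}, one has $R\cap V=\{0\}$, and therefore $\pa_\th\mathtt v_1\notin R$, which is \eqref{transv.}. As a sanity check one can equivalently match Fourier coefficients directly against the three generators of $R$ on the $(\ell_*,\pm1)$ modes listed in \eqref{R} and verify that no linear combination reproduces $\pa_\th\mathtt v_1$.

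Finally I would record how $(i)$ feeds the application of $(ii)$ in the reversible setting, since this is where the only care is needed. Restricting $\mF$ to $E$, the kernel $V_E=\mathrm{span}\{\mathtt v_1\}$ is one-dimensional, and Lemma \ref{L.invertibility} (restricted to $E$) gives that $R_E=\mL_{\om_*}(W_E)$ is closed of codimension one; the transversality hypothesis then reads $\pa_{\om u}\mF(\om_*,0)[\mathtt v_1]\notin R_E$. Using the definition \eqref{def.mF} one computes $\pa_{\om u}\mF(\om_*,0)=-J_0\pa_\th$, so the condition becomes $-J_0\pa_\th\mathtt v_1\notin R_E$; the same bookkeeping as above, together with $\la -J_0\pa_\th\mathtt v_1,\mathtt v_1\ra_{(L^2)^2}=-2\ell_*\om_*(1+\om_*^2)^{-1}\neq 0$ for $\ell_*\ge 2$, yields it. The main (and really the only) obstacle is therefore not analytic but bookkeeping: one must identify the mixed derivative $\pa_{\om u}\mF$ correctly and keep track of the factor $J_0$, and one must use $\om_*\neq 0$, i.e. $\ell_*\ge 2$, to ensure the transversal direction has a nonzero component along $V_E$. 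Everything reduces to explicit computations on the two Fourier modes $\ph_{\ell_*,\pm1}$.
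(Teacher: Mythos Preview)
Your proof is correct and follows essentially the same route as the paper. For part $(i)$ both you and the paper compute $\pa_\th\mathtt v_1$ explicitly, observe it lies in $V=Z$, and conclude from $R\cap V=\{0\}$; you phrase the result as $\pa_\th\mathtt v_1=-\ell_*\mathtt v_{-1}$, the paper leaves it in component form, but the content is identical. For part $(ii)$ both treatments simply invoke the classical theorem by reference (the paper cites \cite{Ambrosetti.Prodi}, you cite \cite{Crandall.Rabinowitz}).

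Your final paragraph, where you track the actual transversality condition $\pa_{\om u}\mF(\om_*,0)[\mathtt v_1]=-J_0\pa_\th\mathtt v_1\notin R_E$ and verify it via the pairing $\la -J_0\pa_\th\mathtt v_1,\mathtt v_1\ra_{(L^2)^2}=-2\ell_*\om_*(1+\om_*^2)^{-1}\neq 0$, goes beyond what the lemma literally asks but is a welcome clarification: the paper's proof of Theorem~\ref{thm:bif}$(i)$ just says ``It follows by Lemma~\ref{transv.lemma}'' without spelling out that the relevant mixed derivative for $\mF$ is $-J_0\pa_\th$ rather than $\pa_\th$, and your observation that $\om_*\neq 0$ (i.e.\ $\ell_*\ge 2$) is needed here is a genuine point. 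One small slip: you refer to \eqref{def.mF} for the computation of $\pa_{\om u}\mF$, but \eqref{def.mF} defines $\mF_0$; the operator you differentiate is $\mF$ from \eqref{F.op.}--\eqref{F.2}, and your result $-J_0\pa_\th$ is correct for that $\mF$.
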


\begin{proof}
$(i)$ Since 
\begin{equation*}\mathtt v_1(\th)=\begin{pmatrix}\ph_{\ell_*,1}(\th) \\ -\om_*\ph_{\ell_*,-1}(\th)
\end{pmatrix} = \begin{pmatrix}\cos(\ell_*\th) \\ -\om_*\sin(\ell_*,\th)
\end{pmatrix},
\end{equation*}
then
\begin{equation*}\pa_\th\mathtt v_1=-\ell_*\begin{pmatrix}\sin(\ell_*\th) \\ \om_*\cos(\ell_*\th)
\end{pmatrix} = -\ell_*\begin{pmatrix}\ph_{\ell_*,-1} \\ \om_*\ph_{\ell_*,1}
\end{pmatrix}\in V=Z.
\end{equation*}
$(ii)$ See \cite{Ambrosetti.Prodi}, Theorem 4.1, Chapter 5.
    
\end{proof}

\begin{proof}[Proof of Theorem \ref{thm:bif}, point $(i)$]
It follows by Lemma \ref{transv.lemma}, where in point $(ii)$ we set $A:=E\cap U$, $X:=H^{\mathfrak{s},s+\frac32}\times H^{\mathfrak{s},s+1}$, $Y:=E\cap(H^{\mathfrak{s},s-\frac12}\times H^{\mathfrak{s},s})$, $F:=\mF$, $\lambda:=\om$, $\lambda_*:=\om_*$, $V:=V_E$, $W:=W_E$, $R:=R_E$ and $\mathtt u:=\mathtt v_1$.
\end{proof}

Now, we want to deal with the $c-$fold symmetry, with $c\in\N$ such that $c\ge2$. To this term, we recall
\begin{align*}L_c^2
&=\Big\{f\in L^2\colon\,f\Big(\th+\frac{2\pi}{c}\Big)=f(\th),\quad\forall\th\in\T^1\Big\}
\\&=\Big\{f\in L^2\colon\,f=\sum_{(\ell,m)\in\mT}f_{c\ell,m}\ph_{c\ell,m} \Big\}   \notag
\end{align*}
We notice that $L_c^2\times L_c^2$ is an invariant subspace for $\mF$. One can then restrict $\mF$ to $\R\times(U\cap(L_c^2\times L_c^2))$.

\begin{proof}[Proof of Theorem \ref{thm:bif}, points $(ii)$ and $(iii)$] 

Let us fix any $\ell_*\in\N$, and let us consider all the frequencies
\begin{equation}\label{freq.c.fold}\om_*:=\sqrt{\s_0}\cdot\sqrt{\frac{(c\ell_*)^2 - 1}{c\ell_*}}.
\end{equation}
Point $(ii)$ follows by repeating the same analysis done in Sections 4.2, 4.3; point $(iii)$ follows by repeating the computations done for proving point $(i)$.

\end{proof}

We conclude this section with the following remark. In Theorem \ref{thm:main}, we proved that for each (small enough) value $a$ of the angular momentum, we have a unique $\mT_\alpha-$orbit of rotating waves. In this orbit, one can also find the one of Theorem \ref{thm:bif}, point $(i)$, with the reversibility symmetry: as a result, we deduce that indeed, such an orbit is generated by the latter rotating wave, and so that is the only one that can be found up to translations of the argument. The same holds when it comes to the $c-$fold symmetry.

\begin{flushright}

\textbf{Giuseppe La Scala}

Mathematical and Physical Sciences for Advanced Materials and Technologies

Scuola Superiore Meridionale

Via Mezzocannone, 4, 80138 Naples, Italy

giuseppe.lascala-ssm@unina.it

\end{flushright}
\end{document}